\documentclass[12pt]{amsart}
\usepackage{amssymb}
\usepackage[dvips]{graphicx}
\usepackage[latin1]{inputenc}
\usepackage{young}
\usepackage{wasysym}
\usepackage[usenames]{color}
\headheight=8pt
\topmargin=30pt
\textheight=611pt     \textwidth=456pt
\oddsidemargin=6pt   \evensidemargin=6pt
\numberwithin{equation}{section}
\newtheorem{thm}{Theorem}
\newtheorem{prop}[thm]{Proposition}
\newtheorem{lemma}[thm]{Lemma}
\newtheorem{cor}[thm]{Corollary}

\theoremstyle{definition}
\newtheorem{example}[thm]{Example}
\newtheorem{remark}[thm]{Remark}
\newtheorem{openproblem1}[thm]{Open Problem}
\newtheorem{definition}[thm]{Definition}
\newtheorem{assum}[thm]{Assumption}
\newenvironment{ex}{\begin{example}\rm}{\end{example}}

\numberwithin{thm}{section}
\newcommand{\p}{{\rm P}}
\def\l{{\rm L}}
\def\u{{\rm U}}
\numberwithin{thm}{section}
\newcounter{FNC}[page]
\def\newfootnote#1{{\addtocounter{FNC}{2}$^\fnsymbol{FNC}$%
    \let\thefootnote\relax\footnotetext{$^\fnsymbol{FNC}$#1}}}
\newcommand{\C}{\mathbb{C}}
\newcommand{\K}{\mathbb{K}}
\newcommand{\N}{\mathbb{N}}

\newcommand{\CC}{\mathcal{C}}
\newcommand{\R}{\mathbb{R}}

\newcommand{\Z}{\mathbb{Z}}


\definecolor{RED}{rgb}{0.6,0,0}

\DeclareMathOperator{\id}{\mathrm{id}}

\newcommand{\atopfrac}[2]{\genfrac{}{}{0pt}{}{#1}{#2}}

\DeclareMathOperator{\Gl}{GL}

\DeclareMathOperator{\sgn}{sgn}

\DeclareMathOperator{\wt}{wt}
\DeclareMathOperator{\Van}{Van}
\DeclareMathOperator{\CStab}{CStab}

\DeclareMathOperator{\myspan}{span}

\author{Cordian Riener}
\address{Fachbereich Mathematik und Statistik,
Universit\"at Konstanz, 78457 Konstanz, Germany.}
\email{cordian.riener@uni-konstanz.de}
\author{Thorsten Theobald}
\address{FB 12 -- Institut f\"ur Mathematik,
Goethe-Universit\"at,
Postfach 11 19 32, 60054 Frankfurt am Main, Germany.}
\email{theobald@math.uni-frankfurt.de}
\author{Lina Jansson Andr\'en}
\address{
Dept.\ of Mathematics and Math.\ Statistics, Ume\aa{} universitet,
901 87 Ume\aa{}, Sweden.}
\email{lina.andren@math.umu.se}
\author{Jean B.\ Lasserre}
\address{LAAS-CNRS and Institute of Mathematics, LAAS 7 Avenue du Colonel Roche, 31077 Toulouse Cedex 4, France.}
\email{lasserre@laas.fr}

\title{Exploiting symmetries in SDP-relaxations for polynomial optimization}
\thanks{An earlier preprint version of this paper already received attention and
is referenced e.g. in the surveys \cite{bgsv-2010,laurent-2009,vallentin-2007}.\\
\emph{2010 Mathematics Subject Classification:} 90C22, 90C26, 14P05, 05E10.
}

\begin{document}

\begin{abstract}
In this paper we study various approaches for
exploiting symmetries in polynomial optimization problems
within the framework of semidefinite programming relaxations.
Our special focus is on constrained problems especially when
the symmetric group is acting on the variables.
In particular, we investigate the concept of block decomposition within
the framework of constrained polynomial optimization problems,
show how the degree principle for the symmetric group can be
computationally exploited and also propose some methods to efficiently
compute in the geometric quotient.
\end{abstract}

\maketitle
\section{Introduction}

Solving or even computing lower bounds in constrained polynomial optimization
is a difficult problem with important practical applications.
In recent years, results of real algebraic geometry on the representation of positive polynomials
have permitted to define a hierarchy of semidefinite relaxations (SDP-relaxations) of these problems,
which provide a monotonically nondecreasing sequence of lower bounds converging to the global minimum.
See e.g.\ Lasserre \cite{lasserre-2001}, Parrilo \cite{parrilo-2001} or the survey \cite{laurent-2009} and many references therein. However, the size of the resulting SDPs grows fast with the problem size; typically, for an optimization problem in $n$ variables the SDP-relaxation of order $k$ in the hierarchy involves $O(n^{2k})$ variables and linear matrix inequalities (LMIs) of size $O(n^k)$. Therefore, and in view of the present status of SDP solvers, the applicability of the basic methodology is limited to small or medium size problems unless some specific characteristics are taken into account.

One way to reduce this size limitation is to exploit \emph{symmetries} when present in the problem
definition. In the present paper, which has a foundational character,
we consider the polynomial optimization problem
\begin{equation}
\label{eq:opt1}
 \begin{array}{rcll}
 f^* & = & \inf f(x) \\
 & & \text{s.t. } g_1(x) \ \ge \ 0, \ldots, g_m(x) \ \ge \ 0 \, ,
 \end{array}
\end{equation}
where $f,g_1, \ldots, g_m \in \R[X_1, \ldots, X_n]$.
We assume that the polynomials are invariant by the action of a finite subgroup $G$ of the group $\Gl_n(\R)$,
i.e., $f(\sigma^{-1}(x)) = f(x)$
and $g_j(\sigma^{-1}(x)) = g_j(x)$ for all $\sigma \in G$, and all $j=1, \ldots,m$.

A major theoretical contribution to a systematic study of symmetries in real algebraic
geometry was provided by Procesi and Schwarz \cite{proschwa} who gave a semi-algebraic
description of the geometric quotient of a semi-algebraic set invariant under
a group $G$ \cite{broecker}. For the special case of the symmetric group,
Timofte \cite{timofte-2003} provided a very useful criterion for the
non-negativity of a polynomial.
Cimpri\v{c}, Kuhlmann, and Scheiderer studied foundational aspects of the dual
problem of moments \cite{cimpric}.

The systematic study of block diagonalizations of SDPs was initiated by
Gatermann and Parrilo \cite{gatermann-parrilo-2004} (in the context of symmetries) and by
Schrijver \cite{Schrijver2,Schrijver} (in the general framework of matrix $*$-algebras).
Building on this, de Klerk, Pasechnik, and Schrijver \cite{dps-2006} have provided a general
method (the $*$-representation) to handle symmetries of any semidefinite program (see
also \cite{gijswijt-2005,komk-2001,laurent-2006}).
Excellent reviews of the above are the surveys
\cite{bgsv-2010,vallentin-2007}.

\smallskip

$*$ {\bf Contribution:} In the present paper, we advance these lines of research in several ways:

1. We provide a systematic treatment of the block diagonalization in
the setting of Lasserre's relaxation which is concerned with \emph{constrained}
optimization. Instead of considering
a general SDP framework, we rather focus attention on
the specific SDPs coming from the relaxation scheme defined in
\cite{lasserre-2001}. Indeed,
the symmetries on the original variables of the optimization problem
induce specific additional symmetry structure on the moment and
localizing matrices of the SDP-relaxation.
To this end we suggest that a symmetry-adapted version of the relaxation scheme
can be defined directly using an appropriate basis for the moments and derive symmetric versions of Putinar's Theorems (see Theorems \ref{eq:SymMom} and \ref{thm:symPutinar}).
We study a possible basis (generalized Specht polynomials as
defined in Section~\ref{se:momentmatrixsymmetric})
in detail for the case of the symmetric group
$\mathcal{S}_n$.
In this situation we show that for $k$ fixed,
the number and sizes of the LMIs in the SDP-relaxation of order $k$ are
bounded by a constant that does {\it not}
depend on the number $n$ of variables (Theorem~\ref{th:constantasym}).
As a direct consequence, we can state some symmetric versions of
representation theorems
for sums of squares, in particular for the ``Hilbert cases''
(Theorem \ref{thm:darst} and corollaries).

2. We show how the so-called degree principle (\cite{Rie,timofte-2003}) can be used to transform an $\mathcal{S}_n$-invariant optimization problem into a set of lower dimensional problems and that in some cases the
resulting relaxation scheme converges finitely (Theorem~\ref{th:finiteconvergence}).
This gives a sum of squares based criterion to
certify non-negativity of an $\mathcal{S}_n$-symmetric polynomial of degree~4
(Theorem~\ref{thm:degfour}).

3. We show how the geometric quotient viewpoint naturally leads to a Polynomial Matrix Inequality (PMI) problem. For certain power sum problems
(generalizing a situation studied by Brandenberg and
Theobald \cite{brandenberg-theobald-2006}), we discuss how this leads
to lower and upper bounds which can be computed quite simply (Theorems~\ref{lower1}
and~\ref{upper}).

\smallskip

Our techniques enlarge the techniques for handling constrained optimization
problems with symmetries. We feel that it is worth to present them in a
common context. We focus to a large extent on the case of the
group $\mathcal{S}_n$.  This has several reasons.
Firstly the problems that motivated the research leading to this paper came from this setting.
Secondly it turns out that in the situation of symmetric polynomials the complexity of the optimization problem as a function of the number of variables can be dramatically reduced with all the techniques we provide.
Moreover, the symmetric group serves as a rich example to demonstrate
general principles, and we remark that many combinatorial optimization
problems can be put into the form of maximizing a given linear
form on the orbit of a vector in a representation of the symmetric group
(see \cite{barvinok-vershik-89,barvinok-92}).
Whereas (on the SDP level) the general framework of block diagonalization is already well understood the degree principle still awaits its generalization for other groups.

The authors are aware that certain algebraic techniques used in this paper might not
be very familiar to optimizers in general, which may induce
doubts about any real systematic implementation
in some fully automized software, at least in some near future.
However, there are also reasons to be more optimistic
in view of the growing interest in semidefinite relaxations for polynomial optimization,
and their current limitation to problems of modest size only, if no sparsity or symmetry is
not taken into account.

\medskip

The paper is structured as follows. In Section~\ref{se:pre}, we
give a short introduction to the SDP relaxation scheme and to
PMIs.
Furthermore we introduce some representation theoretical notions,
with special focus on the symmetric group $\mathcal{S}_n$.
In Section \ref{se:symadapt} we give a systematic treatment of how invariance by a finite group can be exploited in the relaxation scheme introduced in \cite{lasserre-2001}.
Section \ref{se:sympol} is devoted to a study of optimization with symmetric polynomials. We give a detailed construction of the related moment matrices. From the constructions we then deduce representation statements for symmetric positive polynomials. 
In Section~\ref{se:timofte} we show how it is possible to use the degree principle to break some of the symmetry and construct thereby a family of lower
dimensional problems, which can be used to solve the original optimization
problem.

Finally, in Section~\ref{se:pmirelaxations}
we show how optimization problems described by invariant polynomials
can be treated in the orbit space.
As a direct application of this procedure we can show how to calculate bounds
for a specific class of problems.

\section{Preliminaries}\label{se:pre}
Let $\R[X]$ be the ring of polynomials in the variables $X=(X_1,\ldots,X_n)$
and $\R[X]_{\le k}$ be the subset of polynomials of degree at most $k$.
In the following subsections, we recall Lasserre's relaxation
scheme for polynomial optimization, polynomial matrix inequalities (PMIs)
and some basic concepts of representation theory.

\subsection{Lasserre's method\label{se:lasserre}}
Given polynomials $f, g_1, \ldots, g_m  \in \R[X]$, consider the general
optimization problem of the form
\[
f^* \ = \ \inf {f(x)} \  \text{ subject to } \ g_1(x)\geq 0,\ldots, g_m(x) \geq 0 \, .
\]
Its feasible set $K\subseteq\R^n$ is the basic closed semi algebraic set
\begin{equation}\label{setk}
K \ := \ \{x\in\R^n\::\: g_j(x)\geq0, \; j=1,\ldots,m\}.
\end{equation}

In \cite{lasserre-2001} Lasserre has introduced the following hierarchy of
semidefinite relaxations (see also \cite{lassere-book,laurent-2009}).
For reasons described below we will need the following technical assumption:
\begin{assum}
\label{putinar-assumption}
The feasible set $K$ defined in (\ref{setk})
is compact and there exists a polynomial $u\in\R[X]$ such that
the level set $\{x\in\R^n\::\:u(x)\geq0\}$ is compact and $u$ has the
representation
\begin{equation}\label{put}
u \ = \ u_0+\sum_{j=1}^m u_j\,g_j
\end{equation}
for some sums of squares polynomials $u_0,u_1, \ldots, u_m \in \R[X]$.
\end{assum}
Assumption \ref{putinar-assumption} holds if e.g.\ for some
$j\in\{1,\ldots,m\}$ the level set
$\{x\in\R^n\::\:g_j(x)\geq 0\}$ is compact, or if
$K$ is compact and all the $g_j$'s are affine (in which case $K$ is a
polytope).
In particular, Assumption \ref{putinar-assumption} holds if and only if for some
$N\in\N$, the polynomial
$N - \sum_{i=1}^n X_i^2$ can be written in the form
(\ref{put});
equivalently, this polynomial belongs to the quadratic module generated by the
$g_j$'s. For a comprehensive discussion of the
last condition see~\cite[Theorem 1]{schweighofer-2005}.
Notice that under Assumption \ref{putinar-assumption}, $K$ is compact and thus 
the infimum $f^*$ is attained on $K$.

The idea is to convexify the problem by considering the equivalent formulation
\begin{equation}
\label{eq:moment1}
f^* \ = \ \min_{x \in K} f(x) \ = \ \min_{\mu \in \mathcal{P}(K)}
\int f \, d\mu \, ,
\end{equation}
where $\mathcal{P}(K)$ denotes the set of all probability measures
$\mu$ supported on the set $K$. These measures are characterized
by the following statement due to Putinar.

\begin{thm}[Putinar \cite{put}]\label{th:putinar}
Suppose Assumption~\ref{putinar-assumption} holds for the set $K$. A linear map $L:\,\R[X]\to \R$ is the
integration with respect to a probability measure $\mu$ on $K$, i.e.,
$$\exists\mu \in \mathcal{P}(K) \quad \forall p\in\R[X] \quad \,L(p) \ = \ \int p \, d\mu \, ,$$ if and only if $L(1)=1$
and $L(s_0+ \sum_{j=1}^m s_j g_j) \geq 0$ for any 
sum of squares polynomials $s_0, \ldots, s_m \in \R[X]$.
\end{thm}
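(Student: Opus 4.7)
The plan is to prove the two directions separately. The forward direction exploits only the pointwise positivity of the admissible test polynomials on $K$; the converse is the nontrivial one and is obtained by a Gelfand--Naimark--Segal style construction combined with the spectral theorem for commuting bounded self-adjoint operators. First, assume $L(p) = \int p\,d\mu$ for some $\mu \in \mathcal{P}(K)$. Then $L(1) = \mu(K) = 1$, and any polynomial $s_0 + \sum_{j=1}^m s_j g_j$ with sums of squares $s_j$ is non-negative on $K$, hence integrates to a non-negative value against $\mu$.

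For the converse, set $\langle p,q\rangle := L(pq)$; taking $s_0 = p^2$ and $s_1 = \cdots = s_m = 0$ in the hypothesis gives $L(p^2)\geq 0$, so $\langle \cdot,\cdot\rangle$ is positive semidefinite. Let $\mathcal{I} := \{p \in \R[X] : L(p^2) = 0\}$, write $[p]$ for the class of $p$ in $\R[X]/\mathcal{I}$, and complete this quotient to a real Hilbert space $H$. The crucial step is to show that the multiplication maps $M_i : [p] \mapsto [X_i p]$ extend to commuting bounded self-adjoint operators on $H$. Here Assumption~\ref{putinar-assumption} enters decisively: the quadratic module $M = \{s_0 + \sum_j s_j g_j : s_j \text{ SOS}\}$ is Archimedean, so it contains $N - \sum_i X_i^2$ for some $N \in \N$, and the hypothesis $L|_M \geq 0$ then yields the bound $\sum_i \|M_i[p]\|^2 \le N\|[p]\|^2$. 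The same bound, combined with the nonnegativity $L(X_i^2 p^2)\ge 0$, forces $\mathcal{I}$ to be invariant under multiplication by $X_i$ (if $L(p^2)=0$ then $0\le L(X_i^2p^2)\le NL(p^2)=0$), so each $M_i$ is well-defined; symmetry of the inner product makes them self-adjoint.

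Applying the joint spectral theorem to the commuting family $\{M_1,\ldots,M_n\}$ yields a projection-valued measure $E$ on $\R^n$, supported on their joint spectrum. Define $\mu(A) := \langle E(A)[1], [1]\rangle$; using $L(1) = 1$, this is a Borel probability measure, and the continuous functional calculus gives $L(p) = \int p\, d\mu$ for every $p \in \R[X]$. To conclude that $\mathrm{supp}\,\mu \subseteq K$, I would use that for each $j$ and each SOS polynomial $s$ the hypothesis yields $\int s g_j \, d\mu = L(s g_j) \geq 0$; choosing SOS polynomials $s$ that concentrate on a compact subset of $\{g_j \le -\varepsilon\}$ (which would make the integral strictly negative) forces $\mu(\{g_j < 0\}) = 0$, so $\mu$ is supported on $K$.

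The principal obstacle is to establish the boundedness of the multiplication operators $M_i$ on the GNS Hilbert space, and this is precisely the point at which the Archimedean assumption is indispensable. Without the existence of a polynomial $u$ as in (\ref{put}) dominating $\sum_i X_i^2$, one would obtain only unbounded symmetric operators, for which joint spectral theory is considerably more delicate and for which one cannot in general extract a compactly supported Borel measure representing $L$.
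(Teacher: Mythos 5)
The paper does not supply a proof of this theorem; it is stated as a direct citation of Putinar~\cite{put}, so there is no in-paper argument to compare against. Your outline follows the standard operator-theoretic route to Putinar's solution of the moment problem: the GNS construction from the positive functional $L$, the observation that the Archimedean element $N-\sum_i X_i^2$ of the quadratic module forces the multiplication operators to be bounded on the GNS Hilbert space (hence commuting bounded self-adjoint operators), and the joint spectral theorem producing a compactly supported representing measure. The forward direction, the well-definedness of $M_i$ on the quotient via $0\le L(X_i^2p^2)\le N\,L(p^2)$, the self-adjointness, and the norm bound $\sum_i\|M_i[p]\|^2\le N\|[p]\|^2$ are all correct.

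The one place you should tighten is the final support argument. As written, ``choosing SOS polynomials $s$ that concentrate on a compact subset of $\{g_j\le-\varepsilon\}$'' is informal: $\{g_j\le-\varepsilon\}$ need not be compact, the approximating polynomials must behave well on all of $\operatorname{supp}\mu$ (not merely on the bad set), and one has to verify that a polynomial square can be made to approximate a bump function sufficiently well without introducing positive contributions elsewhere. This can be patched (because $\operatorname{supp}\mu$ is compact, $\sqrt{\phi}$ can be approximated uniformly there by polynomials $p_k$, and then $L(p_k^2g_j)\to\int\phi\,g_j\,d\mu<0$), but the cleaner and standard route is to skip the approximation entirely: for each $j$, the operator $g_j(M_1,\ldots,M_n)$ is positive semidefinite since $\langle g_j(M)[p],[p]\rangle = L(g_j p^2)\ge 0$ for all $p$ by hypothesis, so its spectrum lies in $[0,\infty)$; by the spectral mapping theorem the joint spectrum of $(M_1,\ldots,M_n)$ is contained in $\{g_j\ge 0\}$, hence in $K$, and $\mu$ is automatically supported on $K$. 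I would replace the density argument with this direct operator inequality. With that repair, your proof is a complete and correct rendition of the standard argument behind the theorem the paper cites.
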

Setting $g_0 := 1$,
the condition in Putinar's result is satisfied if and only if 
the bilinear forms 
$\mathcal{L}_{g_{0}}, \ldots, \mathcal{L}_{g_{m}}$ defined by
\begin{eqnarray*}
\mathcal{L}_{g_{j}}:\R[X]\times \R[X]&\to&\R, \\
(p,q)&\mapsto&L(p\cdot q\cdot g_j)
\end{eqnarray*}
are positive semidefinite (psd).
With this characterization we can restate \eqref{eq:moment1} as
\begin{equation}\label{eq:p1}
f^{*}  \ = \ \min\left\{ L(f) \; : \; L:\R[X] \to \R \mbox{ linear},\, L(1)=1 \text{ and each } \mathcal{L}_{g_j} \text{ is  psd}\right\} \, .
\end{equation}
Now fix any basis $\mathcal{B}$ of the vector space $\R[X]$ with 
$1 \in \mathcal{B}$ (for example the monomial basis $X^\alpha$).
For any linear map $L:\R[X] \to \R$ with $L(1) = 1$, 
setting $y_{b} = L(b)$ for $b \in \mathcal{B}$ identifies $L$ with
an infinite series $y = (y_b)_{b \in \mathcal{B}}$ 
of real numbers indexed by the elements of~$\mathcal{B}$.
The infinite-dimensional \emph{moment matrix} $M$ associated to $y$ is 
indexed by $\mathcal{B}$ and given
by $$M(y)_{u,v} \ := \ L(u \cdot v) \, , \quad u, v \in \mathcal{B} \, .$$
Furthermore for each $g_j$ define in an analogous manner the
\emph{localizing matrix} $M(g_j\, y)$ by
$$M(g_j \, y)_{u,v} \ := \ L(u \cdot v \cdot g_j), \quad u, v \in \mathcal{B} \, .
$$
Under Assumption~\ref{putinar-assumption},
a given sequence $y$ comes from some measure $\mu$ supported on $K$ if and only if the 
moment matrix as well as the localizing matrices are psd.
For practical applications of this approach, truncated versions 
of~\eqref{eq:p1} have to be considered: 
Let $k\geq k_0 := \max  \{ \lceil \deg f/2 \rceil, \lceil \deg g_1/2
\rceil, \ldots, \lceil \deg g_m/2 \rceil \}$. Define
the finite-dimensional matrix $M_k$ by 
considering only rows and columns indexed by elements in 
$\mathcal{B}$ of degree at most~$k$,
and consider the hierarchy of semidefinite relaxations
\begin{equation}\label{relax}
 Q_k:\quad\begin{array}{rcl}
 \inf_y L(f) \\
 M_k(y) & \succeq & 0 \, , \\
 M_{k - \lceil \deg g_j / 2 \rceil}(g_j \, y) & \succeq & 0 \, , \quad
1 \le j \le m \, , \\
  y_1 & = & 1 \, ,
 \end{array}
\end{equation}
with optimal value denoted by $\inf Q_k$ (and $\min Q_k$ if the infimum
is attained).

Although each of the relaxation values might not be optimal
for the original problem, one has the following convergence result.
\begin{prop}[Lasserre \cite{lasserre-2001}] \label{pr:lasserreconv}
Let Assumption \ref{putinar-assumption} hold and consider the hierarchy
of SDP-relaxations $(Q_k)_{k \ge k_0}$ defined in (\ref{relax}).
Then the sequence $(\inf Q_k)_{k\geq k_0}$ is monotonically  non-decreasing and
converges to $f^*$; that is,
$\inf Q_k \uparrow f^*$ as $k\to\infty$.
\end{prop}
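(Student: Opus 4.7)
The plan is to split the statement into three pieces: monotonicity, the upper bound $\inf Q_k \le f^*$, and the lower bound $\liminf_k \inf Q_k \ge f^*$; combining these yields the claimed convergence from below.

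First I would verify monotonicity. Fix $k \ge k_0$ and let $y^{(k+1)}$ be feasible for $Q_{k+1}$. The truncation $y^{(k)}$ indexed by basis elements of degree at most $k$ (respectively $k - \lceil \deg g_j/2 \rceil$ for the localizing matrices) is simply a principal submatrix of $M_{k+1}(y^{(k+1)})$ and $M_{k+1-\lceil \deg g_j/2\rceil}(g_j\, y^{(k+1)})$, hence is psd, and the objective $L(f)$ only involves $y_b$ with $\deg b \le \deg f \le 2k_0 \le 2k$. Thus $y^{(k)}$ is feasible for $Q_k$ with the same objective value, giving $\inf Q_k \le \inf Q_{k+1}$.

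Next, the upper bound $\inf Q_k \le f^*$ comes from the Dirac measure at a minimizer. Under Assumption~\ref{putinar-assumption} the infimum $f^*$ is attained at some $x^* \in K$, and the sequence $y_b := b(x^*)$ satisfies $y_1 = 1$, $M_k(y) = v_k(x^*) v_k(x^*)^T \succeq 0$ and $M_{k-\lceil \deg g_j/2\rceil}(g_j \, y) = g_j(x^*) \, v(x^*) v(x^*)^T \succeq 0$ (since $g_j(x^*) \ge 0$), where $v_k$ is the vector of basis elements up to the relevant degree. This $y$ is feasible for $Q_k$ with $L(f) = f(x^*) = f^*$.

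The main step is the lower bound, and this is where the content of Putinar's theorem enters. Let $\varepsilon > 0$. The polynomial $f - f^* + \varepsilon$ is strictly positive on $K$, so by the sum-of-squares form of Putinar's Positivstellensatz (the SOS-dual of Theorem~\ref{th:putinar}, valid under Assumption~\ref{putinar-assumption}) there exist SOS polynomials $s_0, s_1, \ldots, s_m \in \R[X]$ with
\[
f - f^* + \varepsilon \ = \ s_0 + \sum_{j=1}^m s_j\, g_j \, .
\]
Fix $k$ large enough that $\deg s_0 \le 2k$ and $\deg (s_j g_j) \le 2k$ for all $j$. For any feasible $y$ of $Q_k$, writing each $s_j = \sum_\ell p_{j\ell}^2$ and letting $\mathbf{p}_{j\ell}$ denote the coefficient vector of $p_{j\ell}$, we obtain
\[
L(s_j g_j) \ = \ \sum_\ell \mathbf{p}_{j\ell}^T \, M_{k - \lceil \deg g_j/2 \rceil}(g_j\, y) \, \mathbf{p}_{j\ell} \ \ge \ 0 \, ,
\]
by psd-ness of the localizing (resp.\ moment, for $j=0$) matrix. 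Applying $L$ to the identity above and using $L(1) = 1$ yields $L(f) \ge f^* - \varepsilon$, whence $\inf Q_k \ge f^* - \varepsilon$. Together with monotonicity and the previous bound, $f^* - \varepsilon \le \inf Q_k \le f^*$ for all sufficiently large $k$, and letting $\varepsilon \downarrow 0$ gives $\inf Q_k \uparrow f^*$.

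The only delicate point is the invocation of the SOS-Positivstellensatz: the version of Putinar's theorem stated in the excerpt is the moment-theoretic formulation, and the argument above requires its dual statement, which follows from the same compactness hypothesis by a standard Hahn--Banach separation argument (or is taken as part of the Putinar package). Modulo this, every remaining step is elementary algebra on moment and localizing matrices.
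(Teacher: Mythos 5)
The paper does not supply its own proof of this proposition—it is quoted directly from Lasserre's 2001 paper—so there is no internal argument to compare against. Your reconstruction is the standard argument from that reference and is correct in all essentials: monotonicity by restricting a feasible moment sequence to lower degree, the upper bound $\inf Q_k \le f^*$ by evaluating moments at a minimizer $x^* \in K$ (which exists since Assumption~\ref{putinar-assumption} forces $K$ compact), and the lower bound by applying $L$ to a Putinar representation of $f - f^* + \varepsilon$ and reading off nonnegativity from the psd moment and localizing matrices.

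One small technical imprecision worth fixing: the degree condition you impose, $\deg(s_j g_j) \le 2k$, is not quite the right one. To extract $L(s_j g_j) = \sum_\ell \mathbf{p}_{j\ell}^T M_{k - \lceil \deg g_j/2 \rceil}(g_j\,y)\,\mathbf{p}_{j\ell}$ from the truncated localizing matrix you need the coefficient vectors $\mathbf{p}_{j\ell}$ to be indexed by basis elements of degree at most $k - \lceil \deg g_j/2 \rceil$, i.e.\ $\deg p_{j\ell} + \lceil \deg g_j/2 \rceil \le k$. When $\deg g_j$ is odd this is strictly stronger than $\deg(s_j g_j) \le 2k$. The fix is trivial—just take $k \ge \max_j \big(\tfrac{1}{2}\deg s_j + \lceil \deg g_j/2\rceil\big)$—but as written the inequality chain would not literally go through for an odd-degree constraint. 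You are also right that the excerpt's Theorem~\ref{th:putinar} is the moment-side statement; the representation you invoke is Putinar's Positivstellensatz proper, which holds under the same Assumption~\ref{putinar-assumption} and is correctly flagged as a separate ingredient.
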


Although there are sufficient conditions to decide whether an optimal value has been reached after a certain iteration (see for example \cite{detection,lassere-book}), in general only in some situations finite convergence can be guaranteed: 

\begin{prop}[Laurent \cite{laurent-2007}] \label{prop:zerodim}
Let $f,g_1,\ldots, g_m\in\R[X]$ and consider the problem
\begin{equation}
\label{eq:laurentcase}
\inf\limits_{x\in \R^n} \{ f(x) \, : \, g_1(x)=\cdots=g_m(x)=0 \} \, .
\end{equation}
If the ideal generated by $g_1, \ldots, g_m$ is zero-dimensional then 
the Lasserre relaxation scheme of~\eqref{eq:laurentcase} 
has finite convergence, i.e., there is an $l\geq k_0$ such that 
$\inf Q_l =f^*$.
\end{prop}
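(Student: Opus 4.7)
The plan is to reduce to a finite-dimensional linear algebra statement via the zero-dimensionality hypothesis and then invoke a flat-extension argument to certify that any feasible moment sequence of $Q_k$ comes from an atomic probability measure on the real variety. I will work with the equality-constrained version of the relaxation of \eqref{eq:laurentcase}, obtained from (\ref{relax}) by encoding each constraint $g_j=0$ as the pair $g_j\geq 0$, $-g_j\geq 0$. For every feasible $L$ at level $k$, this forces the localizing matrices to satisfy $M_{k-\lceil\deg g_j/2\rceil}(g_j y)=0$, and hence $L(p\,g_j)=0$ for every $p\in\R[X]$ with $\deg(p\,g_j)\leq 2k$. In particular, viewed as a linear form on $\R[X]_{\leq 2k}$, $L$ annihilates the truncated ideal $I^{(2k)}:=\{\sum_{j=1}^m p_j g_j \,:\, \deg(p_j g_j)\leq 2k\}$ generated by $g_1,\ldots,g_m$.

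Next I would exploit the hypothesis that $I:=\langle g_1,\ldots,g_m\rangle$ is zero-dimensional. This implies that $\R[X]/I$ is a finite-dimensional $\R$-algebra of some dimension $D<\infty$, with a monomial basis supported in some bounded degree $d^*$ depending only on $I$ (extractable, e.g., from a Gr\"obner basis). For $k$ sufficiently large, the truncated ideal $I^{(2k)}$ coincides with $I\cap\R[X]_{\leq 2k}$, so the canonical projection $\R[X]_{\leq 2k}\to\R[X]/I$ is surjective with kernel exactly $I^{(2k)}$, and a feasible $L$ induces a well-defined linear form on the quotient. Consequently the image of the moment matrix $M_k(y)$ has rank at most $D$, and the rank stabilizes once $k\geq k^{**}$ for some threshold depending only on $I$. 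This places us in the flat-extension regime of Curto--Fialkow: $y$ extends uniquely to an infinite psd sequence representable by a finitely supported atomic measure $\mu$ on $\R^n$, whose support lies in the common real zero set of $g_1,\ldots,g_m$ (since the localizing matrices for the $g_j$ vanish after extension).

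Since $\mu$ is supported on the feasible set of \eqref{eq:laurentcase}, we obtain $L(f)=\int f\,d\mu\geq f^*$, which yields $\inf Q_k\geq f^*$. Combined with the converse inequality $\inf Q_k\leq f^*$, obtained by taking $y$ to be the moment sequence of the Dirac mass at an optimizer, this gives $\inf Q_k=f^*$ for every $k$ beyond the threshold $\ell$. I expect the main obstacle to be making the flat-extension step quantitative: one must establish (i) that beyond a certain degree every element of $I$ admits a representation $\sum p_j g_j$ whose summands have controlled degree, so that $I^{(2k)}$ really equals $I\cap\R[X]_{\leq 2k}$, and (ii) that the rank of $M_k(y)$ stabilizes precisely at $D=\dim_\R\R[X]/I$, which is the precondition for applying the flat-extension theorem. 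Both are essentially Noetherian facts, but they are where the Curto--Fialkow machinery does the real work of converting a truncated, psd, ideal-annihilating functional into a bona fide probability measure on a finite set.
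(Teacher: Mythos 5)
The paper does not give a proof of Proposition~\ref{prop:zerodim}: it is stated as a cited result of Laurent~\cite{laurent-2007}, so there is no in-paper proof to compare your sketch against. Assessing it on its own merits, your overall architecture is the right circle of ideas and is essentially what Laurent's paper does: feasible functionals of $Q_k$ annihilate a truncation of the ideal, zero-dimensionality forces $\R[X]/I$ to be a finite-dimensional algebra, and a Curto--Fialkow flat extension then produces a finitely supported representing measure on $V_{\R}(I)$, which yields $\inf Q_k\ge f^*$ and hence equality.

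However, the two items you flag at the end as ``obstacles'' are not routine book-keeping; they are the theorem, and neither is correct as you state it. First, the assertion that $I^{(2k)}=I\cap\R[X]_{\le 2k}$ once $k$ is large is false in general: a polynomial of degree $d$ in $I$ need not admit a representation $\sum p_jg_j$ with $\deg(p_jg_j)\le d$, and the degree defect does not vanish by increasing $k$ because the target degree $2k$ grows in lockstep. What is true, and what one actually needs, is a \emph{uniform additive} bound: there is a constant $C$, obtainable from a Gr\"obner basis of $I$ in a degree-compatible order, such that every $p\in I$ of degree $d$ has a representation of degree $\le d+C$; that slack $C$ must then be carried through the whole argument. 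Second, knowing $\rank M_k(y)\le D=\dim_{\R}\R[X]/I$ is not the hypothesis of Curto--Fialkow; one needs the \emph{flatness} condition $\rank M_k(y)=\rank M_{k-\delta}(y)$, which must be derived by showing that the columns of $M_k(y)$ indexed by monomials of degree in $(k-\delta,k]$ lie in the span of the lower-degree columns. That in turn requires writing each $m-\sum_i c_i m_i\in I$ as $\sum_j p_jg_j$ with degree bounds compatible with what the localizing conditions actually enforce --- and note that $M_{k-\lceil\deg g_j/2\rceil}(g_jy)=0$ only forces $L(pg_j)=0$ for $\deg p\le 2k-2\lceil\deg g_j/2\rceil$, which is strictly less than ``$\deg(pg_j)\le 2k$'' when $\deg g_j$ is odd. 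Reconciling these two degree budgets (the Gr\"obner slack $C$ and the localizing truncation) is exactly where Laurent's kernel analysis does the real work. In short: the strategy is sound, but your proposal names the genuine gaps rather than closing them, and the two specific claims it makes about them are not correct as stated.
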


\subsection{Polynomial matrix inequalities\label{se:pmi}}

An interesting case of a polynomial optimization problem which will be
relevant for some of our approaches arises
when dealing with positive semidefiniteness of a matrix whose entries are polynomials.

Let $S_m$ denote the set of real symmetric $m \times m$-matrices.
A \emph{polynomial matrix inequality (PMI)} optimization problem
is an optimization problem of the form
\[
 \begin{array}{rcl}
   f^* & = & \inf_{x\in\R^n} f(x) \\
       &   & \text{s.t.\ } G(x) \succeq 0
 \end{array}
\]
where $f \in \R[X]$ and $G(X)$ is a symmetric $m \times m$-matrix whose entries
$G_{ij}(X)$ are polynomials in $X$.

By considering the psd condition on $G(x)$ as polynomial constraints
and using the approach from Subsection~\ref{se:lasserre},
one would have to deal with polynomials of large degree. Even if all $G_{ij}(X)$ are linear for example the polynomial inequalities one needs to consider are of degree $m$.  This high degree could make it even hard to explicitly calculate the first possible relaxation.

To overcome this problem, SDP hierarchies were proposed
in \cite{henrion-lasserre-pmi,Hol-Scherer} that take into account
the semidefiniteness of a polynomial matrix.
The basic idea is to generalize the standard approach
in a suitable way
by defining a localizing matrix for the matrix $G(X)$. This (infinite)
matrix consists of blocks which are indexed by the elements of a basis $\mathcal{B}$ of $\R[X]$, the entries of each block are indexed with the entries of $G(X)$, and the entry $i,j$ of the block corresponding to $u,v\in\mathcal{B}$ is
$$M(G \, y)^{u,v}_{i,j} \ := \ L(u \cdot v \cdot G_{ij}(X)), \quad u, v \in \mathcal{B} \,,
\quad i,j\in\{1,\ldots,m\} .$$

Setting 
$d:=\max\{ \lceil \deg G_{ij}(X)/2 \rceil\}$ and $k\geq k_0:=\max  \{ \lceil \deg f/2 \rceil, d\}$, 
one can define a relaxation
\begin{equation}\label{relax2}
 Q_k:\quad\begin{array}{rcl}
  \multicolumn{3}{l}{\inf_{y} L(f)} \\
 M_k(y) & \succeq & 0 \, , \\
 M_{k-d}(G \, y) & \succeq & 0 
 \end{array}
\end{equation}
with the truncated matrix $M_k$ of $M$.
In order to guarantee the convergence of this relaxation one needs to assume the Putinar condition viewed in this setting:
\begin{assum}
\label{putinar-assumption2}
Suppose that there is $u\in\R[X]$ such that
the level set $\{x\in\R^n\::\:u(x)\geq0\}$ is compact and $u$ has the
representation
\begin{equation}
\label{put2}
u\,=\,u_0+\langle R(X),G(X)\rangle
\end{equation}
for some sum of squares polynomials $u_0\in\R[X]$ and a symmetric
sum of squares matrix  $R(X)\in\R[X]^{m\times m}$.
\end{assum}

Then the following convergence statement holds \cite{henrion-lasserre-pmi}.

\begin{prop}\label{prop:Las_PIM}
If $G(X)$ meets the Assumption~\ref{putinar-assumption2}
then the sequence $(\inf Q_k)_{k\geq k_0}$ is monotonically non-decreasing and
converges to $f^*$.
\end{prop}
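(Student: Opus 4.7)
The plan is to mirror the proof of Proposition \ref{pr:lasserreconv} using a matrix version of Putinar's Positivstellensatz in place of its scalar counterpart.

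\emph{Monotonicity and the trivial upper bound.} If $y$ is feasible for $Q_{k+1}$, then its truncation to moments of degree at most $2k$ is feasible for $Q_k$, since $M_k(y)$ and $M_{k-d}(Gy)$ appear as principal submatrices of the corresponding matrices at level $k+1$ and therefore inherit positive semidefiniteness, while the objective $L(f)$ depends only on the moments of degree up to $2k_0 \le 2k$. Hence $\inf Q_{k+1} \ge \inf Q_k$. For the upper bound, fix a minimizer $x^* \in K$ (which exists because Assumption~\ref{putinar-assumption2} forces $K$ to be compact) and set $L(p) := p(x^*)$, i.e., $y_b := b(x^*)$. Then $M(y)$ is the rank-one outer product of $(b(x^*))_{b \in \mathcal{B}}$ with itself, and each block of $M(Gy)$ equals this outer product scaled by $G_{ij}(x^*)$, so $M(Gy)$ is PSD precisely because $G(x^*) \succeq 0$. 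Truncating yields a feasible point at every level with value $f^*$, hence $\inf Q_k \le f^*$.

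\emph{Convergence.} The key external ingredient is the matrix Positivstellensatz of Hol--Scherer (in the spirit of Putinar): under Assumption~\ref{putinar-assumption2}, any polynomial strictly positive on $K = \{x : G(x) \succeq 0\}$ admits a representation $s_0(X) + \langle S(X), G(X) \rangle$ with $s_0$ a sum of squares in $\R[X]$ and $S(X)$ a sum-of-squares symmetric matrix in $\R[X]^{m \times m}$. Given $\varepsilon > 0$, apply this to the polynomial $f - f^* + \varepsilon$, which is strictly positive on $K$, to obtain
\begin{equation*}
f - f^* + \varepsilon \ = \ s_0 + \langle S(X), G(X) \rangle \, .
\end{equation*}
Choose $k$ large enough that $2k$ exceeds the degrees of $s_0$ and of $\langle S, G \rangle$. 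For any $y$ feasible at level $k$, apply the linear functional $L$ to this identity. Writing $s_0 = \sum_r h_r^2$ gives $L(s_0) = \sum_r h_r^\top M_k(y) h_r \ge 0$, and writing $S = \sum_r T_r T_r^\top$ with polynomial vectors $T_r$ expresses $L(\langle S, G \rangle) = \sum_r \sum_{i,j} L((T_r)_i (T_r)_j G_{ij})$ as a sum of quadratic forms $T_r^\top M_{k-d}(Gy) T_r \ge 0$ evaluated against the PSD localizing matrix. Therefore $L(f) \ge f^* - \varepsilon$, whence $\inf Q_k \ge f^* - \varepsilon$ for all sufficiently large $k$. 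Combined with the upper bound this yields the monotone convergence $\inf Q_k \uparrow f^*$.

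\emph{Main obstacle.} The substantive step is the matrix Putinar theorem, which is imported from the PMI literature (see \cite{henrion-lasserre-pmi,Hol-Scherer}) rather than reproved here; everything else is bookkeeping. The only mildly technical reduction is the verification that pairing the truncated localizing matrix $M_{k-d}(Gy)$ with an SOS matrix $S(X)$ produces a nonnegative quantity, which is exactly the computation above once one identifies $\langle T_r T_r^\top, G \rangle = \sum_{i,j}(T_r)_i (T_r)_j G_{ij}$ with the quadratic form read off from the block structure of $M(Gy)$.
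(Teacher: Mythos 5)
The paper does not give its own proof of this proposition; it is stated with a citation to \cite{henrion-lasserre-pmi}, where the result is established. Your proof is a correct and essentially complete reconstruction of the argument from that reference: the monotonicity follows from the principal-submatrix structure of the truncations, the upper bound $\inf Q_k \le f^*$ comes from feasibility of the Dirac moment sequence at a minimizer (whose existence is guaranteed because Assumption~\ref{putinar-assumption2} forces $K$ compact, since $\langle R(x), G(x)\rangle \ge 0$ whenever $G(x) \succeq 0$ and $R$ is an SOS matrix), and the convergence is exactly the dual pairing of a feasible $L$ against the Hol--Scherer matrix Positivstellensatz representation of $f - f^* + \varepsilon$.

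Two small points worth making precise. First, the definition of $Q_k$ in \eqref{relax2} omits the normalization $y_1 = 1$, but this is implicit (otherwise the problem is unbounded below by scaling) and your argument uses $L(1)=1$ when concluding $L(f) \ge f^* - \varepsilon$ from $L(f - f^* + \varepsilon) \ge 0$; it would be cleaner to state this explicitly. Second, in the display $L(\langle S, G\rangle) = \sum_r T_r^\top M_{k-d}(Gy) T_r$, the quantity being paired against $M_{k-d}(Gy)$ is really the coefficient vector $c_r$ of $T_r$ in the polynomial basis, indexed by pairs $(u,i)$ with $u \in \mathcal{B}$, $\deg u \le k-d$, $1 \le i \le m$; the notation $T_r^\top M_{k-d}(Gy) T_r$ is a shorthand for $c_r^\top M_{k-d}(Gy) c_r$. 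With those minor clarifications the proof is sound and follows the same route as the cited reference.
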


\subsection{Linear representation theory\label{se:representation}}
We collect some notions from linear representation theory. As 
standard reference see \cite{serre-b77}. 
For our purposes, we always assume that $G$ is a finite group and 
that $\K$ is either the field $\R$ of real numbers or the field $\C$ of complex numbers.
 
A \emph{representation of} $G$ is a finite-dimensional vector space $V$ over $\K$
together with a group homomorphism $\rho: G\rightarrow \Gl(V)$ 
into the set $\Gl(V)$ of invertible linear transformations of $V$. 
If a basis for $V$ is chosen, then the representation can be expressed as a 
group homomorphism into the group $\Gl_n(\K)$, where $n := \dim V$. 
This is known as a \emph{matrix representation}.
The action of $G$ turns $V$ into a $G$-\emph{module}, and in fact,
the notion of a representation of $G$ and the notion of a $G$-module are
equivalent and can be identified. 
Two representations $(V,\rho)$ and $(V',\rho')$ of the same group $G$ are \emph{equivalent} if there
is a linear isomorphism $\phi: V\rightarrow V'$ such that
$\rho'(\sigma) \ = \ \phi \, \rho(\sigma) \, \phi^{-1} \quad \text{for all } \sigma\in G.$

\begin{ex}
\label{ex:reprex}
\begin{enumerate}
\item The one-dimensional representation $(\id,\K)$
(i.e., $V=\K$ and group action $\sigma(v)=v$ for all $\sigma\in G$ and $v\in\K$)
is called the \emph{trivial representation}.
\item Take any set $S$ on which $G$ operates and set
$V = \bigoplus_{s\in S}\C e_{s}$ with formal symbols $e_s$ ($s \in S$).
Then the obvious action of $G$ on $V$ defined via $\sigma(e_s)=e_{\sigma(s)}$ 
turns $V$ into a $G$-module. In the special case when $S=G$ this is called the \emph{regular representation}.
\end{enumerate}
\end{ex}

If there is a proper $G$-submodule $W$ of~$V$ (i.e., a $G$-invariant
proper subspace $W$ of $V$)
then the representation $(\rho,V)$ is called
\emph{reducible}. If, however, the only $G$-invariant subspaces are
$V$ and
$\{0\}$, then $(\rho,V)$ is called \emph{irreducible}.
For a finite group $G$, any representation $(V,\rho)$ decomposes
 as a direct sum of irreducible representations,
$\rho \ = \ \bigoplus_{i=1}^h m_i \rho_i$ with multiplicities
$m_i$. This induces an \emph{isotypic decomposition} of
the $G$-module $V$ as a direct sum
$V = \bigoplus_{i=1}^h V_i$
with \emph{isotypic components} $V_i = \bigoplus_{j \in J_i} W_{ij}$,
finite index sets $J_i$ with $|J_i| = m_i$,
and, for fixed $i$, pairwise isomorphic, irreducible components $W_{ij}$ 
(where each component $W_{ij}$
is associated with the irreducible representation $\rho_i$).
Whereas the decomposition in isotypic components is unique, the decomposition of the $V_i$ is in general not unique. 

The following basic but fundamental result of representation theory will be very useful
(see, e.g., \cite[Proposition~4]{serre-b77}).

\begin{lemma}[Schur's Lemma] \label{le:Schur}
Let $(\rho_1,V)$ and $(\rho_2,W)$ be two irreducible representations of a group 
$G$ (over $\R$ or $\C$).
Then every $G$-homomorphism $V \to W$ is either zero or an 
isomorphism.
Moreover, if two complex irreducible representations 
$(\rho_1,V)$ and $(\rho_2,W)$ are isomorphic
then the vector space of all $G$-homomorphisms $V \to W$ has dimension~1.
\end{lemma}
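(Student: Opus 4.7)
The plan is to attack the two assertions in turn, using the defining property of $G$-homomorphisms together with irreducibility, and for the second part also the algebraic closedness of $\C$.

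For the first assertion, I would start with any $G$-homomorphism $\phi \colon V \to W$ and observe that both $\ker \phi \subseteq V$ and $\operatorname{im} \phi \subseteq W$ are $G$-invariant subspaces. Indeed, if $v \in \ker \phi$ and $\sigma \in G$, then $\phi(\rho_1(\sigma) v) = \rho_2(\sigma) \phi(v) = 0$, and an analogous computation works for the image. Since $V$ is irreducible, $\ker \phi$ is either $\{0\}$ or $V$; since $W$ is irreducible, $\operatorname{im} \phi$ is either $\{0\}$ or $W$. If $\phi \neq 0$, then $\ker \phi \neq V$ forces $\ker \phi = \{0\}$, and $\operatorname{im} \phi \neq \{0\}$ forces $\operatorname{im} \phi = W$, so $\phi$ is a bijective $G$-homomorphism, i.e., an isomorphism.

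For the second assertion, suppose $(\rho_1, V)$ and $(\rho_2, W)$ are isomorphic complex irreducible representations, and fix one isomorphism $\phi_0 \colon V \to W$. Let $\phi \colon V \to W$ be an arbitrary $G$-homomorphism. The composition $\phi_0^{-1} \circ \phi \colon V \to V$ is then a $G$-endomorphism of the complex vector space $V$. Because $\C$ is algebraically closed, this endomorphism has at least one eigenvalue $\lambda \in \C$. Consequently $\phi_0^{-1} \circ \phi - \lambda \, \id_V$ is a $G$-endomorphism of $V$ with nontrivial kernel (namely the $\lambda$-eigenspace), and by the first assertion applied to $V \to V$ this endomorphism must be identically zero. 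Hence $\phi = \lambda \, \phi_0$, showing that every $G$-homomorphism $V \to W$ lies in $\C \cdot \phi_0$; together with the fact that $\phi_0 \neq 0$ this yields that the space of $G$-homomorphisms has dimension exactly one.

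The only slightly delicate point is the use of algebraic closure in the second part: if one were to replace $\C$ by $\R$, a real $G$-endomorphism of $V$ need not have a real eigenvalue, and the space of $G$-homomorphisms can then be $2$- or $4$-dimensional (corresponding to the division algebras $\C$ and $\mathbb{H}$). So the assumption of working over $\C$ is essential precisely at the step of extracting an eigenvalue $\lambda$. Everything else is a direct consequence of irreducibility applied to the canonically defined $G$-invariant subspaces $\ker \phi$ and $\operatorname{im} \phi$.
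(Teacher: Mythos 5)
Your proof is correct and is the standard argument (essentially the one in Serre, which the paper cites without reproducing a proof): part one via $G$-invariance of $\ker\phi$ and $\operatorname{im}\phi$ plus irreducibility, part two by pulling back to a $G$-endomorphism of $V$, extracting an eigenvalue over $\C$, and invoking part one to conclude $\phi=\lambda\phi_0$. Your closing remark on why algebraic closure is essential (and the real alternatives $\R$, $\C$, $\mathbb{H}$) is accurate and lines up with the paper's later discussion of the three types of real irreducible representations.
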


When working with real representations
a little precaution is necessary  and it can be useful to pass to the complexification 
of the real representation. For a real irreducible representation $(\rho, V)$
the following three types are distinguished
(see~\cite[Section~13.2]{serre-b77}):
If the complexification $V\otimes \C$ is also irreducible (type {\bf I}) then 
statements such as the second part of
Lemma~\ref{le:Schur} directly transfer from $V \otimes \C$ to $V$.
However, it may occur that a real irreducible representation $(\rho,V)$ 
becomes reducible when passing to the complexification. 
In this case, $V\otimes\C$ will decompose into two complex-conjugate irreducible
$G$-submodules $V_1$ and $V_2$.
Since $V_1$ and $V_2$ are complex conjugates we can ``virtually'' keep track of this decomposition by decomposing $V\otimes\C$ as 
$V_1+V_2 \oplus \frac{1}{\mathrm{i}}(V_1-V_2)$, where $\mathrm{i}$ is the imaginary unit.
This is a real basis of $V$, which respects the decomposition of $V\otimes \C$.
Either the $G$-submodules $V_1$ and $V_2$ are non-isomorphic (type {\bf II}) 
or they are isomorphic (type {\bf III}).

Let $V$ be a real $G$-module. 
If the complexification $V \otimes C$ has the isotypic decomposition
$V\otimes\C=V_1\oplus\cdots\oplus V_{2l}\oplus V_{2l+1} \oplus\cdots\oplus V_h$, 
where each pair $(V_{2j-1}, V_{2j})$ is complex conjugate ($1 \le j \le l$) 
and $V_{2l+1},\ldots, V_{h}$ are real, the decomposition
\begin{equation}\label{eq:realdecom}
V \ = \ \left( V_1+V_2 \right)
\oplus \frac{1}{\mathrm{i}} \Big(V_1-V_2 \Big)\oplus \cdots \oplus \left( V_{2l-1}+V_{2l} \right) \oplus \frac{1}{\mathrm{i}}
\Big( V_{2l-1} - V_{2l} \Big)\oplus V_{2l+1}\oplus\cdots\oplus V_{h}\end{equation}
is called a \emph{real decomposition}.
     
\subsection{Symmetric group}\label{se:prelimsymmgroup}
An important special case is when $G$ is the symmetric group
$\mathcal{S}_n$ on $n$ variables.
We collect some well-known facts on the irreducible representations of 
$\mathcal{S}_n$.
For a general reference we refer to~\cite{sagan-2001}.

For $n \ge 1$,
a \emph{partition} $\lambda$ of $n$
(written $\lambda\vdash n$)
is a sequence of weakly decreasing positive integers
$\lambda=(\lambda_1,\lambda_2,\ldots,\lambda_l)$ with
$\sum_{i=1}^l\lambda_i=n$. For two partitions $\lambda, \mu \vdash n$
we write $\lambda \unrhd \mu$ if
$\lambda_1 + \cdots + \lambda_i \ge \mu_1 + \cdots + \mu_i$ for all $i$.
A \emph{Young tableau} for $\lambda\vdash n$ consists of $l$ rows, with
$\lambda_i$
entries in the $i$-th row.
Each entry is an element in $\{1, \ldots, n\}$, and each of these
numbers occurs
exactly once.
A \emph{standard Young tableau} is a Young tableau in which all rows and
columns
are increasing.

\begin{ex} For the partition $\lambda=(4,3,1,1,1) \vdash 10$, an example of a
Young tableau is

\hspace{60mm}  \begin{Young}
    1  &  3 &  4 & 6\cr
    5  &  7 & 8   \cr
    9       \cr
    2      \cr
    10           \cr
    \end{Young} \, .

\end{ex}
An element $\sigma \in \mathcal{S}_n$ acts on a Young tableau by
replacing each entry by its image under $\sigma$.
Two Young tableaux $t_1$ and $t_2$ are called \emph{row equivalent}
if the corresponding rows of the two tableaux contain the same numbers.
The classes of equivalent Young tableaux are called \emph{tabloids}, and
the class
of a tableau $t$ is denoted by $\{t\}$.
Let $\{t\}$ be a $\lambda$-tabloid. 
The action of $\mathcal{S}_n$ gives rise to an $\mathcal{S}_n$-module:

\begin{definition}\label{def:Permutationmodule}
Suppose $\lambda\vdash n$.
The \emph{permutation module} $M^{\lambda}$
\emph{corresponding to} $\lambda$ is the $\mathcal{S}_n$-module
defined by  $M^\lambda=\myspan \{ \{t_1\}, \ldots ,\{t_l\}\}$,
where $\{t_1\}, \ldots, \{t_l\}$ is a complete list of
$\lambda$-tabloids.
\end{definition}

\begin{ex}
If $\lambda=(1,1, \ldots ,1) \vdash n$
then $M^\lambda$ is isomorphic to the
regular representation of 
$\mathcal{S}_n$ from Example~\ref{ex:reprex}.
In case $\lambda = (2,1)$ a complete list of $\lambda$-tabloids is
given by the representatives

\begin{equation}
 \label{eq:young1}
 \begin{Young}
    1  &  2 \cr
    3 \cr
 \end{Young} \qquad
 \begin{Young}
    1  &  3 \cr
    2 \cr
 \end{Young} \qquad
 \begin{Young}
    2  &  3 \cr
    1 \cr
 \end{Young} \quad .
\end{equation}
\end{ex}
Let $t$ be a Young tableau for $\lambda\vdash n$, and let 
$\mathcal{C}_1, \ldots, \mathcal{C}_{\nu}$ be the columns of $t$.
The group $\CStab_t \ = \ \mathcal{S}_{\mathcal{C}_1}\times 
\mathcal{S}_{\mathcal{C}_2}\times
\cdots \times \mathcal{S}_{\mathcal{C}_\nu}$
(where $\mathcal{S}_{\mathcal{C}_i}$ is the symmetric group on $\mathcal{C}_i$)
is called the \emph{column stabilizer} of $t$.

The irreducible representations of the symmetric group $\mathcal{S}_n$
are in 1-1-correspondence
with the partitions of $n$, and they are given by the Specht modules, as
explained in the following.

For $\lambda\vdash n$, the \emph{polytabloid associated with} $t$ is
defined by
\begin{equation}
\label{eq:polytabloid}
e_t \ = \ \sum_{\sigma\in \CStab_t} \sgn(\sigma)\sigma\{t\} \, ,
\end{equation}
where $\sgn(\sigma)$ denotes the signum of $\sigma$.
Then for a partition $\lambda\vdash n$, the \emph{Specht module}
$S^{\lambda}$
is the submodule of the permutation module $M^\lambda$ spanned by the
polytabloids $\{e_t \, : \, t \text{ Young tableau for }\lambda \vdash n\}$.
The dimension of $S^{\lambda}$ is given by the number of standard
Young tableaux for $\lambda \vdash n$.

\begin{example}
\label{ex:decomp1}
For $n \ge 2$, we have the decomposition into irreducible components
$M^{(n-1,1)} = S^{(n)} \oplus S^{(n-1,1)}$. Namely,
since the one-dimensional subspace spanned by the sum $t_1 + \cdots +
t_n$ is closed
under the action of $\mathcal{S}_n$, we have a copy of the
\emph{trivial}
representation (which is isomorphic to the Specht module $S^{(n)}$)
as irreducible component in $M^{(n-1
,1)}$. Moreover,
since the tabloids in~\eqref{eq:young1} are completely determined by the
entry in the second row, we have identified a copy of the
$(n-1)$-dimensional Specht module $S^{(n-1,1)}$ in $M^{(n-1,1)}$.
Indeed, the permutation module
$M^{(n-1,1)}$ decomposes as $M^{(n-1,1)} = S^{(n)} \oplus S^{(n-1,1)}$.
\end{example}
The decomposition of the module $M^\lambda$ for a general partition $\lambda\vdash n$ will be of special interest for us.
It can be described in a rather combinatorial way as follows:
\begin{definition}
\begin{enumerate}
\item A generalized \emph{Young tableau} of \emph{shape} $\lambda$ is a Young tableau $T$ for $\lambda$ such that the entries are replaced by any $n$-tuple of natural numbers. The \emph{content} of $T$ is the sequence $\mu$ such that $\mu_i$ is equal to the number of $i's$ in $T$.
\item A generalized Young tableau is called \emph{semi standard}, if its rows weakly increase and its columns strictly increase.
\item For $\lambda,\mu\vdash n$ the \emph{Kostka number} $K_{\lambda\mu}$ is defined as the number of semi standard Young tableaux of shape $\lambda$ and content $\mu$.
\end{enumerate}
\end{definition}
The significance of these definitions lies in the following
statement which originates from Young's work:
\begin{prop}\label{thm:youngrule}
For a partition
$\mu\vdash n$, the permutation module $M^\mu$ can be decomposed as
$$M^{\mu} \ = \ \bigoplus_{\lambda\unrhd \mu}K_{\lambda\mu}S^{\lambda}.$$
\end{prop}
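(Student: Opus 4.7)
The plan is to establish Young's rule by computing, for each $\lambda\vdash n$, the multiplicity of the Specht module $S^{\lambda}$ as a direct summand of $M^{\mu}$. The first step is the identification $M^{\mu}\cong\mathrm{Ind}_{\mathcal{S}_{\mu}}^{\mathcal{S}_n}(\mathbf{1})$, where $\mathcal{S}_{\mu}=\mathcal{S}_{\mu_1}\times\cdots\times\mathcal{S}_{\mu_l}$ is the Young subgroup. This is immediate because the $\mathcal{S}_n$-set of $\mu$-tabloids is isomorphic to $\mathcal{S}_n/\mathcal{S}_{\mu}$: the stabilizer of a chosen tabloid is precisely the subgroup permuting each row among itself. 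Since $\R\mathcal{S}_n$ is semisimple in characteristic $0$, Schur's lemma (Lemma~\ref{le:Schur}) together with Frobenius reciprocity reduces the multiplicity to $\dim\Hom_{\mathcal{S}_n}(S^{\lambda},M^{\mu})$.

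The core of the argument is to exhibit an explicit basis of $\Hom_{\mathcal{S}_n}(S^{\lambda},M^{\mu})$ indexed by the semistandard tableaux $T$ of shape $\lambda$ and content $\mu$. For each such $T$ one defines a homomorphism $\theta_T\colon M^{\lambda}\to M^{\mu}$ by sending a $\lambda$-tabloid $\{t\}$ to the sum of all $\mu$-tabloids obtained by replacing the entry in each cell of $t$ by the entry of $T$ in the corresponding cell, and symmetrizing over row-equivalent choices. Composing with the inclusion $S^{\lambda}\hookrightarrow M^{\lambda}$ given by the polytabloid formula~\eqref{eq:polytabloid} yields the desired map $\theta_T|_{S^{\lambda}}\in\Hom_{\mathcal{S}_n}(S^{\lambda},M^{\mu})$. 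The main technical obstacle, and what I expect to be the hard part, is to verify that the family $\{\theta_T|_{S^{\lambda}}\}_T$ is linearly independent and spans: this amounts to showing that $\theta_T$ respects the Garnir (column) relations of $S^{\lambda}$, and it is handled by a careful order-theoretic argument comparing leading terms of the $\theta_T\{t\}$ under a dominance order on tabloids, exploiting precisely the semistandard condition on $T$.

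Granted this basis, $\dim\Hom_{\mathcal{S}_n}(S^{\lambda},M^{\mu})=K_{\lambda\mu}$, and every $\lambda$ contributes with multiplicity $K_{\lambda\mu}$. It remains to recognize that only partitions with $\lambda\unrhd\mu$ actually contribute, i.e.\ that $K_{\lambda\mu}=0$ unless $\lambda\unrhd\mu$. For this, observe that in any semistandard tableau of shape $\lambda$ and content $\mu$, the entries $\le i$ must all lie in the top $i$ rows, since columns strictly increase. Counting cells yields $\lambda_1+\cdots+\lambda_i\geq \mu_1+\cdots+\mu_i$ for every $i$, which is exactly $\lambda\unrhd\mu$. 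Conversely, when $\lambda\unrhd\mu$ a semistandard filling always exists (fill each row greedily from left to right with the smallest available integer consistent with column-strictness), so $K_{\lambda\mu}\geq 1$, and hence the direct sum in the statement ranges over precisely the partitions $\lambda\unrhd\mu$.
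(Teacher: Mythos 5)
The paper does not prove Proposition~\ref{thm:youngrule} at all: it is stated as a classical fact ``which originates from Young's work'' with a pointer to \cite{sagan-2001}, so there is no in-paper argument to compare against. Your sketch reproduces the standard textbook proof (essentially the one in Sagan or James--Kerber): identify $M^\mu$ with the coset module $\R[\mathcal{S}_n/\mathcal{S}_\mu]$, reduce the multiplicity to $\dim\Hom_{\mathcal{S}_n}(S^\lambda,M^\mu)$ by semisimplicity, build the maps $\theta_T$ from semistandard tableaux, and finish with the combinatorial fact that $K_{\lambda\mu}>0$ iff $\lambda\unrhd\mu$. The outline is correct and you correctly flag the genuinely hard step --- that the $\theta_T|_{S^\lambda}$ are linearly independent and span --- as the technical core.

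Two small logical hiccups are worth fixing. First, the invocation of Frobenius reciprocity is idle: once you know $\R\mathcal{S}_n$ is semisimple, Schur's lemma alone gives that the multiplicity of $S^\lambda$ in $M^\mu$ equals $\dim\Hom_{\mathcal{S}_n}(S^\lambda,M^\mu)$; Frobenius reciprocity would instead convert that $\Hom$-space to $(\operatorname{Res}_{\mathcal{S}_\mu}S^\lambda)^{\mathcal{S}_\mu}$, which is a different (also valid) route that you then do not pursue. Second, the phrase ``this amounts to showing that $\theta_T$ respects the Garnir relations of $S^\lambda$'' misdescribes what must be proved: since $\theta_T$ is defined on all of $M^\lambda$ and one simply restricts it to the submodule $S^\lambda$, well-definedness is automatic. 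What the Garnir (column) relations and the dominance-order/leading-term argument actually establish is that the restrictions $\{\theta_T|_{S^\lambda}:T\ \text{semistandard}\}$ are linearly independent and that the non-semistandard $\theta_T$ can be expressed in terms of them, hence that this family is a basis of $\Hom_{\mathcal{S}_n}(S^\lambda,M^\mu)$. With those two corrections your sketch is a faithful outline of the classical proof that the paper is implicitly relying on.
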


\section{Symmetry-adapted relaxation}\label{se:symadapt}

As a first possibility to exploit symmetries in the framework of polynomial 
optimization, we consider the relaxation scheme introduced in 
Section~\ref{se:lasserre}.
Throughout the section, let $f,g_1, \ldots, g_m \in \R[X]$ 
and $K$ be the feasible set~\eqref{setk}.

The relaxation scheme yields a sequence of semidefinite programs.
While it is one possibility to exploit the symmetries on the level of the SDP,
the approach in this paper is to approach it on the level on top of this, i.e.,
on the level of the polynomials. To the best of our knowledge, in the framework of the relaxation scheme, no detailed
investigation has been made on the use of other polynomial bases (different from the standard
monomial basis). Here, the formulation of the relaxation scheme in the symmetry-adapted
basis will yield us symmetry-adapted versions of Putinar's Theorems 
(Theorems \ref{eq:SymMom} and \ref{thm:symPutinar})
and a symmetry-adapted relaxation scheme that converges
(Theorem \ref{thm:symscheme}).

In the following assume that $G$ is a finite group, although most of the 
results could be generalized to compact groups. We start by considering
linear group actions of $G$ on $\R^n$: For a set $S \subseteq \R^n$ and $\sigma \in G$,
let $\sigma(S) = \{x\in\R^n\, :\, \sigma^{-1}(x)\in S\}$.
By setting $p^{\sigma}(x):=p(\sigma^{-1}(x))$ for any polynomial $p \in \R[X]$
and $x \in \R^n$, $G$ induces a group action on $\R[X]$. If $p^{\sigma} = p$
for all $\sigma \in G$, then the polynomial $p$ is called $G$-\emph{invariant}.

Following Section~\ref{se:pre}, we now consider the probability measures
$\mathcal{P}(S)$ supported on a set $S$.
For $\mu \in \mathcal{P}(S)$,
$\sigma \in G$ and $\sigma^{-1}(S) \subseteq S$ define $\mu^{\sigma}$ by
$\mu^{\sigma}(B) = \mu(\sigma^{-1}(B))$ for any Borel set $B \subseteq S$.
A measure $\mu \in \mathcal{P}(S)$ is said to be {\it $G$-invariant} if $\mu = \mu^{\sigma}$
for all $\sigma \in G$, and the subset of all $G$-invariant probability measures on 
$S$ is denoted 
by $\mathcal{P}(S)^{G}$. For a comprehensive foundational treatment
of invariant measures we refer to \cite{cimpric}. 
Here, we mainly need the subsequent simple connection,
where for a set $S \subseteq \R^n$ we define
$S^{G} \ = \ \bigcap_{\sigma\in G}\sigma(S) \, .$
A set $S$ is called $G$-\emph{invariant} if 
$S=S^{G}$. Note, however that a $G$-invariant feasible set $K$ 
does not necessarily require that any of its defining polynomials
$g_j$ is $G$-invariant.
\begin{lemma}
Let the feasible set $K \subseteq \R^n$ be $G$-invariant. 
If $h\in  \R[X]$ is
$G$-invariant then
$$\inf_{x\in K} h(x) \ = \ \inf_{\mu\in\mathcal{P}^G(K)}\int_{K} h \, d\mu \, .$$
\end{lemma}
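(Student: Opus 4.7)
The plan is to prove the two inequalities separately. The inequality
\[
\inf_{\mu\in\mathcal{P}^G(K)}\int_{K} h\,d\mu \ \ge\ \inf_{x\in K} h(x)
\]
is immediate, since every probability measure $\mu$ on $K$ satisfies $\int_K h\,d\mu \ge \inf_{x\in K} h(x)$ (the integral is a weighted average of values of $h$ on $K$), and this bound persists when we restrict to the subfamily $\mathcal{P}^G(K) \subseteq \mathcal{P}(K)$.

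For the reverse inequality, the natural idea is to \emph{symmetrize} a (near-)minimizer. If $x^{*}\in K$ attains the infimum (which is the case in our setting, since Assumption~\ref{putinar-assumption} will ultimately be in force and $K$ is compact, but even without this one may work with a minimizing sequence $x_n\in K$ with $h(x_n)\to \inf_K h$), then I would define the averaged Dirac measure
\[
\bar{\mu} \ := \ \frac{1}{|G|}\sum_{\sigma\in G}\delta_{\sigma(x^{*})}.
\]
Because $K$ is $G$-invariant, every atom $\sigma(x^{*})$ lies in $K$, so $\bar\mu \in \mathcal{P}(K)$. Using the definition $\mu^{\tau}(B) = \mu(\tau^{-1}(B))$, one checks that $\delta_y^{\tau} = \delta_{\tau(y)}$, and hence for any $\tau \in G$ the measure $\bar{\mu}^{\tau} = \frac{1}{|G|}\sum_{\sigma\in G}\delta_{\tau\sigma(x^{*})}$ coincides with $\bar\mu$ after reindexing the sum over the group. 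Thus $\bar\mu\in\mathcal{P}^{G}(K)$.

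Finally, by the $G$-invariance of $h$ we have $h(\sigma(x^{*}))=h(x^{*})$ for every $\sigma\in G$, so
\[
\int_{K} h\,d\bar{\mu} \ = \ \frac{1}{|G|}\sum_{\sigma\in G} h(\sigma(x^{*})) \ = \ h(x^{*}) \ = \ \inf_{x\in K} h(x),
\]
which proves the other inequality. There is no real obstacle; the only point that needs a moment of care is verifying that the orbit-averaged measure genuinely belongs to $\mathcal{P}^{G}(K)$ — i.e., that the action on measures defined via preimages is compatible with the left action on points so that $\delta_y^{\tau}=\delta_{\tau(y)}$, and that the orbit of $x^{*}$ stays inside $K$, which is exactly the content of the $G$-invariance of the feasible set.
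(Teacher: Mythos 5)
Your proof is correct and follows essentially the same route as the paper's: the easy direction is a restriction argument, and the other direction is obtained by taking Dirac measures along a minimizing sequence (or at a minimizer) and averaging them over the group orbit to produce an invariant measure with the same integral. The only cosmetic difference is that you emphasize the identity $\delta_y^{\tau}=\delta_{\tau(y)}$ explicitly, while the paper states the averaged measure $\mu_k^{*}=\frac{1}{|G|}\sum_{\sigma}\mu_k^{\sigma}$ lies in $\mathcal{P}(K)^G$ without unpacking it.
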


\begin{proof}
We have 
\begin{eqnarray*}
h^* \ := \ \inf_{x\in K}h(x) \ = \ \inf_{\mu\in\mathcal{P}(K)}\int h \, d\mu&\leq&
\inf_{\mu\in\mathcal{P}(K^G)}\int h \, d\mu \, , \quad\mbox{as $\mathcal{P}(K)^G \subseteq\mathcal{P}(K)$} \, .
\end{eqnarray*}
Let $(x_k)$ be a minimizing sequence in $K$ such that $h(x_k) \rightarrow
h^*$ as $k \rightarrow\infty$.
To each $x_{k}$ we can define a Dirac measure $\mu_{k}$ supported in $x_{k}$. Now this gives a converging  sequence $(\int h(x)d\mu_k)$.  
The measure $\mu_{k}^{*}:=\frac{1}{|G|}\sum_{\sigma\in G} \mu_k^{\sigma}$
is contained in $\mathcal{P}(K)^G$ for every $k$. Since $h$ is $G$-invariant, $\int h(x)d\mu_k^{*}=h(x_k)$ which in turn implies
$\int h(x)d\mu_k^{*}\to h^*\leq \inf_{\mu\in\mathcal{P}(K)^G}\int fd\mu$,
and so $h^*=\inf_{\mu\in\mathcal{P}(K)^G}\int fd\mu$.
\end{proof}

So in order to find the infimum of a $G$-invariant function $f$ on 
a $G$-invariant set $K$ we only have to consider the invariant measures supported on $K$. 
Hence to make a relaxation scheme for this setting similar to the one presented in Section~\ref{se:lasserre}, it suffices to consider \emph{$G$-linear}
maps $L^G:\, \R[X]\to \R$, i.e., linear maps with $L^G(f)=L^G(f^{\sigma})$ for all 
$f \in \R[X]$ and $\sigma\in G$.
In analogy to Putinar's Theorem~\ref{th:putinar} we can also characterize them in terms of bilinear forms.

\begin{thm}\label{eq:SymMom}
Let $g_1,\ldots,g_m\in\R[X]$ be $G$-invariant, and assume that the
feasible set $K$ satisfies Assumption~\ref{putinar-assumption}.
Setting $g_0:=1$, a $G$-linear map $L^{G}: \R[X] \to \R$ is the
integration with respect to a $G$-invariant measure on $K$ if and only if the
bilinear forms
\begin{equation}
\label{eq:bilinpsd}
\begin{array}{rcl}
\mathcal{L}_{g_{j}}^{G} \: : \: \R[X]\times \R[X]&\to&\R\\
(p,q)&\mapsto&L^{G} \Big( \frac{1}{|G|}\sum\limits_{\sigma\in G}(p\cdot q)^{\sigma} \cdot g_j \Big)
\end{array}
\end{equation}
are psd for all $0 \le j \le m$.
\end{thm}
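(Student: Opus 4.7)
The plan is to reduce this to the standard Putinar Theorem~\ref{th:putinar} by exploiting the symmetrization operator $\Sym(p) := \frac{1}{|G|}\sum_{\sigma\in G} p^\sigma$, observing that $G$-linearity of $L^G$ implies $L^G(p) = L^G(\Sym(p))$ for every $p\in\R[X]$, and that $G$-invariance of the $g_j$ makes $\Sym$ commute with multiplication by $g_j$ up to $L^G$. I will also implicitly use the normalization $L^G(1)=1$, which is built into the statement that $L^G$ is integration against a probability measure.

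For the necessity direction, suppose $L^G(p) = \int p\,d\mu$ for some $G$-invariant $\mu\in\mathcal{P}(K)^G$. Then for any $p \in \R[X]$ and any $j$,
\[
\mathcal{L}^G_{g_j}(p,p) \;=\; \int \frac{1}{|G|}\sum_{\sigma\in G} (p^2)^\sigma \, g_j \, d\mu \;=\; \int \frac{1}{|G|}\sum_{\sigma\in G}(p^\sigma)^2\, g_j\,d\mu \;\ge\; 0,
\]
since each $(p^\sigma)^2$ is a square and $g_j \ge 0$ on $K$. Thus every $\mathcal{L}^G_{g_j}$ is psd. (Alternatively, one can move the symmetrization off of $p^2$ using the $G$-invariance of $\mu$ and $g_j$, which reduces the integrand to $p^2 g_j$.)

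For the sufficiency direction, assume each $\mathcal{L}^G_{g_j}$ is psd. I apply Putinar's theorem to $L := L^G$ viewed as an ordinary linear form on $\R[X]$. For any sum of squares $s = \sum_i p_i^2$ and any $j\in\{0,\ldots,m\}$,
\[
L(s\,g_j) \;=\; \sum_i L^G(p_i^2 g_j) \;=\; \sum_i L^G\!\bigl(\Sym(p_i^2 g_j)\bigr) \;=\; \sum_i L^G\!\Bigl(\tfrac{1}{|G|}\textstyle\sum_\sigma (p_i^2)^\sigma \cdot g_j\Bigr) \;=\; \sum_i \mathcal{L}^G_{g_j}(p_i,p_i) \;\ge\; 0,
\]
where the third equality uses the $G$-invariance of $g_j$ so that $g_j^\sigma = g_j$. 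By Theorem~\ref{th:putinar} (valid under Assumption~\ref{putinar-assumption}), $L^G$ is integration against some $\mu\in\mathcal{P}(K)$. It remains to check that $\mu$ is $G$-invariant. By $G$-linearity, for any $p\in\R[X]$ and $\sigma\in G$,
\[
\int p\,d\mu \;=\; L^G(p) \;=\; L^G(p^\sigma) \;=\; \int p^\sigma d\mu \;=\; \int p\,d\mu^{\sigma^{-1}},
\]
where the last equality uses the change-of-variables formula $\int p^\sigma d\mu = \int p\,d\mu^{\sigma^{-1}}$. Since $K$ is compact, polynomials are dense in $C(K)$ by the Stone--Weierstrass theorem, so the equality of integrals extends to all continuous functions and hence $\mu = \mu^{\sigma^{-1}}$ for every $\sigma\in G$, i.e., $\mu\in\mathcal{P}(K)^G$.

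The main conceptual obstacle is the interplay between the two symmetry bookkeepings: one must verify carefully that the symmetrization $\Sym$ inside the definition of $\mathcal{L}^G_{g_j}$ absorbs exactly the information lost when passing from $p^2 g_j$ to its $G$-average, which is precisely why the $G$-invariance hypothesis on the $g_j$ is indispensable. The remaining step---deducing $G$-invariance of $\mu$ from equality of all polynomial moments---is routine given compactness of $K$ via Stone--Weierstrass.
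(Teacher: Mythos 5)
Your proof is correct and follows essentially the same route as the paper's: in both directions the key computation moves the symmetrization operator across the product $p^2 g_j$ using $G$-invariance of $g_j$ (and of $\mu$ or $L^G$, respectively), reduces the sufficiency direction to an application of Theorem~\ref{th:putinar}, and finally extracts $G$-invariance of the resulting measure from equality of all polynomial moments together with compactness of $K$. The only cosmetic differences are that in the necessity direction you observe $(p^\sigma)^2 g_j \ge 0$ pointwise rather than invoking $\mu^\sigma = \mu$, and that you spell out the Stone--Weierstrass step that the paper leaves implicit; neither changes the argument in substance.
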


Note that the definition of $\mathcal{L}^G_{g_j}$ only uses the values of 
$L^G$ on
the invariant ring $\R[X]^G \subseteq \R[X]$. 
Indeed, any $G$-linear map $L^G : \R[X] \to \R$
is already completely determined by its values on $\R[X]^G$, 
since for any $p \in \R[X]$
we have $L(p) = \frac{1}{|G|} \sum_{\sigma \in G} L^G(p^{\sigma}) =
L^G(\frac{1}{|G|} \sum_{\sigma \in G} p^{\sigma})$.

\begin{proof}
{\it Only if part.} Let $L_\mu\,: \R[X] \rightarrow \R$ be the linear functional associated with a
$G$-invariant measure $\mu$, $L_\mu(f) :=\int_{K} f \, d\mu$. Then
\begin{eqnarray*}
L_\mu\left(\frac{1}{|G|}\sum_{\sigma\in G}(f^2)^{\sigma}\cdot g_j\right)&=&L_\mu\left(\frac{1}{|G|}\sum_{\sigma\in G}(f^2\cdot g_j)^\sigma\right)=\frac{1}{|G|}\sum_{\sigma\in G}\int_K(f^2\cdot g_j)^\sigma\, d\mu\\
&=&\frac{1}{|G|}\sum_{\sigma\in G}\int_K(f^2\cdot g_j) \, d\mu^{\sigma}\\
&=&\frac{1}{|G|}\sum_{\sigma\in G}\int_K(f^2\cdot g_j) \, d\mu \quad\mbox{as $\mu^{\sigma}=\mu$}\\
&\geq&0 \quad\mbox{as $g_j\geq 0$ on $K$.}
\end{eqnarray*}

{\it If part.} Conversely, let $L^G$ be as in the statement of the theorem. Then for all $h\in\R[X]$,
\begin{eqnarray*}
0 \ \leq \ \mathcal{L}_{g_j}^G(h^2)&=&L^G\left(\frac{1}{|G|}\sum_{\sigma\in G}(h^2)^{\sigma}g_j\right)=L^G\left(\frac{1}{|G|}\sum_{\sigma\in G}(h^2 g_j)^\sigma\right)\\
&=&\frac{1}{|G|}\sum_{\sigma\in G}L^{G}((h^2 g_j)^\sigma)\\
&=&\frac{1}{|G|}\sum_{\sigma \in G}L^G(h^2 g_j)\quad\mbox{as $L^G$ is $G$-invariant.}
\end{eqnarray*}
Since this holds for all $h\in  \R[X]$ and all $j = 1, \ldots, m$, by Putinar's Theorem $L^G$ is the
integration with respect to some measure $\mu$ on $K$. In addition, for every $h\in  \R[X]$
and every $\sigma\in G$,
$$\int_K h\, d\mu \ = \ L^G (h) \ = \ L^G(h^\sigma) \ =\ \int_K h^\sigma \, d\mu \ = \ 
\int _K h \, d\mu^\sigma \, ,$$
and so as $K$ is compact, $\mu = \mu^{\sigma}$ for all $\sigma\in G$, i.e., $\mu$ is $G$-invariant.
\end{proof}

So the $G$-invariant optimization problem~\eqref{eq:opt1} can be rephrased as
\begin{equation}\label{eq:sym}
p^{*} \ = \ \inf\left\{L^G(p)\: : \: L^G \text{ a $G$-linear map } \R[X]^G\rightarrow \R \, , \; L^G(1)=1 \text{ and each } \mathcal{L}_{g_j}^{G} \mbox{is psd}\right\}.
\end{equation}

The reformulation gives already a first computational advantage compared to the usual approach.
By the definition of $\mathcal{L}_{g_{j}}^{G}$ in ~\eqref{eq:bilinpsd} we can phrase
the moment matrix in terms of variables which are merely indexed by a basis $\mathcal{B}$ of the
invariant ring $\R[X]^G$. This reduction on the number of moment variables is
illustrated in the following example.

\begin{ex}\label{ex:cyclic}
Let $C_4$ denote the cyclic group of order four that operates on $\R^4$ by cyclically permuting the coordinates. 
The space of $C_4$-invariant polynomials of degree at most  2 is spanned by $b_0:=1$,
$b_1:=\frac{1}{4}(x_1+x_2+x_3+x_4)$, $b_2:=\frac{1}{4}(x_1^2+x_2^2+x_3^2+x_4^2)$, $b_3:=\frac{1}{4}(x_1x_2+x_2x_3+x_3x_4+x_4x_1)$,
$b_4:=\frac{1}{2}(x_1x_3+x_2x_4)$. By identifying 
$y_{i} = L^G(b_i)$ with moment variables $y_0, \ldots, y_4$,
the (truncated) psd condition~\eqref{eq:bilinpsd} for polynomials 
of degree at most~2 can be stated as
a matrix psd condition in only four variables $y_{1}, \ldots, y_{4}$,
$$\ \left( \begin {array}{ccccc} 1 &y_{1}&y_{1}&y_{1}&y_{1}\\
                  \noalign{\medskip}y_{1}&y_{2}&y_{3}&y_{4}&y_{3}\\
                 \noalign{\medskip}y_{1}&y_{3}&y_{2}&y_{3}&y_{4}\\
                 \noalign{\medskip}y_{1}&y_{4}&y_{3}&y_{2}&y_{3}\\
                 \noalign{\medskip}y_{1}&y_{3}&y_{4}&y_{3}&y_{2}\end {array}
\right) \ \succeq \ 0 \, .$$

\end{ex}

In addition to this reduction of the number of variables, the structure of the moment matrix approach can be simplified based on representation theory. 
In order to apply the methods from Section~\ref{se:representation},
observe that for any $k \ge 0$,
the subset (of $\R[X]$) of polynomials of total degree at most $k$
is finite-dimensional and thus can be viewed as a real $G$-module. 
As a consequence of this exhaustion process for $\R[X]$, there exists a 
complex decomposition of the form
\begin{equation}
\label{eq:complexdecomp}
  \R[X] \otimes \C \ = \ \bigoplus_{i=1}^h V_i \ = \ 
    \bigoplus_{i=1}^h \bigoplus_{j \in J_i} W_{ij}
\end{equation}
with complex irreducible components $W_{ij}$,
and corresponding real decomposition of the form~\eqref{eq:realdecom},
\begin{align}\label{eq:decomp}
\R[X] \ = \ (V_1+V_2)\oplus \frac{1}{\mathrm{i}}(V_1-V_2)\oplus \cdots\oplus 
(V_{2l-1} +V_{2l}) \oplus
\frac{1}{\mathrm{i}}(V_{2l-1} -V_{2l})\oplus V_{2l+1}\oplus\ldots\oplus V_{h} \, ,
\end{align}
where $\mathrm{i}$ is the imaginary unit.
Note that the index sets $J_1, \ldots, J_h$ may be infinite now.
The component with respect to the trivial irreducible
representation is the invariant ring $\R[X]^G$, and the elements of the
other isotypic components are called \emph{semi-invariants}. 

Fix an $i \in \{1, \ldots, h\}$. 
Since the $W_{ij}$ are pairwise isomorphic,
we can choose $G$-isomorphisms $\phi^i_j : W_{i1} \to W_{ij}$, 
$j \in J_i$.
Further we can assume that $W_{i1}$ is generated by a basis
$\{ s^{i}_{1,u} \, : \, 1 \le u \le \dim W_{ij}\} \subseteq \C[X]$, 
such that any
$s^{i}_{1,u}$ is in the $G$-orbit of $s^{i}_{1,1}$.
The basis vector $s^{i}_{1,1}$ transfers to
$W_{ij}$ by the $G$-isomorphism $\phi^i_j$ via
$s_{j,1}^i := \phi^i_j(s_{1,1}^i)$, and thus by this isomorphism $\phi^i_j$
the whole basis $\{ s^{i}_{1,u} \, : \, 1 \le u \le \dim W_{i1} \}$
of $W_{i1}$ transfers to a whole basis 
$\{s^{i}_{j,u} \, : \, 1 \le u \le \dim W_{ij} \}$ of $W_{ij}$.
Set $\mathcal{S}^i = \{s^i_{j,1} \, : \, j \in J_i \}$.

Using the bookkeeping techniques from Section~\ref{se:representation}
(describing the transition from~\eqref{eq:complexdecomp} to~\eqref{eq:decomp}),
the set $\mathcal{S}^i \subseteq \C[X]$ 
can be transformed into a subset of $\R[X]$ by distinguishing
the types I, II, and III. Therefore without loss of generality we can assume 
$\mathcal{S}^i$ to be real.

\begin{thm} \label{th:putinarsym1}
Let $g_1,\ldots,g_m \in \R[X]$ be $G$-invariant, assume that
the feasible set $K$ satisfies Assumption~\ref{putinar-assumption}, 
and set $g_0 :=1$.
A $G$-linear map $L^{G}:\R[X]\to\R$ is the integration with respect to a $G$-invariant 
measure $\mu$ on $K$ if and only if for all $i \in \{1, \ldots, h\}$ and all
$j \in \{1, \ldots, m\}$ the bilinear map
$\mathcal{L}^{G}_{g_j}$ from~\eqref{eq:bilinpsd} restricted to
$\mathcal{S}^{i}$ $(\subseteq \R[X])$ is positive semidefinite.
\end{thm}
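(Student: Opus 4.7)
The plan is to regard $\mathcal{L}^{G}_{g_j}$ as a $G$-invariant symmetric bilinear form on $\R[X]$ and to use Schur's Lemma in combination with the isotypic decomposition~\eqref{eq:decomp} to reduce the positive semidefiniteness on all of $\R[X]$ to the positive semidefiniteness on the finite collection $\mathcal{S}^i$ for each isotypic component.

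First, the ``only if'' direction is immediate: by Theorem~\ref{eq:SymMom} the bilinear form $\mathcal{L}^{G}_{g_j}$ is psd on $\R[X]\times\R[X]$, so in particular on each $\mathcal{S}^i\times\mathcal{S}^i$. For the ``if'' direction, I would first check that for $G$-invariant $g_j$ the form $\mathcal{L}^{G}_{g_j}$ is itself $G$-invariant, i.e.\ $\mathcal{L}^{G}_{g_j}(p^\tau,q^\tau)=\mathcal{L}^{G}_{g_j}(p,q)$ for all $\tau\in G$, because the $G$-averaging kills the $\tau$ and $g_j^\tau=g_j$. Hence $\mathcal{L}^{G}_{g_j}$ descends to a $G$-equivariant map $\R[X]\to\R[X]^{*}$ with respect to the natural $G$-actions on both sides.

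The main technical step is the block decomposition of $\mathcal{L}^{G}_{g_j}$ along~\eqref{eq:decomp}. Working first in the complexification~\eqref{eq:complexdecomp}, I would apply Schur's Lemma (Lemma~\ref{le:Schur}) to the $G$-equivariant map induced by $\mathcal{L}^{G}_{g_j}$ between any two irreducible components $W_{ij}$ and $W_{i'j'}$: the induced map vanishes whenever the isotypic indices differ, so $\mathcal{L}^{G}_{g_j}$ decomposes as a direct sum over $i$ of its restrictions to $V_i\times V_i$. Next, within a single isotypic component $V_i=\bigoplus_{j\in J_i}W_{ij}$, I would use the chosen $G$-iso\-mor\-phisms $\phi^i_j:W_{i1}\to W_{ij}$ together with the uniqueness (up to scalar) part of Schur's Lemma to conclude that, for the basis $s^i_{j,u}=\phi^i_j(s^i_{1,u})$, the values of $\mathcal{L}^{G}_{g_j}$ satisfy
\[
\mathcal{L}^{G}_{g_j}\bigl(s^i_{j_1,u_1},s^i_{j_2,u_2}\bigr)
\;=\;\delta_{u_1u_2}\,M^{i,j}_{j_1,j_2},
\]
where $M^{i,j}$ is the symmetric matrix whose entries are precisely the values of $\mathcal{L}^{G}_{g_j}$ on $\mathcal{S}^i\times\mathcal{S}^i$. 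Consequently the quadratic form $p\mapsto\mathcal{L}^{G}_{g_j}(p,p)$ on $V_i$ is a direct sum of $\dim W_{i1}$ copies of the quadratic form associated with $M^{i,j}$, and the psd-ness on $V_i$ is equivalent to $M^{i,j}\succeq 0$, i.e.\ to psd-ness on $\mathcal{S}^i$. Combining with the previous step, psd-ness on each $\mathcal{S}^i$ therefore yields psd-ness on all of $\R[X]$, and then Theorem~\ref{eq:SymMom} furnishes the desired $G$-invariant measure.

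The main obstacle will be executing the Schur-type argument in the \emph{real} setting. For isotypic components of type~I the above argument works verbatim, since $W_{i1}\otimes\C$ is irreducible and the space of $G$-invariant bilinear forms on $W_{i1}$ is one-dimensional. For types~II and~III one has to pass to the complexification and then descend via the bookkeeping~\eqref{eq:realdecom}: the complex-conjugate pair $(V_{2k-1},V_{2k})$ must be handled together and the basis $s^i_{j,u}$ rewritten in terms of the real basis $V_{2k-1}+V_{2k}$, $\tfrac{1}{\mathrm{i}}(V_{2k-1}-V_{2k})$ so that the matrix $M^{i,j}$ is real. A secondary (minor) issue is that $\R[X]$ is infinite-dimensional, so the isotypic decomposition must be interpreted via the exhaustion by the finite-dimensional $G$-modules $\R[X]_{\le k}$, but because $\mathcal{L}^{G}_{g_j}(p,p)$ only involves the finitely many basis vectors appearing in $p$, this causes no real difficulty.
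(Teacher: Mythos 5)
Your proposal is correct and follows essentially the same route as the paper's own proof: reduce via Theorem~\ref{eq:SymMom}, use Schur's Lemma to obtain orthogonality of $\mathcal{L}^{G}_{g_j}$ between distinct isotypic components, then use the scalar-multiple uniqueness part of Schur's Lemma within a single isotypic component to show $\mathcal{L}^{G}_{g_j}(s^i_{j_1,u_1},s^i_{j_2,u_2})=\delta_{u_1u_2}c^{i}_{j_1j_2}$ and hence that the quadratic form on $V_i$ splits into $\dim W_{i1}$ copies of the form indexed by $\mathcal{S}^i$, with the real types II and III handled by the bookkeeping of the complexification. The only difference is presentational: the paper carries out the exhaustion by $\R[X]_{\le k}$ implicitly and gives explicit formulas for the $\psi_{j,k}$, while you flag these points and argue at a slightly more schematic level.
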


\begin{proof}
By Theorem~\ref{eq:SymMom}, $L^G$ is the integration w.r.t.\ a measure $\mu$ supported on $K$ if and only
if all the bilinear forms $\mathcal{L}^{G}_{g_j}$ from~\eqref{eq:bilinpsd} are
psd. Clearly, the latter condition implies that all the restrictions to the
$\mathcal{S}^i$ are psd. 

Conversely, to keep indices simple, fix one of the polynomials
$g_0, \ldots, g_m$ and call it shortly $g$.
By assumption, the restriction of $\mathcal{L}^G_{g}$ to any 
$\mathcal{S}^i$ is psd, $1 \le i\le h$. 
We show that this already implies that $\mathcal{L}^{G}_{g}$ is psd.
To this end,  we first show that for distinct real
isotypic components $V_l$ and $V_k$ we have $L^G(p_l \cdot p_k \cdot g)=0$ 
for all $p_l\in V_l$ and $p_k\in V_k$. 
Indeed, each $p_l$ defines a linear map
\[ 
  \chi_{p_l}: V_k\rightarrow \R \, , \quad 
  q \mapsto \mathcal{L}^G_{g} (p_l,q) \, .
\]
Hence, the application $p_l\mapsto \chi_{p_l}$ gives rise to a $G$-homomorphism from 
$V_l$ to the dual space $V_k^{*}$ (and thus to a $G$-homomorphism from $V_l$
to $V_k$.)
But by Schur's Lemma \ref{le:Schur} this has to be the zero map and thus 
$\mathcal{L}^G_{g}(p_l,q)=0$\ for all $q\in V_k$. 
  
Consequently  it suffices to  show that for every $i \in \{1, \ldots, h\}$ the positive semidefiniteness on $\mathcal{S}^i$ implies already positive semidefiniteness on $V_i$.
We first assume that $V_i$ comes from a complex irreducible representation of
type {\bf I}.
Consider a pair $W_{ij}, W_{ik}$ in the 
decomposition~\eqref{eq:complexdecomp}, 
where we allow $j=k$. Similar to the above reasoning,
Schur's Lemma can be applied by considering a linear map 
$\psi_{{j,k}}: W_{ij}\rightarrow W_{ik}$ defined by its images
on the basis vectors,
$$\psi_{j,k}(s^{i}_{j,u})
 \ := \ \sum_{v}  \mathcal{L}^G_{g}(s^{i}_{j,u}, s^{i}_{k,v}) \cdot s^i_{k,v} \, , \quad 1 \le u \le \dim W_{ij} \, .$$
Since by Schur's Lemma a $G$-isomorphism from
$W_{ij}$ to $W_{ik}$ is unique up to a scalar multiplication we can
write this map as a composition
$\psi_{j,k}=c_{jk}\cdot \phi^i_k \circ (\phi^i_j)^{-1}$
where the constant $c_{jk}$ can be chosen real. This implies
$$\mathcal{L}^G_{g}(s^{i}_{j,u},s^{i}_{k,v}) \ = \ \delta_{uv}c_{jk} \, ,$$
where $\delta_{uv}$ denotes Kronecker's delta function. Since any  
$f\in V_i$ can
be written in the form $f=\sum_{j}\sum_{u}\alpha_{j,u}s^{i}_{j,u}$ 
with $\alpha_{j,u} \in \R$, the $G$-invariance of $\mathcal{L}_g^G$
gives
$$\mathcal{L}_{g}^G(f,f) \ = \ \mathcal{L}^G_{g} \big( \sum_{j}\sum_{u}\alpha_{j,u}s_{j,u}^{i},\sum_{j}\sum_{u}\alpha_{j,u}s^{i}_{j,u} \big) 
\ = \ \sum_{u}\mathcal{L}^G_{g} \big( \sum_{j}\alpha_{j,u}s^{i}_{j,1},\sum_{j}\alpha_{j,u}s_{j,1}^i \big) \, ,$$
which is nonnegative since $\mathcal{L}^G_{g}$ is psd on $\mathcal{S}^i$. 
For the other types ({\bf II}  and {\bf III}) the result is implied in the same
 way by taking additionally into 
 account the bookkeeping techniques explained
 in Section~\ref{se:representation} (see also Example \ref{ex:complex} below). 
Therefore the forms $\mathcal{L}^G_{g}$ are psd on $\R[X]$
if and only if the restrictions to each $\mathcal{S}^i$ are psd and the statement follows with Theorem~\ref{eq:SymMom}.
\end{proof}
We also record the following symmetric version of  Putinar's Positivstellensatz,
which follows from dualizing Theorem~\ref{th:putinarsym1}.

\begin{thm}\label{thm:symPutinar}
Let $f,g_1,\ldots,g_m\in\R[X]$ be $G$-invariant, and let the feasible
set $K$ satisfy Assumption~\ref{putinar-assumption}. If $f$ is strictly positive  on $K$, then
$f$ can be written in the form
$$f \ = \ \rho^G \Big( \sum_{i=1}^h q_0^i + 
  \sum_{j=1}^m g_j \sum_{i=1}^h q_j^i \Big)
$$
where $q_j^i$ is a sum of squares of polynomials in $\mathcal{S}^i$,
$1 \le i \le h$,
$1 \le j \le m$, and 
$\rho^G(p) = \frac{1}{|G|} \sum_{\sigma \in G} p^{\sigma}$.
\end{thm}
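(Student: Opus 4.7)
The plan is to derive the symmetric Positivstellensatz from classical Putinar by symmetrizing via the Reynolds operator $\rho^G$ and then refining the resulting SOS decomposition with Schur's Lemma. Since $K$ satisfies Assumption~\ref{putinar-assumption} and $f$ is strictly positive on $K$, the classical Putinar theorem produces sums of squares $s_0,\ldots,s_m \in \R[X]$ with $f = s_0 + \sum_{j=1}^m s_j g_j$. Using that $f$ and each $g_j$ are $G$-invariant and that $\rho^G(s_j g_j) = g_j\,\rho^G(s_j)$, applying $\rho^G$ to both sides gives $f = \rho^G(s_0) + \sum_{j=1}^m g_j\,\rho^G(s_j)$.

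The heart of the proof is to rewrite each $\rho^G(s_j)$ as $\sum_{i=1}^h \rho^G(q_j^i)$ with $q_j^i$ a sum of squares of polynomials in $\myspan(\mathcal{S}^i)$. Write $s_j = \sum_k p_k^2$ and decompose each $p_k = \sum_{i=1}^h p_k^{(i)}$ along the real isotypic decomposition~\eqref{eq:decomp}. Expanding the square yields $p_k^2 = \sum_{i,i'} p_k^{(i)} p_k^{(i')}$. Because real irreducible $G$-modules are self-dual, Schur's Lemma implies $\Hom_G(V_i,V_{i'}) = 0$ for $i \ne i'$, so the product $V_i \cdot V_{i'}$ contains no invariant element and consequently $\rho^G(p_k^{(i)} p_k^{(i')}) = 0$ whenever $i \ne i'$. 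Hence $\rho^G(s_j) = \sum_i \rho^G\bigl(\sum_k (p_k^{(i)})^2\bigr)$.

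To reduce each $\rho^G\bigl((p_k^{(i)})^2\bigr)$ further to an SOS in $\myspan(\mathcal{S}^i)$, expand $p_k^{(i)} = \sum_{l \in J_i} \sum_u \alpha_{k,l,u}\,s^i_{l,u}$ in the $G$-equivariant bases introduced before Theorem~\ref{th:putinarsym1}. The Schur-Lemma computation carried out in that proof — which there established $\mathcal{L}^G_g(s^i_{l,u},s^i_{l',v}) = \delta_{uv}\,c_{l l'}$ for every $G$-invariant functional and every $G$-invariant $g$ — also yields the polynomial identity $\rho^G(s^i_{l,u} s^i_{l',v}) = \delta_{uv}\,\rho^G(s^i_{l,1} s^i_{l',1})$, since $G$-invariant linear functionals separate $\R[X]^G$. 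Setting $r_{k,u}^i := \sum_l \alpha_{k,l,u}\,s^i_{l,1} \in \myspan(\mathcal{S}^i)$, this orthogonality collapses the double sum and gives $\rho^G\bigl((p_k^{(i)})^2\bigr) = \rho^G\bigl(\sum_u (r_{k,u}^i)^2\bigr)$. Assembling the contributions with $q_j^i := \sum_{k,u}(r_{k,u}^i)^2$ then produces the claimed representation.

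The main obstacle is the bookkeeping for real irreducible representations of types \textbf{II} and \textbf{III}, where the transition between $V \otimes \C$ and $V$ recorded in~\eqref{eq:realdecom} must be carried along. As in the corresponding step of the proof of Theorem~\ref{th:putinarsym1}, one carries out the Schur-Lemma argument over $\C$ first and then descends to $\R$ using~\eqref{eq:realdecom}; the orthogonality relations and block-diagonal structure exploited above continue to hold, and the rest of the argument applies verbatim.
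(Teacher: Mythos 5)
Your proof is correct but follows a genuinely different path from the paper's. In the paper the theorem is stated to follow by \emph{dualizing} Theorem~\ref{th:putinarsym1}: the implicit argument is that if a strictly positive $f$ failed to lie in the cone of invariant representations $\rho^G\bigl(\sum_i q_0^i + \sum_j g_j \sum_i q_j^i\bigr)$, a separation argument would yield a $G$-linear functional nonnegative on that cone yet nonpositive at $f$, which by Theorem~\ref{th:putinarsym1} would be integration against a $G$-invariant probability measure on $K$ — a contradiction since $f>0$ on $K$. You instead argue primally: invoke classical Putinar, symmetrize with the Reynolds operator, and then split each invariant SOS $\rho^G(s_j)$ along the isotypic decomposition using the Schur-lemma orthogonality that underlies Theorem~\ref{th:putinarsym1}. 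This has the advantage of producing the block-decomposed certificate explicitly from a standard Putinar certificate and of avoiding the separation step entirely; the cost is that the within-component identity $\rho^G(s^i_{l,u} s^i_{l',v}) = \delta_{uv}\,\rho^G(s^i_{l,1} s^i_{l',1})$ is still extracted from the Schur computation in the proof of Theorem~\ref{th:putinarsym1} (via the observation that $G$-linear functionals separate $\R[X]^G$, as you correctly note), so the two routes share the same representation-theoretic core. One small tightening: the cross-isotypic vanishing $\rho^G(p_k^{(i)} p_k^{(i')})=0$ for $i\neq i'$ is most cleanly seen by noting that multiplication $V_i\otimes V_{i'}\to\R[X]$ is $G$-equivariant and $(V_i\otimes V_{i'})^G\cong\Hom_G(V_i^*,V_{i'})\cong\Hom_G(V_i,V_{i'})=0$, where the middle isomorphism uses that every real representation of a finite group is self-dual (averaged inner product); since this holds uniformly over all three real types, the cross-isotypic step needs no case distinction at all, and only the within-component collapse inherits the type I/II/III bookkeeping from the paper.
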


Putting all this together we obtain the following symmetry-adapted
relaxation scheme. 
For every $k\in \N$ let $\mathcal{B}_{2k}$ be a basis of the set
$\R[X]^G_{\le 2k}$ of $G$-invariant polynomials of degree at most $k$,
with $1 \in \mathcal{B}_{2k}$.
Any $G$-invariant map  $L^G:\;\R[X]^G_{\le 2k}\to  \R$ can be
identified with a finite 
sequence $y = (y_b)_{b \in \mathcal{B}_{2k}}$ 
by setting $y_b:=L^G(b)$ for $b\in\mathcal{B}_{2k}$. 
With regard to the real 
decomposition of $\R[X]_{\le k}$
coming from the truncation of~\eqref{eq:decomp},
construct the sets $\mathcal{S}^{i}_k
=\{s^{i}_1,s^{i}_2,\ldots,s^{i}_{\eta_i}\}\subseteq\mathcal{S}^i$ of the
basis elements of $\mathcal{S}^i$  of degree at most $k$.
Then we define the \emph{symmetry-adapted moment matrix} $M^{G}_k(y)$ by
\begin{equation}
\label{eq:symmmomentmatrix}
M_k^G(y) \ :=\ \bigoplus_{i=1}^hM_{k,i}^G(y), \text{ where } M^G_{k,i}(y)_{v,w} \ := \ \mathcal{L}_{g_0}^G(s^{i}_v,s^{i}_w) \ = \ \mathcal{L}_{1}^G(s^{i}_v,s^{i}_w) \, .
\end{equation}
The entries of $M^G_k(y)$ are linear combinations of the elements of $y$ 
and hence indexed by elements in $\mathcal{B}_{2k}$.
Defining the \emph{symmetry-adapted localizing matrices} in a similar manner,
we obtain the symmetry-adapted relaxation for 
$k \ge k_0 := \max \{ \lceil \deg f/2 \rceil, 
  \lceil \deg g_1/2 \rceil,$ \ldots,$\lceil \deg g_m / 2 \rceil \}$,
\begin{equation}\label{eq:lasserresym}
 Q^G_k:\quad\begin{array}{rcl}
  \inf_y\,L^G(f)\\
 M_k^G(y) & \succeq & 0 \, , \\
 M_{k - \lceil \deg g_j / 2 \rceil}^G(g_j \, y) & \succeq & 0 \, , \quad
1 \le j \le m \, , \\
 y_1 & = & 1 \, ,
 \end{array}
\end{equation}
with optimal value denoted by $\inf Q^G_k$ (and $\min Q^G_k$ if the infimum
is attained).
\begin{remark}\label{re:size}\emph{Computational aspects:}
The symmetry-adapted setting defined above can give a significant reduction 
compared to the original relaxation scheme. 
Indeed the number of variables involved equals the size of $\mathcal{B}_{2k}$.
Furthermore the symmetry-adapted moment matrix is block diagonal with blocks
of sizes $\eta_1, \ldots, \eta_h$. 

If the irreducible representations of a given group are known 
then isotypic decompositions and therefore the basis polynomials $s_j^i$ can 
be algorithmically computed using projections (see, \cite[Prop.~8]{serre-b77}).

With regard to determining the irreducible components, there
are theoretical methods whose computational complexity is bounded by a
polynomial in $|G|$ (see \cite{babai-ronyai-90}). 
For practical purposes, the size $|G|$ of the group might not
be the appropriate measure of complexity, and indeed, there are
practical methods (see \cite{dabbaghian-2005,dixon-93})
which are implemented in the group-theoretic software GAP \cite{GAP}.

Note that all these pre-computations have to be done only once for a 
specific group and relaxation order.
\end{remark}

In this setting, Proposition
\ref{pr:lasserreconv} can be reformulated as follows.
\begin{thm}\label{thm:symscheme}
Let $f, g_1, \ldots, g_m \ \R[X]$ be $G$-invariant, 
let Assumption \ref{putinar-assumption} holds for the feasible set $K$,
and let
$(Q_k^G)_{k\geq k_0}$ be the hierarchy~\eqref{eq:lasserresym} of
symmetry-adapted SDP-relax\-ations.
Then $(\inf Q^G_k)_{k\geq k_0}$ is a monotonically non-decreasing sequence
that converges to $f^*$.

\end{thm}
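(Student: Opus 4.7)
The plan is to reduce the convergence statement for the symmetry-adapted hierarchy $(Q_k^G)$ directly to Proposition~\ref{pr:lasserreconv} for the classical Lasserre hierarchy $(Q_k)$, by identifying $Q_k^G$ with the restriction of $Q_k$ to $G$-invariant moment sequences. The two main ingredients will be the truncated version of Theorem~\ref{th:putinarsym1} and the symmetrization operator $\rho^G(p) = \frac{1}{|G|}\sum_{\sigma \in G} p^\sigma$.

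\emph{Monotonicity and the trivial bound.} First I would note that any $y$ feasible for $Q_{k+1}^G$ restricts to a feasible $y$ for $Q_k^G$ (since $\mathcal{B}_{2k} \subseteq \mathcal{B}_{2k+2}$ and each truncated block appears as a principal submatrix), and $L^G(f)$ depends only on $(y_b)$ with $\deg b \le 2k_0$, giving $\inf Q_k^G \le \inf Q_{k+1}^G$. For the inequality $\inf Q_k^G \le f^*$, since $K$ is compact the infimum is attained at some $x^* \in K$; set $\mu^* := \frac{1}{|G|}\sum_{\sigma \in G} \delta_{x^*}^{\sigma} \in \mathcal{P}(K)^G$ and $y_b := \int b \, d\mu^*$. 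Theorem~\ref{eq:SymMom} (or directly Theorem~\ref{th:putinarsym1}) yields feasibility of $y$ for $Q_k^G$, and $G$-invariance of $f$ gives $L^G(f) = \int f \, d\mu^* = f^*$.

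\emph{Lower bound via extension.} The key step is to show $\inf Q_k^G \ge \inf Q_k$. Given $y$ feasible for $Q_k^G$, the corresponding $G$-linear map $L^G$ on $\R[X]^G_{\le 2k}$ extends to a linear map $\tilde L$ on $\R[X]_{\le 2k}$ by $\tilde L(p) := L^G(\rho^G(p))$. I claim that the truncated moment matrix $M_k(\tilde y)$ and localizing matrices $M_{k-\lceil \deg g_j/2\rceil}(g_j\,\tilde y)$ are psd; granted this, $\tilde y$ is feasible for $Q_k$, and $G$-invariance of $f$ yields $\tilde L(f) = L^G(f)$, proving the inequality. Combined with Proposition~\ref{pr:lasserreconv}, this chain $f^* \ge \inf Q_k^G \ge \inf Q_k \uparrow f^*$ forces convergence.

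\emph{Main obstacle.} The crux is verifying the psd-ness claim above, which is the truncated analogue of Theorem~\ref{th:putinarsym1}. The argument from the proof of Theorem~\ref{th:putinarsym1} must be run inside the finite-dimensional $G$-module $\R[X]_{\le k}$ equipped with the bilinear forms $\mathcal{L}_{g_j}^G$ defined via $\tilde L$. Concretely, I would truncate the decomposition~\eqref{eq:decomp} to degree $\le k$, observe that the off-diagonal blocks between distinct isotypic (and distinct type-\textbf{II}/\textbf{III} complex-conjugate) components vanish by Schur's Lemma, and invoke the block-structure computation from Theorem~\ref{th:putinarsym1} on each isotypic component to conclude that the global Gram matrix of $\mathcal{L}_{g_j}^G$ in any basis of $\R[X]_{\le k}$ is similar to a block-diagonal matrix whose blocks are (positive scalar multiples of) copies of $M_{k,i}^G(y)$ and its localizing analogues. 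The point requiring care is that the basis elements $\mathcal{S}_k^i \subseteq \mathcal{S}^i$ of degree at most $k$ indeed span the degree-$\le k$ part of each isotypic component, so that restricting to $\mathcal{S}_k^i$ loses no information; this is ensured because the $G$-action preserves degree and the isomorphisms $\phi_j^i$ can be chosen degree-preserving. Once this truncated block-decomposition is in place, everything reduces to the classical convergence in Proposition~\ref{pr:lasserreconv}.
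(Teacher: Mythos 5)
Your proposal is correct and follows essentially the same sandwich argument as the paper: $\inf Q_k \le \inf Q_k^G \le f^*$ combined with Proposition~\ref{pr:lasserreconv}, with the upper bound via symmetrizing a measure on $K$ and the lower bound via embedding the feasible set of $Q_k^G$ into that of $Q_k$. The only difference is that you spell out the truncated block-diagonalization argument underlying the inequality $\inf Q_k^G \ge \inf Q_k$, which the paper disposes of in a single sentence by appealing to $\mathcal{P}(K)^G \subseteq \mathcal{P}(K)$; your elaboration, noting that the degree-preserving $G$-action makes the isotypic decomposition graded so that the Schur-lemma argument from Theorem~\ref{th:putinarsym1} survives truncation, is exactly what is implicitly used.
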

\begin{proof}
As $\mathcal{P}(K)^{G}\subseteq\mathcal{P}(K)$ one has $\inf Q_k^G\geq \inf Q_k$ for all $k\geq k_0$.
In addition, for any measure $\mu$ on $K$ we let $\mu^{\#} \ = \ \frac{1}{\left| G \right|} \sum_{\sigma \in G} \mu^{\sigma}$. As $K$ is $G$-invariant,
$\mu^{\#}$ is also supported on $K$.
This proves that $\inf Q_k^G\leq f^*$ for all $k\geq k_0$, and so,
$\inf Q_k\leq \inf Q^G_k\leq f^*$ for all $k\geq k_0$. Combining
the latter with Proposition \ref{pr:lasserreconv} yields the desired
result.
\end{proof}
\begin{remark} If -- more general than the setting considered in this article --
not all $g_j$ are $G$-invariant but the set $K$ is $G$-invariant or even if just the set of optimal values is invariant, it is still possible to only look at invariant moments. However the above block structure will 
in general only apply for the moment matrix and the localizing matrices for the $G$-invariant polynomials. Note, however, that the variables in the localizing matrices still correspond to a basis for the space of $G$-invariants.
\end{remark}

The general setting presented in this section leads to the question of handling the
hierarchy of symmetry-adapted bases for the hierarchy of vector spaces of polynomials.
Before studying in detail the symmetric group in the next section,
as a warm up, it is worth to revisit Example \ref{ex:cyclic}.

\begin{example}\label{ex:complex}
We continue Example~\ref{ex:cyclic}.
Since the cyclic group $C_4$ is abelian, 
all the irreducible representations are one-dimensional and correspond to the 
fourth roots of unity. Consider the symmetry-adapted relaxation for $k=1$.
Using the methods from Remark~\ref{re:size} we find
$\R[X]_1\otimes\C=\bigoplus_{l=1}^4 V_{l}$, where
$V_{l}= \myspan \{ \sum_{j=1}^{4}\omega_l^j X_j\}$ and $\omega_1, \ldots, \omega_4$ denote the fourth roots of unity.
Thus, with regard to the symmetry-adapted moment matrix~\eqref{eq:symmmomentmatrix}, we obtain
$\mathcal{S}^{2l-1}_1 = \myspan \{\sum_{j=1}^{4}(\omega_l^j +\overline{\omega_l}^j) X_j\}$ and
$\mathcal{S}^{2l}_1 = \myspan \{\frac{1}{\mathrm{i}}\sum_{j=1}^{4}(\omega_l^j -\overline{\omega_l}^j) X_j\}$, where $1 \le l \le 2$.
With the notation of Example \ref{ex:cyclic} the truncated 
symmetry-adapted moment matrix is
$$\left( \begin {array}{ccccc} 1&2\,y_{1}&0&0&0\\\noalign{\medskip}2\,y_{1}&y_{2}+2\,y_{3}
+y_{4}&0&0&0\\\noalign{\medskip}0&0&y_{2}-y_{4}&0&0\\\noalign{\medskip}0&0&0&y_{2}-2\,
y_{3}+y_{4}&0\\\noalign{\medskip}0&0&0&0&y_{2}-y_{4}\end {array} \right) \, . 
$$
We get 4 diagonal blocks (of which 3 are elementary) and so we end up with a $2\times 2$ semidefiniteness
constraint instead of a $5\times 5$ one if symmetry is not exploited. 

\end{example}

\section{Optimizing with symmetric Polynomials}\label{se:sympol}

In this section, we provide several techniques to exploit symmetries for the symmetric group $\mathcal{S}_n$.
While the representation theory of the symmetric group is a classical topic (as reviewed in
Subsection~\ref{se:prelimsymmgroup}), it yields some interesting (even somewhat surprising) results
in our setting.

First, in Section~\ref{se:momentmatrixsymmetric} we discuss the 
symmetry-adapted relaxation for the symmetric
group. By realizing the irreducible components in a suitable basis of polynomials
(generalized Specht polynomials as defined below),
the moment matrix can be characterized rather explicitly (Theorem \ref{thm:symmom}).
As corollaries, we derive some concrete representation theorems for symmetric polynomials in
Section~\ref{se:representationtheorems}.

\subsection{Moment matrices for the symmetric group\label{se:momentmatrixsymmetric}}
Recall from the preliminaries that the irreducible representations of $\mathcal{S}_n$ are in natural bijection with the partitions of $n$. In order to construct a suitable generalized moment matrix we will need a graded decomposition of the vector space $\R[X]$ into $\mathcal{S}_n$-irreducible components. A classical construction of Specht gives a realization of the Specht modules as polynomials (see \cite{specht-1933}):

For $\lambda\vdash n$ let $t$ be a $\lambda$-tableau. To $t$
we associate the monomial $X^{t}:=\prod_{i=1}^{n}X_i^{l(i)-1}$, 
where $l(i)$ is the index of the row of $t$ containing~$i$.
Denote by $\mathcal{C}_1,\ldots,\mathcal{C}_{\nu}$ the columns of $t$
and by $\mathcal{C}_j(i)$ the element in the $i$-th row of the 
column $\mathcal{C}_j$. Then we associate to each column $\mathcal{C}_j$
a Vandermonde determinant 
$$\Van_{\mathcal{C}_{j}} \ := \ \det
\left(
\begin{array}{ccc}
X_{ \mathcal{C}_j(1)}^0& \ldots  &X_{\mathcal{C}_j(r_j)}^0   \\
 \vdots&  \ddots &\vdots   \\
X_{ \mathcal{C}_j(1)}^{r_j-1}& \ldots  &X_{\mathcal{C}_j(r_j)}^{r_j-1}
\end{array}
\right) \ = \ \prod_{1 \le i<l \le  r_j}(X_{\mathcal{C}_j(l)}-X_{\mathcal{C}_j(i)}) \,$$
where $r_j$ denotes the number of 
rows of $\mathcal{C}_j$.

The \emph{Specht polynomial} $s_{t}$ associated to $t$ is defined as
\[
  s_{t} \ := \ \prod_{j=1}^{\nu} \Van_{\mathcal{C}_{j}}
  \ = \
 \sum_{\sigma\in \CStab_{t}}\sgn(\sigma)\sigma(X^{t}) \, ,
\]
where $\CStab_{t}$ is the column stabilizer of $t$
as introduced in Section~\ref{se:prelimsymmgroup}.
The polynomials
$\{ s_{t} \, : \, t 
  \text{ standard Young tableau to $\lambda$} \}$
are called the \emph{Specht polynomials} associated to $\lambda$.

Note that for any $\lambda$-tabloid $\{t\}$ the monomial 
$X^{t}$ is well defined,
and the mapping $\{t\} \mapsto X^{t}$ is an 
$\mathcal{S}_n$-invariant mapping. Thus
$\mathcal{S}_n$ operates on $s_{t}$ in the same way as on the polytabloid $e_{t}$. This observation implies (see \cite{specht-1933}):

\begin{lemma} \label{pr:spechtpoly}
For any partition $\lambda \vdash n$,
the Specht polynomials associated to $\lambda$
span an 
$\mathcal{S}_n$-submodule of $\R[X]$ which is isomorphic to the Specht module $S^{\lambda}$.
\end{lemma}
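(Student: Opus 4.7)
The strategy is to exhibit an $\mathcal{S}_n$-equivariant linear map from the permutation module $M^\lambda$ into $\R[X]$ that carries polytabloids to Specht polynomials, and then apply Schur's Lemma.

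First I would define the map
\[
  \Phi \: : \: M^\lambda \ \longrightarrow \ \R[X], \qquad \{t\} \ \longmapsto \ X^t,
\]
extended linearly. To see that $\Phi$ is well-defined on tabloids, I would note that $X^t = \prod_{i=1}^n X_i^{l(i)-1}$ only records, for each $i\in\{1,\ldots,n\}$, the row index of $i$ in $t$. This datum is manifestly invariant under reordering entries within rows, hence constant on each row-equivalence class.

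Next I would check $\mathcal{S}_n$-equivariance. For $\sigma \in \mathcal{S}_n$, the tabloid $\sigma\{t\}$ is obtained by replacing every entry $i$ by $\sigma(i)$, so the row of $j$ in $\sigma t$ equals the row of $\sigma^{-1}(j)$ in $t$. With the convention $\sigma(X_i)=X_{\sigma(i)}$ a direct rewriting yields $\sigma(X^t) = X^{\sigma t}$, so $\Phi(\sigma\{t\}) = \sigma\,\Phi(\{t\})$. Using the definition~\eqref{eq:polytabloid} of the polytabloid together with the identity $s_t = \sum_{\sigma \in \CStab_t} \sgn(\sigma)\,\sigma(X^t)$ recorded just before the lemma, equivariance gives
\[
  \Phi(e_t) \ = \ \sum_{\sigma \in \CStab_t} \sgn(\sigma)\,\sigma(X^t) \ = \ s_t.
\]
Since $S^\lambda$ is by definition the $\mathcal{S}_n$-submodule of $M^\lambda$ spanned by the polytabloids, it follows that $\Phi(S^\lambda)$ is the linear span of the Specht polynomials, and this span is automatically $\mathcal{S}_n$-invariant as the image of a $G$-module under a $G$-homomorphism.

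Finally I would invoke Schur's Lemma~\ref{le:Schur}: the restriction $\Phi|_{S^\lambda} : S^\lambda \to \R[X]$ is a homomorphism from an irreducible module, so it is either zero or an isomorphism onto its image. To rule out the zero map, it suffices to exhibit a single nonzero Specht polynomial; for any standard Young tableau $t$ the columns of $t$ involve disjoint sets of variables, so $s_t = \prod_j \Van_{\mathcal{C}_j}$ is a product of nonzero Vandermonde determinants in disjoint variables and hence nonzero in $\R[X]$. Thus $\Phi|_{S^\lambda}$ is an $\mathcal{S}_n$-module isomorphism onto $\myspan\{s_t\}$, which is exactly the claim. I do not anticipate a serious technical obstacle here: the proof is essentially the bookkeeping of checking that the assignment $\{t\}\mapsto X^t$ descends to tabloids and intertwines the group actions; the only place a subtlety could hide is the nonvanishing argument, and this is handled at once by the product-of-Vandermondes description of $s_t$.
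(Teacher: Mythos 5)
Your proof is correct and follows essentially the same route as the paper: the paper only notes that $\{t\}\mapsto X^t$ is well-defined on tabloids and $\mathcal{S}_n$-equivariant, and cites Specht's original article for the conclusion, whereas you fill in the remaining details (equivariance computation, $\Phi(e_t)=s_t$, nonvanishing of $s_t$ via the product of Vandermondes in disjoint variables, and the Schur-type argument that a nonzero $G$-map out of an irreducible is injective). The only cosmetic remark is that the version of Schur's Lemma stated in the paper assumes both source and target irreducible, whereas you only need the ``kernel of a map from an irreducible is trivial or everything'' half; your phrasing makes clear this is what you use, so the argument is sound.
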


While Lemma~\ref{pr:spechtpoly}
already gives a realization of the Specht modules in terms of
polynomials, for the symmetry-adapted moment matrix we need
to generalize this construction to realize these modules in terms of polynomials
with prescribed exponent vectors.
In the following, let $n\in \N$ and $\beta:=(\beta_1,\ldots,\beta_n)$ be an $n$-tuple
of non-negative integers, and set
$\R\{X^\beta\} := \myspan \{ \sigma(X^{\beta}) \, : \, \sigma \in \mathcal{S}_n\}$.
By construction, $\R\{X^\beta\}$ is closed under
the action of $\mathcal{S}_n$ and therefore has the structure of an $\mathcal{S}_n$-module.

Denote by $\wt(\beta)=\sum_{i=1}^{n}\beta_i$ the \emph{weight} of $\beta$.
Let $b_1, \ldots, b_{\ell}$ be
the distinct components of $\beta$ (called the \emph{parts} of $\beta$),
ordered (decreasingly) according to
the multiplicity of the occurrence in $\beta$. Further let
$I_j = \{ i \in \{1, \ldots, n\} \, : \, \beta_i =b_j \}$, $1 \le j \le \ell$,
and note that the sets 
$I_1, \ldots, I_\ell$
define a partition of $\{1, \ldots, n\}$.
Setting $\mu_j:=|I_j|$, the vector $\mu=(\mu_1,\ldots,\mu_\ell)$ consists of monotonically
decreasing components and thus defines a partition of $n$. We call $\mu\vdash n$ the \emph{shape} of $\beta$.

\begin{lemma}\label{le:M}
For $\beta \in \N_0^n$, the
$\mathcal{S}_n$-module $\R\{X^\beta\}$ is isomorphic to the permutation module $M^{\mu}$, where $\mu$ is the shape of $\beta$.
\end{lemma}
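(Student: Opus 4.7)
The plan is to set up an explicit $\mathcal{S}_n$-equivariant bijection between a natural basis of $\R\{X^\beta\}$ and the set of $\mu$-tabloids, then extend it linearly.

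First, I would identify a canonical basis of $\R\{X^\beta\}$: the distinct monomials in the $\mathcal{S}_n$-orbit of $X^\beta$. Two permutations $\sigma,\tau \in \mathcal{S}_n$ satisfy $\sigma(X^\beta)=\tau(X^\beta)$ if and only if $\tau^{-1}\sigma$ permutes, for each $j$, the positions where the exponent equals $b_j$; in other words, if and only if for every $j \in \{1,\ldots,\ell\}$ the sets $\sigma(I_j)$ and $\tau(I_j)$ coincide. Hence the distinct monomials in the orbit are parametrized by ordered $\ell$-tuples $(J_1,\ldots,J_\ell)$ of pairwise disjoint subsets of $\{1,\ldots,n\}$ with $|J_j|=\mu_j$ and $\bigcup_j J_j = \{1,\ldots,n\}$.

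Next I would recall that a $\mu$-tabloid is precisely such an ordered set partition, since rows are unordered within but the sequence of rows is fixed by the shape. This yields a bijection
\[
\Phi: \{\sigma(X^\beta) : \sigma \in \mathcal{S}_n\} \longrightarrow \{\mu\text{-tabloids}\}, \qquad \sigma(X^\beta) \longmapsto \{t_\sigma\},
\]
where $\{t_\sigma\}$ is the $\mu$-tabloid whose $j$-th row consists of the indices $\sigma(I_j)$, i.e.\ the positions in $\sigma(X^\beta)$ carrying exponent $b_j$. Extending $\Phi$ by linearity gives a vector space isomorphism $\R\{X^\beta\} \to M^\mu$, since both sides are free on the bijecting sets.

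Finally I would verify that $\Phi$ is $\mathcal{S}_n$-equivariant. For any $\pi \in \mathcal{S}_n$, the action on monomials sends $\sigma(X^\beta)$ to $(\pi\sigma)(X^\beta)$, whose $j$-th exponent-class is $(\pi\sigma)(I_j) = \pi(\sigma(I_j))$; on tabloids, the action sends $\{t_\sigma\}$ to $\{t_{\pi\sigma}\}$, whose $j$-th row is $\pi$ applied entrywise to $\sigma(I_j)$. These match, proving $\Phi(\pi \cdot \sigma(X^\beta)) = \pi \cdot \Phi(\sigma(X^\beta))$.

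No step is genuinely hard here; the only point requiring a little care is the identification of the stabilizer of $X^\beta$ under the $\mathcal{S}_n$-action, which is exactly the Young subgroup $\mathcal{S}_{I_1} \times \cdots \times \mathcal{S}_{I_\ell}$, making the orbit-stabilizer count agree with $|\mathcal{S}_n|/(\mu_1! \cdots \mu_\ell!)$, the cardinality of the set of $\mu$-tabloids.
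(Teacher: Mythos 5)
Your proof is correct and follows essentially the same route as the paper: both construct the explicit linear map sending each monomial $X^\gamma$ in the orbit of $X^\beta$ to the $\mu$-tabloid whose rows are the exponent-level sets $I_1,\ldots,I_\ell$ of $\gamma$, and observe that this commutes with the $\mathcal{S}_n$-action. The paper simply asserts equivariance in one line, whereas you spell it out and additionally confirm the dimension count via orbit--stabilizer; both are minor elaborations of the same argument.
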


\begin{proof}
We construct an isomorphism from $\R\{X^\beta\}$ to $M^{\mu}$ by defining
it on the basis elements.

First observe that $\R\{X^{\beta}\} = \myspan \{ X^{\gamma} \, : \, \gamma \text{ permutation of } \beta \}$. For a fixed 
permutation $\gamma$ of $\beta$, consider the 
set partition $I_1,\ldots,I_{\ell}$ associated to $\gamma$,
and map $X^{\gamma}$ to the $\mu$-tabloid with rows $I_1, \ldots, I_{\ell}$.
Since this mapping commutes with the action of $\mathcal{S}_n$,
and since by Definition~\ref{def:Permutationmodule} the $\mathcal{S}_n$-module 
$M^{\mu}$ is spanned by 
the set of all $\mu$-tabloids, the statement follows.
\end{proof}

Now let $\lambda\vdash n$ be another partition of $n$. In order to construct the realizations of the Specht module $S^{\mu}$ as submodules of $\R\{X^\beta\}$, we look at pairs $(t, T)$, where $t$ is a fixed $\lambda$-tableau and $T$ is a generalized Young tableau with shape $\lambda$ and content $\mu$.
For each  pair we construct a monomial
$X^{(t,T)} \in \R\{X^{\beta}\}$ from its parts
$b_1,\ldots, b_\ell$ in the following way.
As before, let $\CC_1,\ldots, \CC_\nu$ be the columns of  $t$,
and denote by $T(i,j)$ the element in the $i$-th row and $j$-th column 
of $T$.
Then define 
$$X^{(t,T)} \ := \ \prod_{(i,j)}X_{\CC_j(i)}^{b_{T(i,j)}} \, ,$$
and associate to each column $\CC_j$ a polynomial
\begin{equation}
\label{eq:vandermonde2}
\Van_{\mathcal{C}_{j},T} \ := \ \det
\left(
\begin{array}{ccc}
X_{ \mathcal{C}_j(1)}^{b_{T(1,j)}}& \ldots  &X_{\mathcal{C}_j(k)}^{b_{T(1,j)}}   \\
\vdots&  \ddots &\vdots   \\
X_{ \mathcal{C}_j(1)}^{b_{T(k,j)}}& \ldots  &X_{\mathcal{C}_j(k)}^{b_{T(k,j)}}
\end{array}
\right) \, .
\end{equation}
As in Specht's construction we form the product polynomial 
$s_{(t,T)} \ := \ \prod_{j=1}^{\nu}\Van_{\mathcal{C}_{j},T}$,
and set (by summation over the row equivalence class $\{T\}$ of $T$)
$S_{(t,T)} \ :=\ \sum_{S\in\{T\}}s_{(t,S)} \, .$

\begin{lemma} \label{le:Spechtoccurrence}
Let $\lambda \vdash n$, $\beta \in \N_0^n$, and
$\mu$ be the shape of $\beta$. Further let
$t$ be a $\lambda$-tableau and
$T$ be a generalized Young tableau with shape $\lambda$ and content $\mu$.
The $\mathcal{S}_n$-submodule $\R\{S_{(t,T)}\}$ of $\R\{X^{\beta}\}$
generated by the generalized Specht polynomial $S_{(t,T)}$ 
is isomorphic to the Specht module $S^{\lambda}$.
\end{lemma}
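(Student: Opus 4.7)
The plan is to construct an $\mathcal{S}_n$-equivariant linear map $\Phi_T \colon S^\lambda \to \R\{X^\beta\}$ whose image is exactly $\R\{S_{(t,T)}\}$, and then invoke the irreducibility of $S^\lambda$ together with Schur's Lemma to conclude.

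I first construct a map on the permutation module $M^\lambda$ by setting on the basis of $\lambda$-tabloids
\[
 \Phi_T(\{t\}) \ := \ \sum_{S \in \{T\}} X^{(t,S)} \, .
\]
The first technical step is to check that this expression depends only on the tabloid $\{t\}$ and not on the chosen representative $t$. If $\rho$ permutes the entries within the rows of $t$, then it induces row by row a permutation $\rho_i$ of the columns containing row $i$, and a direct rewriting of $X^{(\rho t, S)}$ yields $X^{(\rho t, S)} = X^{(t, \rho \cdot S)}$, where $\rho \cdot S$ denotes the corresponding row-wise reindexing of $S$ by the $\rho_i$'s. Since $\rho\cdot$ permutes the row-equivalence class $\{T\}$ bijectively, summation over $S \sim T$ absorbs this dependence, so $\Phi_T$ is well-defined on $M^\lambda$. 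The $\mathcal{S}_n$-equivariance $\sigma \Phi_T(\{t\}) = \Phi_T(\sigma \{t\})$ is then immediate from $\sigma(X^{(t,S)}) = X^{(\sigma t, S)}$.

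The second step is to verify the identity $\Phi_T(e_t) = S_{(t,T)}$. Expanding the polytabloid via~\eqref{eq:polytabloid} and using the analogous identity $X^{(\tau t, S)} = X^{(t, \sigma_\tau \cdot S)}$ for $\tau \in \CStab_t$ (where $\sigma_\tau$ is the column-wise action on generalized tableaux induced by $\tau$) produces an alternating sum indexed by $\CStab_t$. The same alternating sum arises from expanding each Vandermonde determinant in~\eqref{eq:vandermonde2} and taking the product over columns; combined with the summation $\sum_{S \sim T}$ in the definition of $S_{(t,T)}$, the two sides match term by term. Since $S^\lambda \subseteq M^\lambda$ is generated as an $\mathcal{S}_n$-module by the polytabloids $e_t$, the restriction $\Phi_T|_{S^\lambda}$ surjects onto the $\mathcal{S}_n$-submodule $\R\{S_{(t,T)}\}$ of $\R\{X^\beta\}$. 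As $S^\lambda$ is irreducible, Schur's Lemma~\ref{le:Schur} forces this restriction to be either zero or injective, so in the non-trivial case $S_{(t,T)} \neq 0$ one obtains $\R\{S_{(t,T)}\} \cong S^\lambda$. The hardest part of this plan is the combinatorial bookkeeping in the identification $\Phi_T(e_t) = S_{(t,T)}$: summing $s_{(t,S)}$ over $S \in \{T\}$ is precisely what makes $\Phi_T$ descend from tableaux to tabloids, thereby giving a well-defined map on $M^\lambda$ whose restriction isolates the desired realization of $S^\lambda$ inside $\R\{X^\beta\}$.
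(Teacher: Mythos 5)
Your proof is correct and follows essentially the same route as the paper's: you define the $\mathcal{S}_n$-homomorphism $M^\lambda \to \R\{X^\beta\}$ sending $\{t\} \mapsto \sum_{S\in\{T\}} X^{(t,S)}$ (the paper's $\Theta_{T,t}$, your $\Phi_T$), establish $\Theta_{T,t}(e_t) = S_{(t,T)}$ via the Leibniz expansion of the Vandermonde factors, and conclude by irreducibility of $S^\lambda$ and Schur's Lemma. You are in fact slightly more careful than the paper in two spots: you verify that the defining formula descends to tabloids (the paper waves at "the cyclic structure of $M^\lambda$") and you explicitly flag that the isomorphism requires $S_{(t,T)} \neq 0$ — a hypothesis the lemma leaves implicit, though it is guaranteed in the only case the paper later uses (semistandard $T$).
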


\begin{proof}
By Lemma~\ref{le:M} we can follow Young's decomposition of $M^\mu$. Therefore we associate to every $T$ with shape $\lambda$ and content $\mu$ an $\mathcal{S}_n$-homomorphism $\Theta_{T,t} \, : \,
M^\lambda \to \R\{X^{\beta}\}$, which maps the given $\lambda$-tableau
$\{t\}$ to $\sum_{S\in\{T\}}X^{(t,S)}$
and which naturally extends by the cyclic structure of $M^\lambda$.

Now it suffices to show that the image of 
$S^{\lambda} \subseteq M^{\lambda}$
under $\Theta_{T,t}$
coincides with $\R\{S_{(t,T)}\}$. The image of the
polytabloid $e_{t}$ from~\eqref{eq:polytabloid} is
$$\Theta_{T,t}(e_{t}) \ = \ 
\Theta_{T,t} \Big( \sum_{\sigma\in\CStab_{t}}\sgn(\sigma)\sigma\{t\} \Big) = \ \sum_{\sigma\in\CStab_{t}} \Theta_{T,t} (\sgn(\sigma)\sigma\{t\}) \, . $$ 
Since by the Leibniz expansion of~\eqref{eq:vandermonde2}
we have $s_{(t,T)}=\sum_{\sigma\in \CStab_{t}}\sgn(\sigma)\sigma(X^{(t,T)})$, it follows that 
$\Theta_{T,t}(e_{t}) = S_{(t,T)}$,
which proves the claim.
\end{proof}

\begin{remark}
Note the following connection of the generalized Specht polynomials to the classical Schur polynomials.
For a non-negative vector $\lambda = (\lambda_1, \ldots, \lambda_l)$ the
generalized Vandermonde determinant
\begin{equation}
\label{eq:alternant}
a_{\lambda} \ := \ \det \left( (X_i^{\lambda_j})_{1 \le i, j \le l} \right)
\end{equation}
(as a polynomial in $X_1, \ldots, X_l)$ is called the \emph{alternant} of $\lambda$. Moreover, for a partition $\lambda$
of length $l$ the \emph{Schur function} $s_{\lambda}$ is defined by
$s_{\lambda} \ = \ a_{\lambda+\delta} /a_{\delta}$,
where $\delta := (l-1, l-2, \ldots, 1,0) \in \Z^l$. It is well known that $s_{\lambda}$ is a symmetric polynomial
in $X_1, \ldots, X_l$ (also called a \emph{Schur polynomial}). Hence, the alternant~\eqref{eq:alternant} can be written as
\begin{equation} \label{eq:amu}
a_{\lambda} \ = \ s_{\lambda-\delta} \cdot a_{\delta} \, .
\end{equation}

Now the polynomials $\Van_{\mathcal{C}_{j},T}$ defined above can be seen as the
alternant associated to the numbers $(b_{T(1,j)},\ldots,b_{T(k,j)})$  and thus by~\eqref{eq:amu} as the
product of a Schur polynomial with a classical Vandermonde determinant.
\end{remark}

Let $\mathcal{T}^{0}_{\lambda,\mu}$ denote the set of semi standard generalized Young tableaux of shape $\lambda$ and content $\mu$. To conclude we can summarize the above considerations.

\begin{thm}
\label{th:maindecomp}
Let $\beta \in \N_0^n$ of weight $d$ and shape $\mu = \mu(\beta)$.
Then we have
$$\R\{X^{\beta}\} \ = \ \bigoplus_{\lambda\unrhd \mu}\bigoplus_{T\in\mathcal{T}^{0}_{\lambda,\mu}}\R\{S_{(t_\lambda,T)}\},$$
where $t_\lambda$ denotes the unique $\lambda$-tableau with increasing rows and columns.
The multiplicity of the Specht modules $S^{\lambda}$ in this $\mathcal{S}_n$-module is equal to the Kostka number $K_{\lambda\mu}$.
\end{thm}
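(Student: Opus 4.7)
The plan is to assemble the theorem from the three preceding lemmas together with Young's rule (Proposition \ref{thm:youngrule}). First I would dispose of the multiplicity statement and the global dimension count: by Lemma \ref{le:M} we have an $\mathcal{S}_n$-isomorphism $\R\{X^\beta\} \cong M^\mu$, and by Young's rule $M^\mu \cong \bigoplus_{\lambda\unrhd\mu} K_{\lambda\mu}\,S^\lambda$. Since by definition $K_{\lambda\mu} = |\mathcal{T}^0_{\lambda,\mu}|$, this immediately yields the claim on the multiplicity of $S^\lambda$ as well as the total dimension identity $\dim \R\{X^\beta\} = \sum_{\lambda\unrhd\mu} K_{\lambda\mu}\,\dim S^\lambda$.

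Next, by Lemma \ref{le:Spechtoccurrence}, for every partition $\lambda\unrhd\mu$ and every $T\in\mathcal{T}^0_{\lambda,\mu}$, the submodule $\R\{S_{(t_\lambda,T)}\}\subseteq\R\{X^\beta\}$ is a copy of the Specht module $S^\lambda$. Summing these submodules over $\lambda$ and $T$ produces a subspace of $\R\{X^\beta\}$ whose dimension is at most $\sum_{\lambda\unrhd\mu} K_{\lambda\mu}\,\dim S^\lambda = \dim \R\{X^\beta\}$, with equality precisely when the sum is direct. The whole theorem therefore reduces to establishing that the sum on the right-hand side is an internal direct sum; equality with $\R\{X^\beta\}$ will then follow from the dimension count.

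For directness across different $\lambda$ I would invoke Schur's Lemma (Lemma \ref{le:Schur}): two subspaces isomorphic to non-isomorphic irreducibles meet only in $0$, and in an $\mathcal{S}_n$-module with fixed isotypic decomposition the $S^\lambda$-isotypic components are pairwise disjoint. So everything reduces to fixing $\lambda$ and showing that the $K_{\lambda\mu}$ submodules $\R\{S_{(t_\lambda,T)}\}$, $T\in\mathcal{T}^0_{\lambda,\mu}$, are linearly independent inside the $S^\lambda$-isotypic component. Here I would revisit the homomorphisms $\Theta_{T,t_\lambda}:M^\lambda\to\R\{X^\beta\}$ constructed in the proof of Lemma \ref{le:Spechtoccurrence}, which satisfy $\Theta_{T,t_\lambda}(e_{t_\lambda}) = S_{(t_\lambda,T)}$. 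Each restriction $\Theta_{T,t_\lambda}|_{S^\lambda}$ is nonzero (it sends the polytabloid generator $e_{t_\lambda}$ to the nonzero element $S_{(t_\lambda,T)}$), hence injective by Schur's Lemma and an isomorphism onto its image. Linear independence of the maps $\Theta_{T,t_\lambda}|_{S^\lambda}$ in $\Hom_{\mathcal{S}_n}(S^\lambda,\R\{X^\beta\})$ then forces directness of the images, since any nontrivial relation between their images would pull back to a nontrivial relation between the maps (again using irreducibility of $S^\lambda$).

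The main obstacle, and the only nonroutine ingredient, is thus the semi standard basis theorem underlying Young's rule: the claim that the maps $\Theta_{T,t_\lambda}|_{S^\lambda}$, indexed by $T \in \mathcal{T}^0_{\lambda,\mu}$, are linearly independent in $\Hom_{\mathcal{S}_n}(S^\lambda, M^\mu)$. This is classical and can be cited from standard references such as Sagan~\cite{sagan-2001}, but if a self-contained derivation were desired one would argue via a dominance-type monomial ordering: under a suitable term order the leading monomial of $S_{(t_\lambda,T)}$ is controlled by the positions of the entries in $T$, and the semi standard condition on $T$ ensures that these leading monomials are pairwise distinct, yielding the linear independence. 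Combining this with the reductions above completes the proof.
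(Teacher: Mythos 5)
Your proposal follows essentially the same route as the paper's proof: identify $\R\{X^\beta\}$ with $M^\mu$ via Lemma~\ref{le:M}, apply Young's rule (Proposition~\ref{thm:youngrule}), and transport the resulting decomposition through the homomorphisms $\Theta_{T,t_\lambda}$ from Lemma~\ref{le:Spechtoccurrence}. The only difference is that you spell out the directness of the sum — cross-$\lambda$ directness from disjointness of isotypic components and within-$\lambda$ directness from linear independence of the maps $\Theta_{T,t_\lambda}|_{S^\lambda}$ in $\Hom_{\mathcal{S}_n}(S^\lambda,M^\mu)$ — whereas the paper compresses this into the single phrase ``thus turns this decomposition into the decomposition of $\R\{X^\beta\}$''; you correctly identify the semistandard basis theorem behind Young's rule as the one nontrivial ingredient making that step go through, though the ``pull back a relation'' sentence would be cleaner via the standard identification of the $S^\lambda$-isotypic component with $S^\lambda\otimes\Hom_{\mathcal{S}_n}(S^\lambda,M^\mu)$.
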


\begin{proof}
By Lemma~\ref{le:M} the $\mathcal{S}_n$-module 
$\R\{X^\beta\}$ is isomorphic to $M^{\mu}$.
By Proposition~\ref{thm:youngrule},
the multiplicity of $S^{\lambda}$ in $M^{\mu}$ is $K_{\lambda\mu}$,
corresponding to the decomposition 
$M^{\mu} = \bigoplus_{\lambda\unrhd \mu}$
  $\bigoplus_{T\in\mathcal{T}_{\lambda,\mu}^{0}}
  S^{\lambda}
$
in terms of semi standard Young tableaux. 
Now for $T\in \mathcal{T}^{0}_{\lambda,\mu}$ the $\mathcal{S}_n$-homomorphism
$\Theta_{T,t}$ constructed in the proof of Lemma~\ref{le:Spechtoccurrence}
maps $e_{t_{\lambda}}$ to $S_{(t_{\lambda},T)}$ and thus turns this decomposition
into the decomposition of $\R\{X^{\beta}\}$.
\end{proof}

Based on these results, we can construct the
symmetry-adapted moment matrix of order~$k$.
By~\eqref{eq:symmmomentmatrix} the blocks are labeled by partitions of $n$. In order to construct the block for a fixed $\lambda$ consider the various $\beta=(\beta_{1}, \ldots ,\beta_{n})$ with $\wt(\beta)=k$ and shape $\lambda$. For a given $d$, let $c(\mu, d)$ be the number of $\beta\in\N_0^n$ with $\wt(\beta)=d$ which have shape $\mu$.
The decomposition from Theorem~\ref{th:maindecomp} translates into the moment setting as follows.

\begin{cor}\label{thm:symmom}
For $k\in\N$, the $k$-th symmetry-adapted moment matrix $M_k^G(y)$ is
of the form
$$M_k^G(y) \ = \ \bigoplus_{\lambda\vdash n}M_{k,\lambda}^G(y).$$
Each of the blocks $M_{k,\lambda}^G(y)$ consists of $\kappa_\lambda$ rows and columns, where $\kappa_{\lambda}$ equals
$\sum_{d=0}^{k}c(\mu,d)K_{\lambda\mu}$.

\end{cor}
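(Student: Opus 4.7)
The proof combines the general block-diagonalization principle from Section~\ref{se:symadapt} with the explicit decomposition in Theorem~\ref{th:maindecomp}. In~\eqref{eq:symmmomentmatrix} the symmetry-adapted moment matrix decomposes into one block per isotypic component, and this block is indexed by the set $\mathcal{S}^i$, which contains exactly one basis element $s_{j,1}^i$ per irreducible copy. Consequently the size of a block equals the \emph{multiplicity} of the corresponding irreducible in the ambient polynomial space, not the dimension of the whole isotypic component. Since for $G = \mathcal{S}_n$ the irreducibles are the Specht modules $S^\lambda$ indexed by $\lambda \vdash n$, this yields $M_k^G(y) = \bigoplus_{\lambda \vdash n} M_{k,\lambda}^G(y)$ with $\kappa_\lambda$ equal to the multiplicity of $S^\lambda$ in the truncated polynomial space.

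To compute $\kappa_\lambda$, I would decompose $\R[X]_{\le k}$ into $\mathcal{S}_n$-stable pieces of the form $\R\{X^\beta\}$ by grouping monomials according to total degree and to the $\mathcal{S}_n$-orbit of their exponent vector:
$$\R[X]_{\le k} \ = \ \bigoplus_{d=0}^k \bigoplus_{\beta} \R\{X^{\beta}\},$$
where the inner sum runs over orbit representatives $\beta \in \N_0^n$ with $\wt(\beta) = d$. Theorem~\ref{th:maindecomp} then gives, for each such $\beta$ of shape $\mu$, the isotypic decomposition of $\R\{X^\beta\}$: the Specht module $S^\lambda$ appears with multiplicity $K_{\lambda\mu}$ whenever $\lambda \unrhd \mu$.

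Summing these contributions over all summands yields the block size: for each $d \le k$ and each $\mu \vdash d$, the number of orbit representatives of weight $d$ and shape $\mu$ is $c(\mu, d)$ by definition, and each such representative contributes $K_{\lambda\mu}$ copies of $S^\lambda$. Hence $\kappa_\lambda = \sum_{d=0}^k \sum_{\mu \vdash d} c(\mu, d) K_{\lambda\mu}$, which matches the statement (once the summation over shapes $\mu$ is made explicit).

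The main obstacle is verifying that the abstract isotypic decomposition underlying~\eqref{eq:symmmomentmatrix} is genuinely realized by the generalized Specht polynomials constructed in Section~\ref{se:momentmatrixsymmetric}; this is precisely the content of Lemma~\ref{le:Spechtoccurrence} and Theorem~\ref{th:maindecomp}, which produce an explicit basis vector $S_{(t_\lambda, T)}$ for every copy of $S^\lambda$ appearing in $\R\{X^\beta\}$. With that identification in place, the statement reduces to the counting argument above.
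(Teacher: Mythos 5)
Your proof is correct and follows essentially the same route as the paper: you invoke the general block structure from \eqref{eq:symmmomentmatrix} and Remark~\ref{re:size}, observe that a block size equals the multiplicity of the corresponding Specht module $S^\lambda$ in $\R[X]_{\le k}$, decompose $\R[X]_{\le k}$ into the orbit spaces $\R\{X^\beta\}$, and count multiplicities via Theorem~\ref{th:maindecomp}. Your version is, if anything, a bit more careful than the paper's terse argument, since you make explicit that the inner sum must run over orbit representatives of $\beta$ and that the statement implicitly sums over shapes $\mu$.
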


\begin{proof}
The distinct irreducible representations are indexed by partitions of $n$. Therefore by Remark \ref{re:size} the number of rows (and columns) 
of the block of $M_{k,\lambda}^G(y)$
corresponding to the irreducible component $S^\lambda$ equals the number of
submodules of $\R[X]_{\leq k}$ isomorphic to $S^{\lambda}$. As we have
\begin{equation}
\label{eq:rxk}
\R[X]_{\leq k} \ = \ \bigoplus_{d=0}^{k} \, \bigoplus_{\beta\in\N_0^n\, , \, \wt(\beta)=d}\R\{X^{\beta}\} \, ,
\end{equation}
this number is
$\sum_{d=0}^{k}c(\beta,d)K_{\lambda\mu(\beta)}$
by Theorem~\ref{th:maindecomp}.
\end{proof}
Although the above Corollary is stated only in terms of the symmetry adapted moment matrix, it naturally also translates to the localizing matrices. The only difference concerning the size of the localizing matrices is reflected in a slightly different calculation compared to $\kappa_\lambda$ defined above. In the case of the localizing matrix associated to a polynomial $g$ the summation will only go up to $k - \lceil \deg g / 2 \rceil$.

We obtain the following remarkable consequence.

\begin{thm} \label{th:constantasym}
For all $n\geq 2k$ the symmetry-adapted moment matrix of order $k$ has the same structure, i.e., the same number and sizes of blocks and variables. In particular, up to the computation of the block decomposition the complexity of the
question if a symmetric polynomial of degree $2k$ in $n$ variables is a sum of squares is only depending on $k$.
\end{thm}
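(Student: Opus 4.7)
The plan is to exploit Corollary~\ref{thm:symmom}, which labels the blocks of $M_k^G(y)$ by partitions $\lambda\vdash n$ and expresses the block sizes through Kostka numbers $K_{\lambda\mu}$. I will show three stability statements for $n\ge 2k$: (i) only partitions $\lambda\vdash n$ with $\lambda_1\ge n-k$ index nonzero blocks, and these are in $n$-independent bijection with partitions of integers $\le k$; (ii) the Kostka numbers entering $\kappa_\lambda$ depend only on the ``tails'' $\tilde\lambda=(\lambda_2,\lambda_3,\ldots)$ and $\tilde\mu=(\mu_2,\mu_3,\ldots)$; (iii) the multiset of orbits (hence of shapes $\mu$) arising from $\beta\in\N_0^n$ with $\wt(\beta)\le k$ is also $n$-independent when $n\ge 2k$.

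For (i), note that any $\beta\in\N_0^n$ with $\wt(\beta)=d\le k$ has at most $k$ positive entries, so the value~$0$ appears at least $n-k$ times; hence the shape $\mu(\beta)$ satisfies $\mu_1\ge n-k$. Since $K_{\lambda\mu}\neq 0$ forces $\lambda\unrhd\mu$ (and in particular $\lambda_1\ge\mu_1$), only partitions $\lambda$ with $\lambda_1\ge n-k$ give nontrivial blocks. The map $\lambda\mapsto\tilde\lambda$ sends these bijectively onto partitions of integers $\le k$, and this bijection does not involve $n$.

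For (ii), I use the standard stability of Kostka numbers under enlarging the first row. In any semi-standard Young tableau of shape $\lambda$ and content $\mu$, column strictness forces the $\mu_1$ copies of the entry~$1$ to occupy precisely the first $\mu_1$ cells of the first row. Because $n\ge 2k$ gives $\mu_1\ge n-k\ge k\ge|\tilde\lambda|\ge\tilde\lambda_1$, these $\mu_1$ cells already cover all columns used by rows $2,3,\ldots$, so the skew shape $\lambda/(\mu_1)$ decomposes as a disjoint one-row strip of length $\lambda_1-\mu_1=|\tilde\mu|-|\tilde\lambda|$ together with the straight shape $\tilde\lambda$, which must be filled with content $\tilde\mu$. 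Both the strip length and $\tilde\lambda,\tilde\mu$ depend only on the tails, so $K_{\lambda\mu}$ is a function of $(\tilde\lambda,\tilde\mu)$ alone, independent of $n$.

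For (iii), observe that for $n\ge 2k$ the $\mathcal{S}_n$-orbits of $\beta\in\N_0^n$ with $\wt(\beta)=d\le k$ are in $n$-independent bijection with partitions $\nu\vdash d$ (the multiset of \emph{positive} entries of $\beta$); the shape $\mu(\beta)$ equals $(n-\ell(\nu),\,\mu_2,\mu_3,\ldots)$ where $(\mu_2,\mu_3,\ldots)$ encodes the multiplicities of the parts of $\nu$. Substituting into the formula of Corollary~\ref{thm:symmom} shows that $\kappa_\lambda$ is a sum over such $\nu$ of the stable Kostka numbers from (ii), so $\kappa_\lambda$ is independent of $n$ for $n\ge 2k$. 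The invariant basis $\mathcal{B}_{2k}$ consists of monomial symmetric polynomials $m_\nu$ for partitions $\nu$ of integers $\le 2k$ with at most $n$ parts, and for $n\ge 2k$ the latter constraint is vacuous, so the variable count is $n$-independent as well. The final claim about the SOS-complexity follows because Theorem~\ref{thm:symPutinar} (applied with $m=0$) reduces testing ``$f$ is a sum of squares'' to feasibility of an SDP whose block structure is precisely that of $M_k^G$.

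The main obstacle is step~(ii): all the other ingredients are bookkeeping, but getting the Kostka-stability argument right requires the quantitative inequality $\mu_1\ge\tilde\lambda_1$, which is exactly where the hypothesis $n\ge 2k$ is used, and one has to check that no column constraint is lost when the first row of $1$'s is deleted.
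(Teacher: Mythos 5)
Your proof is correct and follows essentially the same route as the paper: both arguments count the moment variables via partitions of $2k$, use the dominance condition $K_{\lambda\mu}\neq 0\Rightarrow\lambda\unrhd\mu$ to restrict to partitions with long first row, identify blocks across different $n$ by passing to the tails $\tilde\lambda,\tilde\mu$, and invoke the first-row stability $K_{\lambda\mu}=K_{\tilde\lambda\tilde\mu}$ of Kostka numbers. The only substantive difference is that the paper asserts that Kostka stability without justification, whereas your step (ii) supplies a clean combinatorial proof of it (and correctly isolates the inequality $\mu_1\ge\tilde\lambda_1$ as the place where $n\ge 2k$ is actually used), so your write-up is a more complete version of the same argument rather than an alternative one.
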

\begin{proof}
First observe that by Remark \ref{re:size} the number of variables equals the dimension of the $\R$-vector space of symmetric polynomials of degree at most $2k$. Therefore it corresponds to the number of $n$-partitions of $2k$, which is just the number of partitions of $2k$ for all $n\geq 2k$. So we see that the number of variables does not increase in $n$ once $n\geq 2k$.

Now set $n_0 = 2k$ and let $l$ be the number of partitions of $k$, $\beta^{(1)},\ldots,\beta^{(l)} \in \N_0^{n_0}$ the distinct
exponent vectors modulo permutation with $\wt(b^{(i)}) = k$, $1 \le k \le l$,
and $\lambda^{(i)}\vdash n_0$ be the shape of $\beta^{(i)}$. The rest of the proposition follows if we can show that for every $n\geq n_0$ there exist partitions $\tilde{\lambda}^{(1)},\ldots, \tilde{\lambda}^{(m)}$ of $n$ such that $\kappa_{\tilde{\lambda}^{(i)}}=\kappa_{\lambda^{(i)}}$ for all $1\leq i\leq m$ and $\kappa_{\tilde{\lambda}}=0$ for all other $\tilde{\lambda}\vdash n$.

First note $\tilde{\beta}^{(i)}$ that are exponent vectors come from the $\beta^{(i)}$ by adding $n-n_0$ zeros.
As $n\geq n_0\geq 2k$ this implies that the possible $\tilde{\lambda}^{(i)}$ are of the form $\tilde{\lambda}^{(i)}:=(\lambda_1^{(i)}+n-n_0,\lambda_2^{(i)},\ldots,\lambda_t^{(i)})$.
Since $K_{\lambda \mu}=0$ whenever $\mu\not\unrhd\lambda$
we conclude that the possible $\tilde{\mu}$ we have to consider are of the form $\tilde{\mu}:=(\mu_1+n-n_0,\mu_2,\ldots,\mu_t)$ for one $\mu\geq\lambda^{(i)}$. But in this setting we have $K_{\lambda \mu}=K_{\tilde{\lambda} \tilde{\mu}}$ and the statement follows.
\end{proof}

\begin{example} We illustrate the techniques for a small example with
$n=3$ and $k=2$. The moment variables are indexed by partitions of the numbers $1,2,3,4$ with three parts,
i.e., $y_1,y_2,y_3,y_4,y_{11},y_{22},y_{21},y_{111},y_{211}$. The irreducible components are
indexed by the partitions $\lambda \vdash (3)$, thus $\lambda \in \{(3),(2,1),(1,1,1)\}$.
The $\beta$ we have to take into account are $(0,0,0),(1,0,0),(2,0,0),(1,1,0)$ with shape $\mu^{(1)}=(3),\mu^{(2)}=(2,1),\mu^{(3)}=(2,1),\mu^{(4)}=(2,1)$.
The semi standard generalized Young tableaux with shape $\mu$ and content
$\lambda \in \{ \lambda^{(1)}, \ldots, \lambda^{(4)}\}$ from Lemma~\ref{le:Spechtoccurrence}
are:
\begin{itemize}
\item For $\mu=(3)$:
\begin{Young}
    1 & 1 & 1 \cr
    \end{Young} \, , \begin{Young}
    1 & 1 & 2 \cr
    \end{Young} \, .
    \vspace{0.8ex}
\item For $\mu=(2,1)$:
\begin{Young}
    1& 1 \cr
    2 \cr
    \end{Young} \, .
 
\item For $\mu=(1,1,1)$ there is no generalized semi standard Young tableau corresponding to the above
$\lambda^{(i)}$.  
\end{itemize}

For $\mu = (3)$, Corollary \ref{thm:symmom} yields a $4 \times 4$-block, with basis polynomials
$$\{1,X_1+X_2+X_3,X_1^2+X_2^2+X_3^2,X_1X_2+X_1X_3+X_2X_3\} \, .$$

Thus
$$M_{(3)} \ := \ \left( \begin {array}{cccc} 1&3\,y_{1}&3\,y_{2}&3\,
y_{11}\\\noalign{\medskip}3\,y_{1}&3\,y_{2}+6\,
y_{11}&3\,y_{3}+6\,y_{21}&6\,y_{21}+3\,
y_{111}\\\noalign{\medskip}3\,y_{2}&3\,y_{3}+6\,
y_{21}&3\,y_{4}+6\,y_{22}&6\,y_{31}+3\,
y_{211}\\\noalign{\medskip}3\,y_{11}&6\,y_{21}+3\,
y_{111}&6\,y_{31}+3\,y_{211}&3\,y_{22}+6\,
y_{211}\end {array} \right) \, .$$

For $\mu=(2,1)$ we obtain a $3 \times 3$-block, with basis polynomials
\begin{eqnarray*}
&& \{X_3-X_1+X_3-X_2,(X_3-X_1)(X_3+X_1)+(X_3-X_2)(X_3+X_2),\\
&& \quad (X_3-X_1)X_2+(X_3-X_2)X_1\} \\
& = & \{2X_3-X_2-X_1,2X_3^2-X_2^2-X_1^2, -2X_1X_2+X_2X_3+X_3X_1\} \, .
\end{eqnarray*}
Thus
$$M_{(2,1)} \ = \ \left( \begin {array}{ccc} 6\,y_{2}-6\,y_{11}&6\,y_{3}-6\,y_{21}&6\,y_{21}-6\,y_{111}
\\\noalign{\medskip}6\,y_{3}-6\,y_{21}&6\,y_{4}
-6\,y_{22}&-6\,y_{211}+6\,y_{31}
\\\noalign{\medskip}6\,y_{21}-6\,y_{111}&-6\,y_{211}
+6\,y_{31}&6\,y_{22}-6\,y_{211}\end {array}
\right) \, .
$$
Hence we only have to check semidefiniteness of a $4 \times 4$ matrix and a $3 \times 3$ matrix,
instead of semidefiniteness of a single $10 \times 10$  moment matrix.

\end{example}

\begin{remark}
We remark that the techniques presented above also provide the
tools for some explicitly stated open issues  in the study of unconstrained
optimization of symmetric polynomials in Gatermann and
Parrilo \cite[p.~124]{gatermann-parrilo-2004} (who -- mentioning the lack
of explicit formulas for the isotypic components -- refer to
the study of examples and asymptotics).
\end{remark}

\subsection{Sums of squares-representations  for symmetric polynomials}\label{se:representationtheorems}
From the dual point of view,
the results presented in Section~\ref{se:momentmatrixsymmetric}
imply the following sums of squares decomposition theorem:

\begin{thm}\label{thm:darst}

Let $p \in \R[X_1, \ldots, X_n]$ be symmetric and homogeneous of degree $2d$.
If $p$ is a sum of squares then
\[
 p \ \in \ \sum_{\beta}
         \sum_{\lambda\unrhd \mu(\beta)} \sum_{T\in\mathcal{T}^0_{\lambda,\mu(\beta)}}
         \Sigma(\R\{S_{(t_{\lambda},T)}\})^2 \, ,
\]
where $t_{\lambda}$ denotes the unique $\lambda$-tableau with increasing rows and columns, $\beta$ runs over the non-negative partitions of $d$ with $n$ parts,
and $\Sigma(\R\{S_{(t_{\lambda},T)}\})^2$ denotes the sums of squares of polynomials
in the $\mathcal{S}_n$-module $\R\{S_{(t_{\lambda},T)}\}$.
\end{thm}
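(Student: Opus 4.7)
The plan is to dualize the argument used in the proof of Theorem~\ref{th:putinarsym1}, combining a Gram matrix representation of the sum of squares with the explicit decomposition of $\R[X]_d$ given by Theorem~\ref{th:maindecomp}. Since $p$ is homogeneous of degree $2d$ and SOS, one may write $p=\sum_i q_i^2$ with each $q_i$ homogeneous of degree exactly $d$ (any lower-degree parts cancel by homogeneity), or equivalently $p=v^{T}Qv$ for some $Q\succeq 0$, where $v$ is a (column) vector whose entries form a basis of the homogeneous component $\R[X]_d$.

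First I would exploit the $\mathcal{S}_n$-invariance of $p$: replacing $Q$ by its symmetrization $\tilde Q=\frac{1}{n!}\sum_{\sigma\in\mathcal{S}_n}\rho(\sigma)^{T}Q\,\rho(\sigma)$, where $\rho$ is the representation of $\mathcal{S}_n$ on $\R[X]_d$, produces an $\mathcal{S}_n$-equivariant psd matrix that still satisfies $p=v^{T}\tilde Q v$. I would then choose $v$ to consist of the generalized Specht polynomials provided by Theorem~\ref{th:maindecomp}: the entries of $v$ are indexed by quadruples $(\beta,\lambda,T,k)$ with $\wt(\beta)=d$, $\lambda\unrhd\mu(\beta)$, $T\in\mathcal{T}^{0}_{\lambda,\mu(\beta)}$, and $k$ running through an internal basis of the copy $\R\{S_{(t_\lambda,T)}\}$ of $S^{\lambda}$.

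The $\mathcal{S}_n$-equivariance of $\tilde Q$, together with Schur's Lemma, then forces the block structure already exploited in the proof of Theorem~\ref{th:putinarsym1}: cross-terms between non-isomorphic irreducible components vanish, and within the isotypic component for a fixed $\lambda$ the bilinear form has the factored shape $\tilde Q(s^{\lambda,T}_k,s^{\lambda,T'}_{k'})=\delta_{kk'}(C_\lambda)_{(\beta,T),(\beta',T')}$ for a psd multiplicity matrix $C_\lambda$ indexed by the copies $(\beta,T)$. A Cholesky-type factorization of each $C_\lambda$ and the reorganization of $\sum_k\sum_{(\beta,T),(\beta',T')}(C_\lambda)_{(\beta,T),(\beta',T')}s^{\lambda,T}_k s^{\lambda,T'}_k$ as a sum of squares of polynomials built from the generalized Specht modules $\R\{S_{(t_\lambda,T)}\}$ would then yield the asserted form, after summing over $\lambda$.

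The main obstacle I foresee is the careful bookkeeping that translates the Schur-block structure of $\tilde Q$ into genuine SOS expressions whose squared polynomials actually sit inside the prescribed modules $\R\{S_{(t_\lambda,T)}\}$ (rather than more general mixtures across isomorphic copies of $S^{\lambda}$). Resolving this cleanly should exploit the freedom to diagonalize the multiplicity matrix $C_\lambda$ by an orthogonal change of basis on the multiplicity space of the $\lambda$-isotypic component; since this multiplicity space is spanned by the copies $\R\{S_{(t_\lambda,T)}\}$, the diagonalization transports each squared polynomial into one of these concrete submodules, giving the required Minkowski-sum decomposition. The representation-theoretic input is already in place; what remains is the explicit matching with the combinatorial indexing $(\beta,\lambda,T)$ supplied by Theorem~\ref{th:maindecomp}.
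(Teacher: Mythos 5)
Your Gram-matrix-plus-symmetrization-plus-Schur strategy is the natural way to flesh out the paper's one-line proof ("dualize~\eqref{eq:rxk} in connection with Theorem~\ref{th:maindecomp}"), and up through the identification of the block structure $\tilde Q(s^{\lambda,T}_k,s^{\lambda,T'}_{k'})=\delta_{kk'}(C_\lambda)_{T,T'}$ your argument is sound and matches what the dualization must produce. You also correctly identify where the difficulty lies. Unfortunately, the resolution you sketch in the last paragraph does not close the gap. An orthogonal diagonalization $C_\lambda = O D O^{T}$ produces squares of polynomials $\sum_T O_{T,a}\, s^{\lambda,T}_k$, and these lie in \emph{new} irreducible submodules of the $\lambda$-isotypic component obtained by rotating in multiplicity space; they are not contained in any one of the prescribed copies $\R\{S_{(t_\lambda,T)}\}$. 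More concretely, an element of the Minkowski sum $\sum_{T}\Sigma(\R\{S_{(t_\lambda,T)}\})^2$ expands purely in products $s^{\lambda,T}_k s^{\lambda,T}_{k'}$ with the \emph{same} copy index $T$ on both factors, whereas the quadratic form $\sum_k\sum_{T,T'}(C_\lambda)_{T,T'} s^{\lambda,T}_k s^{\lambda,T'}_k$ has genuine cross-copy terms whenever $C_\lambda$ has off-diagonal entries. So the Minkowski sum over the fixed copies is in general a proper subcone of the cone of symmetric SOS forms, and the rotation you invoke changes the copies rather than realigning with them.

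The conclusion your argument \emph{does} support is the weaker (but correct, and consistent with Theorem~\ref{thm:symPutinar}, where $\mathcal{S}^i$ is a transversal across copies rather than a single copy) statement that $p$ lies in $\sum_\lambda \Sigma(V_\lambda^{\mathrm{iso}})^2$ with $V_\lambda^{\mathrm{iso}}$ the $\lambda$-isotypic component of $\R[X]_d$, or equivalently that the squared polynomials may be taken of the form $\sum_{\beta,T}\alpha_{\beta,T}\, S^{k}_{(t_\lambda,T)}$ for fixed internal index $k$. To land exactly on the statement as printed you would additionally need either a reason why the symmetrized Gram matrix can always be taken with each $C_\lambda$ diagonal in the Specht-polynomial basis (which is not true in general), or to read $\Sigma(\R\{S_{(t_\lambda,T)}\})^2$ loosely as allowing cross-copy combinations; the paper's one-line proof does not address this either, so the imprecision is inherited from the source rather than introduced by you, but your proposed fix does not resolve it.
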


\begin{proof}
The statement follows from dualizing~\eqref{eq:rxk} in connection with
Theorem~\ref{th:maindecomp}.
\end{proof}

Hilbert's Theorem (see, e.g., \cite{laurent-2009}) 
characterizes the cases where the
the notion of non-negativity coincides with the sums of squares 
representability. For these cases we obtain the following corollaries which are specialized (and simplified) versions
of SOS decompositions in the symmetric case.

\begin{cor}
Let $p\in \R[X_1,X_2]$ be a symmetric homogeneous form of degree $2d$. If $p$ is non-negative then
\[
p\ \in \ \sum_{\atopfrac{\alpha_1,\alpha_2\in\N_0}{\alpha_1+\alpha_2=d}}\Sigma(\R\{X_1^{\alpha_1}X_2^{\alpha_2}+X_1^{\alpha_2}X_2^{\alpha_1}\})^2+\Sigma(\R\{X_1^{\alpha_1}X_2^{\alpha_2}-X_1^{\alpha_2}X_2^{\alpha_1}\})^2 \, .
\]

\end{cor}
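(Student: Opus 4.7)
The plan is to specialize Theorem \ref{thm:darst} to the case $n=2$. The only extra input needed is one of the classical ``Hilbert cases'': any non-negative binary form of even degree is a sum of squares. Once $p$ is written as a sum of squares, Theorem \ref{thm:darst} places it in
\[
\sum_{\beta} \sum_{\lambda\unrhd\mu(\beta)} \sum_{T\in\mathcal{T}^0_{\lambda,\mu(\beta)}} \Sigma(\R\{S_{(t_\lambda,T)}\})^2,
\]
where $\beta$ runs over non-negative partitions of $d$ with $2$ parts. The bulk of the work is then to show that, for $n=2$, the modules $\R\{S_{(t_\lambda,T)}\}$ are exactly the one-dimensional spans $\R\{X_1^{\alpha_1}X_2^{\alpha_2}\pm X_1^{\alpha_2}X_2^{\alpha_1}\}$ appearing in the statement.

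First I would enumerate partitions. Fix $\beta=(\alpha_1,\alpha_2)$ with $\alpha_1\ge\alpha_2$ and $\alpha_1+\alpha_2=d$. Its shape is $\mu(\beta)=(2)$ if $\alpha_1=\alpha_2$ and $\mu(\beta)=(1,1)$ if $\alpha_1>\alpha_2$. The partitions of $n=2$ that can dominate these shapes are $\lambda\in\{(2),(1,1)\}$, and Kostka numbers give $K_{(2)(2)}=K_{(2)(1,1)}=K_{(1,1)(1,1)}=1$ with $K_{(1,1)(2)}=0$. So the inner sums are short and can be written out by hand.

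Next I would compute the generalized Specht polynomials explicitly. For $\lambda=(2)$, the tableau $t_\lambda$ has a single row $1\ 2$, its column stabilizer is trivial, and the unique semi-standard $T$ of shape $(2)$ and content $\mu(\beta)$ yields $s_{(t_\lambda,T)}=X_1^{\alpha_1}X_2^{\alpha_2}$; summing over the row class $\{T\}$ gives $S_{(t_\lambda,T)}=X_1^{\alpha_1}X_2^{\alpha_2}+X_1^{\alpha_2}X_2^{\alpha_1}$ (which collapses to $2X_1^{\alpha_1}X_2^{\alpha_1}$ when $\alpha_1=\alpha_2$). For $\lambda=(1,1)$, the tableau $t_\lambda$ has one column $\binom{1}{2}$ and column stabilizer $\mathcal{S}_2$, so the definition~\eqref{eq:vandermonde2} produces the $2\times 2$ Vandermonde-type determinant $S_{(t_\lambda,T)}=X_1^{\alpha_1}X_2^{\alpha_2}-X_1^{\alpha_2}X_2^{\alpha_1}$ (which is $0$ when $\alpha_1=\alpha_2$, consistent with the fact that then $\mu(\beta)\not\unrhd (1,1)$ and the summand is absent).

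Finally I would assemble: by Theorem \ref{th:maindecomp} the sum over $\lambda\unrhd\mu(\beta)$ and $T\in\mathcal{T}^0_{\lambda,\mu(\beta)}$ covers exactly the two orthogonal $\mathcal{S}_2$-lines above, and running over all $\beta$ with $\alpha_1+\alpha_2=d$ reproduces the claimed sum. Since every non-negative binary form $p$ of degree $2d$ is a sum of squares by the Hilbert case, Theorem \ref{thm:darst} gives the stated membership. The only real obstacle is the careful bookkeeping in the identification of $\R\{S_{(t_\lambda,T)}\}$ with the symmetric and antisymmetric orbits of $X_1^{\alpha_1}X_2^{\alpha_2}$; once this is done, the corollary is immediate.
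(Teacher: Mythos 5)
Your proposal is correct and follows exactly the route the paper intends: invoke the binary-form Hilbert case to get an SOS representation, apply Theorem~\ref{thm:darst}, and then compute the generalized Specht polynomials for $n=2$, which reduce to the symmetric and antisymmetric combinations $X_1^{\alpha_1}X_2^{\alpha_2}\pm X_1^{\alpha_2}X_2^{\alpha_1}$; the paper leaves exactly this specialization implicit. One small slip: to explain why the $\lambda=(1,1)$ term is absent when $\alpha_1=\alpha_2$ you write ``$\mu(\beta)\not\unrhd(1,1)$'', but the correct condition is $(1,1)\not\unrhd\mu(\beta)=(2)$ (equivalently $K_{(1,1)(2)}=0$, which you do compute correctly), since the sum runs over $\lambda\unrhd\mu$.
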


\begin{cor}
Let $p \in \R[X_1, \ldots, X_n]$ be a symmetric and homogeneous quadratic form.
If $p$ is non-negative then $p$ can be written in the form

\[
 p \ = \ \alpha(X_1 + \cdots + X_n)^2 +
         \beta \sum_{i < j} (X_j - X_i)^2=(\alpha +(n-1)\beta)\sum_i X_i^2+2(\alpha-\beta)\sum_{i < j}X_iX_j
\]
with some coefficients $\alpha, \beta \ge 0$.
\end{cor}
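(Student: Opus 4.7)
The plan is to invoke Theorem~\ref{thm:darst} with $2d=2$ together with a short dimension-count on the space of symmetric quadratic forms. First note that the space of symmetric homogeneous quadratic forms in $X_1,\dots,X_n$ is two-dimensional: the forms $(X_1+\cdots+X_n)^2$ and $\sum_{i<j}(X_j-X_i)^2$ are linearly independent (the first does not vanish at $(1,\dots,1)$, the second does). Hence any symmetric homogeneous quadratic $p$ can be written uniquely as $p=\alpha(X_1+\cdots+X_n)^2+\beta\sum_{i<j}(X_j-X_i)^2$ with $\alpha,\beta\in\R$, and the second equality in the claim is a routine expansion. The task is therefore to show $\alpha,\beta\geq 0$ when $p$ is non-negative.

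Next I would work out the irreducible components appearing in Theorem~\ref{thm:darst} for $d=1$. The only $\beta\in\N_0^n$ of weight $1$ is, up to $\mathcal{S}_n$-action, $\beta=(1,0,\dots,0)$, whose shape is $\mu=(n-1,1)$. The partitions $\lambda\unrhd \mu$ are $\lambda=(n)$ and $\lambda=(n-1,1)$, each occurring with Kostka number $1$. A direct computation of the generalised Specht polynomial $S_{(t_\lambda,T)}$ from Section~\ref{se:momentmatrixsymmetric} gives:
\begin{itemize}
\item for $\lambda=(n)$, the one-dimensional trivial module spanned by $X_1+\cdots+X_n$;
\item for $\lambda=(n-1,1)$, the $(n-1)$-dimensional standard module spanned by the differences $X_i-X_j$.
\end{itemize}

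Now every non-negative quadratic form is a sum of squares (by the spectral theorem, and also as the simplest Hilbert case). Applying Theorem~\ref{thm:darst} writes $p=q_{\mathrm{triv}}+q_{\mathrm{std}}$ where $q_{\mathrm{triv}}\in\Sigma\bigl(\R\{X_1+\cdots+X_n\}\bigr)^2$ and $q_{\mathrm{std}}\in\Sigma\bigl(\R\{X_2-X_1\}\bigr)^2$. By averaging the SOS representatives over $\mathcal{S}_n$ (each contribution lies in an isotypic component that is $\mathcal{S}_n$-stable) we may assume both summands are themselves symmetric. The trivial summand is automatically a non-negative scalar multiple of $(X_1+\cdots+X_n)^2$, giving $\alpha\geq 0$. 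The symmetric PSD form $q_{\mathrm{std}}$ vanishes at $(1,\dots,1)$, since every element of the standard module does; and the only symmetric quadratic form with that vanishing property (up to scalar) is $\sum_{i<j}(X_j-X_i)^2$. Since $\sum_{i<j}(X_j-X_i)^2$ attains positive values and $q_{\mathrm{std}}$ is PSD, the scalar $\beta$ must be non-negative.

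The main obstacle is bookkeeping the generalised Specht-polynomial computation so as to identify the two irreducible modules explicitly. Once this is in hand, the rest is a one-line linear-algebra argument. As a sanity check and fully self-contained shortcut, one may bypass representation theory entirely: existence of $\alpha,\beta\in\R$ with the stated form is the dimension argument above, and evaluating $p$ at $(1,1,\dots,1)$ yields $n^2\alpha\geq 0$ while evaluating at $(1,-1,0,\dots,0)$ yields $2n\beta\geq 0$, establishing the sign conditions directly.
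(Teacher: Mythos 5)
Your proof is correct and follows the route the paper intends: the corollary is stated as a direct consequence of Theorem~\ref{thm:darst} together with the Hilbert case (every PSD quadratic form is SOS), and you correctly identify that for $d=1$ the only relevant $\beta$ is $(1,0,\ldots,0)$ with shape $\mu=(n-1,1)$, producing exactly the trivial module $S^{(n)}$ (spanned by $X_1+\cdots+X_n$) and the standard module $S^{(n-1,1)}$ (spanned by the differences $X_i-X_j$), each with Kostka number~$1$. One small remark: the averaging step is unnecessary --- since $q_{\mathrm{triv}}$ is automatically a scalar multiple of $(X_1+\cdots+X_n)^2$ and hence $\mathcal{S}_n$-invariant, $q_{\mathrm{std}}=p-q_{\mathrm{triv}}$ is symmetric for free --- and the self-contained shortcut you append (evaluate at $(1,\ldots,1)$ and $(1,-1,0,\ldots,0)$ after the dimension count) is a perfectly valid elementary alternative that sidesteps the representation theory altogether, at the cost of not illustrating the mechanism of Theorem~\ref{thm:darst} that the paper wants to showcase.
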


\begin{cor}
Let $p \in \R[X_1, X_2, X_3]$ be a symmetric and homogeneous polynomial of degree~4.
If $p$ is non-negative then $p$ can be written in the form
\[
p \ = \  (\alpha+2\delta)M_{4}+(2\alpha+2\varepsilon+\gamma-\delta)M_{22}+(\beta-\omega)M_{31} +(\beta+2\gamma+2 \omega-2\varepsilon)M_{211},\\
\]
where $M_{4}=\sum_i X_i^4 $, $M_{22}=\sum_{i\neq j} X_i^2X_j^2$, $M_{31}=\sum_{i\neq j} X_i^3X_j$ and $M_{211}=\sum_{i\neq j\neq k} X_i^2 X_j X_k$,\\
such that $\alpha,\gamma,\delta,\varepsilon\geq 0$ and $\alpha\gamma\geq \beta^{2}$ and $\delta\varepsilon\geq \omega^2$.
\end{cor}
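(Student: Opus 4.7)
The plan is to combine Hilbert's theorem for $(n,2d)=(3,4)$ with the symmetry-adapted block structure developed in Section~\ref{se:momentmatrixsymmetric}.

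By Hilbert's classical theorem, any non-negative homogeneous quartic in three variables is a sum of squares of quadratic polynomials. Since $p$ is $\mathcal{S}_3$-invariant, the SOS side of Theorem~\ref{th:putinarsym1} allows us to rewrite this SOS with a PSD block structure: one PSD block per isotypic component of $\R[X_1,X_2,X_3]_2$ as an $\mathcal{S}_3$-module, the size of each block equal to the multiplicity of the corresponding irreducible. I would then work out that decomposition. The module $\R[X_1,X_2,X_3]_2$ is the direct sum of the $\mathcal{S}_3$-spans of $X_1^2$ and of $X_1X_2$; by Lemma~\ref{le:M} both summands are isomorphic to $M^{(2,1)}$, which by Example~\ref{ex:decomp1} decomposes as $S^{(3)}\oplus S^{(2,1)}$. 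Hence $\R[X_1,X_2,X_3]_2\cong 2\,S^{(3)}\oplus 2\,S^{(2,1)}$, so both PSD blocks have size two. Natural generators supplied by Lemma~\ref{le:Spechtoccurrence} are $p_1:=X_1^2+X_2^2+X_3^2$ and $p_2:=X_1X_2+X_1X_3+X_2X_3$ in the trivial isotypic, together with the generalized Specht polynomials $g_1:=X_3^2-X_1^2$ and $g_2:=X_1X_2-X_2X_3$ in the standard isotypic.

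With these choices, every symmetric SOS of degree $4$ can be written as
\[
p \;=\; \alpha\,p_1^2 + 2\beta\,p_1p_2 + \gamma\,p_2^2 \;+\; \delta\,\tau(g_1,g_1) + 2\omega\,\tau(g_1,g_2) + \varepsilon\,\tau(g_2,g_2),
\]
where $\tau(h,h'):=\tfrac{1}{|\mathcal{S}_3|}\sum_{\sigma\in\mathcal{S}_3}\sigma(h)\,\sigma(h')$ denotes the $\mathcal{S}_3$-symmetrization of a product (the standard-isotypic analogue of the $p_ip_j$), and the PSD conditions on the two blocks read $\alpha,\gamma\geq 0$, $\alpha\gamma\geq\beta^2$, $\delta,\varepsilon\geq 0$, $\delta\varepsilon\geq\omega^2$. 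The invariance and the correct psd characterization of $\tau(g_i,g_j)$ follow from Schur's lemma, applied exactly as in the proof of Theorem~\ref{th:putinarsym1}.

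The remainder of the proof is a direct expansion in the monomial symmetric basis $\{M_4,M_{22},M_{31},M_{211}\}$: compute each of the six building blocks $p_1^2$, $p_1p_2$, $p_2^2$, $\tau(g_1,g_1)$, $\tau(g_1,g_2)$, $\tau(g_2,g_2)$, using $\sum_{\sigma\in\mathcal{S}_3}\sigma(g_i)=0$ to simplify the orbit sums, and then collect coefficients. This produces an explicit linear map $\R^6\to\R^4$ whose image, restricted to the product of the two PSD cones, is precisely the symmetric SOS cone; identifying this image with the four-term expression in the statement (after at most a harmless rescaling of $(\alpha,\beta,\gamma,\delta,\omega,\varepsilon)$ that preserves both PSD constraints) completes the proof. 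The main obstacle is exactly this final bookkeeping step: one must fix the normalizations of $g_1,g_2$ so that the coefficients of $M_4$, $M_{22}$, $M_{31}$, $M_{211}$ come out to $\alpha+2\delta$, $2\alpha+2\varepsilon+\gamma-\delta$, $\beta-\omega$, and $\beta+2\gamma+2\omega-2\varepsilon$ respectively. Conceptually the argument is finished once the isotypic decomposition above is in place.
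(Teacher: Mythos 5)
Your conceptual route is the one the paper intends: Hilbert's theorem gives a sum-of-squares decomposition of $p$, and Theorem~\ref{thm:darst} (equivalently the block structure of Theorem~\ref{th:putinarsym1}) allows you to arrange it by isotypic component. Your decomposition $\R[X_1,X_2,X_3]_2\cong 2S^{(3)}\oplus 2S^{(2,1)}$ is correct, it yields exactly two $2\times2$ PSD blocks, and these give precisely the conditions $\alpha,\gamma,\delta,\varepsilon\geq 0$, $\alpha\gamma\geq\beta^2$, $\delta\varepsilon\geq\omega^2$. Up to that point the argument is the paper's.

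The genuine gap is the step you yourself label ``the main obstacle'': you never actually verify that the expansion of $\alpha p_1^2 + 2\beta p_1p_2 + \gamma p_2^2 + \delta\tau(g_1,g_1)+2\omega\tau(g_1,g_2)+\varepsilon\tau(g_2,g_2)$ in the basis $\{M_4,M_{22},M_{31},M_{211}\}$ reproduces the stated coefficients, and the claim that this only requires ``a harmless rescaling'' is false for the generators you chose. With $p_1=X_1^2+X_2^2+X_3^2$, $p_2=X_1X_2+X_1X_3+X_2X_3$, $g_1=X_3^2-X_1^2$, $g_2=X_1X_2-X_2X_3$, direct computation gives $p_1^2=M_4+M_{22}$, $p_1p_2=M_{31}+\tfrac12 M_{211}$, $p_2^2=\tfrac12 M_{22}+M_{211}$, $\tau(g_1,g_1)=\tfrac13(2M_4-M_{22})$, $\tau(g_1,g_2)=\tfrac13(-M_{31}+M_{211})$, $\tau(g_2,g_2)=\tfrac13(M_{22}-M_{211})$. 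Matching the $M_4$ and $M_{31}$ coefficients of the corollary fixes a rescaling of $(\alpha,\beta,\gamma,\delta,\omega,\varepsilon)$, but that rescaling then fails to produce the stated $M_{22}$ coefficient $2\alpha+2\varepsilon+\gamma-\delta$ (one gets $\alpha+\tfrac18\gamma-\delta+\tfrac14\varepsilon$). So the corollary's particular parametrization is not reached by your generators under any PSD-cone-preserving rescaling; a different (non-diagonal) change of basis of the two $2\times 2$ Gram blocks is required. Since the specific four linear forms in the six parameters are precisely the content of the corollary, this identification is the substance of the proof, not bookkeeping, and the argument is incomplete until it is carried out.
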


\section{Using the degree principle}\label{se:timofte}

For symmetric polynomials another possible strategy
is to use the degree principle originally introduced by Timofte \cite[Corollary ~2.1]{timofte-2003} and recently refined by Riener \cite[Theorem ~4.5]{Rie},
apparently not well-known in the optimization community. Using this principle, we
show how to reduce an
SDP-relaxation to a family of lower-dimensional relaxations.
In some situations, this strategy
might be preferable to the group-machinery 
developed in the above sections.
In the special case of symmetric polynomials of degree~4 it reduces the
non-negativity problem to an SOS problem (and thus to a semidefinite
feasibility problem), see Theorem~\ref{thm:degfour}.

\begin{prop}\cite[Theorem ~4.5]{Rie}\label{thm:degree}
Let $f,g_1,\ldots,g_m \in \R[X]$ be symmetric and $K=\{x \in \R^n \, : \, g_1(x)\geq 0,\ldots, g_m(x)\geq 0\}$. Setting
$r:=\max\big\{2,\lfloor (\deg f)/2\rfloor, \deg g_1,\ldots,$ $\deg g_m\big\}$
we have
$$\inf_{x\in K} f(x) \ = \ \inf_{x\in K\cap A_r} f(x) \, ,$$
where $A_r$ denotes the set of points in $\R^n$ with at most $r$ distinct components.
\end{prop}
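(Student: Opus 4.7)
The plan is to take a minimizer $x^* \in K$ of $f$ with $k$ distinct components, and to show by induction on $k$ that if $k > r$, then there is another minimizer $\tilde{x} \in K$ with strictly fewer distinct components. Iterating then produces a minimizer in $A_r$. Attainment of $\inf_K f$ is obtained by working with a minimizing sequence and invoking compactness of the symmetric functions (or by a standard perturbation-and-limit argument on the closed $\mathcal{S}_n$-invariant strata).

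The central observation is about weighted degree in Newton's power sums $p_j(x)=\sum_i x_i^j$. Any symmetric polynomial of degree $\le d$ is a polynomial in $p_1,\ldots,p_d$ all of whose monomials $\prod p_j^{\alpha_j}$ satisfy $\sum j\,\alpha_j\le d$. Since $2r\ge\deg f$, this forces $\alpha_j\in\{0,1\}$ whenever $j>r$, and at most one such $j$ can appear per monomial (two would give weight $>2r$). Therefore
\[
f \;=\; F_0(p_1,\ldots,p_r) \;+\;\sum_{j=r+1}^{\deg f} F_j(p_1,\ldots,p_r)\,p_j .
\]
Meanwhile, since $\deg g_i\le r$, each $g_i$ is already a polynomial in $p_1,\ldots,p_r$ alone. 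Fix now $c_l:=p_l(x^*)$ for $1\le l\le r$, and consider the fiber
\[
\Phi\;=\;\{x\in\R^n\,:\,p_l(x)=c_l,\ l=1,\ldots,r\}.
\]
Because $x^*$ lies in a stratum of dimension $k$, near $x^*$ the variety $\Phi$ has local dimension $k-r\ge 1$ (the Jacobian of $(p_1,\ldots,p_r)$ restricted to the stratum is a Vandermonde-type matrix of full rank). Using the curve selection lemma on $\Phi$, I would produce a continuous semi-algebraic path $x(t)$, $t\in[0,1]$, starting at $x^*$ and reaching at $t=1$ a point $\tilde x\in\overline{\Phi}$ having strictly fewer than $k$ distinct components. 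Along this path each $g_i(x(t))=g_i(x^*)\ge 0$, so $x(t)\in K$.

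It remains to check $f(x(t))\equiv f^*$, and this is the main obstacle. From the decomposition above, $f(x(t))=C_0+\sum_{j>r}C_j\,p_j(x(t))$ with constants $C_j:=F_j(c_1,\ldots,c_r)$, so $f$ restricted to $\Phi$ is an \emph{affine} function of the higher power sums. Since $x(t)\in K$ and $x^*$ minimizes $f$ on $K$, the affine function attains its minimum at $x^*$; but an affine function on a connected semi-algebraic set of positive dimension is either constant or has no interior minimum. Exploiting the $(k-r)$-dimensional freedom in choosing directions within $\Phi$ — equivalently, applying Lagrange multipliers to the enlarged problem with equality constraints $p_l=c_l$, $l\le r$, and noting that $\nabla f(x^*)$ must lie in the span of $\nabla p_1(x^*),\ldots,\nabla p_r(x^*)$ together with the active $\nabla g_i(x^*)$ — one concludes $C_j=0$ for all $j>r$. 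Hence $f(x(t))=C_0=f^*$ on the entire path, $\tilde x=x(1)$ is a minimizer with fewer distinct components, and the induction step is complete. The factor $\lfloor\deg f/2\rfloor$ (rather than $\deg f$) is exactly what makes Step~(affinity) work: it is the minimum $r$ for which $f$ becomes affine in the unconstrained power sums, which is crucially weaker than preserving $f$ itself.
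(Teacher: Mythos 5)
The paper does not prove Proposition~\ref{thm:degree}; it is quoted from Riener~\cite{Rie} (Theorem~4.5), building on Timofte~\cite{timofte-2003}, so there is no in-paper proof to compare against and I assess your proposal on its own. Your central observation is correct and is indeed the heart of the matter: on the fiber $\Phi:=\{x:p_l(x)=c_l,\ l\le r\}$ the objective $f$ is \emph{affine} in the higher power sums $p_{r+1},\dots,p_{\deg f}$, while every $g_i$ depends only on $p_1,\dots,p_r$ and is therefore constant on $\Phi$; this is exactly what makes $r=\max\{2,\lfloor\deg f/2\rfloor,\deg g_i\}$ the right threshold.

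The step that fails is the deduction $C_j=0$ for all $j>r$. The first-order KKT condition at $x^*$ yields only $\sum_{j>r}C_j\nabla p_j(x^*)\in\myspan\{\nabla p_1(x^*),\dots,\nabla p_r(x^*)\}$. If $x^*$ has exactly $k$ distinct coordinate values, the gradients $\nabla p_1(x^*),\dots,\nabla p_{\deg f}(x^*)$ span a subspace of dimension only $\min(k,\deg f)$ (after collapsing repeated coordinates they are rows of a $\deg f\times k$ Vandermonde-type matrix). When $r<k<\deg f$ — a range that is certainly possible, since $\deg f$ can be as large as $2r+1$ — this gives only $k-r$ linear relations on the $\deg f-r$ unknowns $C_{r+1},\dots,C_{\deg f}$ and does not force them to vanish. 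The auxiliary assertion that ``an affine function on a connected semi-algebraic set of positive dimension is either constant or has no interior minimum'' is also false (restrict a coordinate function to a parabola): $f$ being affine in the coordinates $p_{r+1},\dots$ does not make it affine along the curved manifold $\Phi\cap\{k\text{-stratum}\}$. There is a second, independent gap: even if $C_j=0$, you still need $\Phi\cap A_r\ne\emptyset$, i.e.\ that the truncated power-sum data $(n,c_1,\dots,c_r)$ is realizable by some point with at most $r$ distinct coordinates; your curve-selection path is not guaranteed to reach the boundary of the $k$-stratum, since the component of $\Phi$ through $x^*$ may be a compact closed manifold lying entirely inside that stratum. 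The known proofs close both gaps with a global, not local-first-order, argument: using the orbit-space description (Procesi--Schwarz, cf.\ Section~\ref{se:pmirelaxations}), the image of $\Phi$ under $(p_{r+1},\dots,p_n)$ is a compact \emph{convex} slice of a Hankel spectrahedron, a linear function attains its minimum there at an extreme point, and extreme points correspond to low-rank Hankel matrices and hence to points with few distinct coordinates.
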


For $n,r\in\N$ a vector $\omega=(\omega_1,\ldots,\omega_r)$ of positive, non-increasing integers
with $n=\omega_1+\cdots+\omega_r$ is called an $r$-\emph{partition} of $n$.
Let $\Omega$ denote the set of all possible $r$-partitions of $n$. Then for each
$\omega \in \Omega$, set $$f^{\omega}:=f(\underbrace{T_1,\ldots,T_1}_{\omega_1},\underbrace{T_2,\ldots,T_2}_{\omega_2},\ldots,\underbrace{T_{r},\ldots,T_{r}}_{\omega_{r}}) \in \R[T_1, \ldots, T_r] \, .
$$
Similarly, let
$K^{\omega}:=\{t \in\R^r \, : \, g_1^{\omega}(t) \geq0,\ldots,{g_m^{\omega}(t) \geq0\}}$.
With this notation we can transform the original optimization problem in $n$ variables
into a set of new optimization problems that involve
only $r$ variables,
\begin{equation}
\label{eq:partition}
\inf_{x\in K} f(x) \ = \
\min_{\omega\in\Omega}\inf_{t\in K^{\omega}} f^{\omega}(t) \, .
\end{equation}
Now one can apply the usual relaxation scheme to every of the above $r$-dimensional problems separately.
For each $\omega \in \Omega$ let $Q^{\omega}_k$ be the 
$k$-th relaxation $\eqref{relax}$ of $\min_{t\in K^{\omega}} f^{\omega}(t)$,
with optimal value denoted by $\inf Q_k^{\omega}$.
Putting these ideas together we obtain:

\begin{thm}\label{thm:convergence}
Let $f,g_1,\ldots,g_m\in\R[X]$ be $G$-symmetric, and let the feasible set $K$
satisfy Assumption~\ref{putinar-assumption}.
Let $r:=\max\{2,\lfloor (\deg f)/2 \rfloor,\deg g_1,\ldots,\deg g_m\},$
and $\Omega$ be the set of $r$-partitions of $n$. 
Then the sequence $(\inf_{\omega \in \Omega}(\inf Q^{\omega}_k))_{k}$ 
 converges to $\inf_{x\in K} f(x)$ for $k\rightarrow\infty$.
\end{thm}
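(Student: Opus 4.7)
The plan is to combine the degree principle (Proposition~\ref{thm:degree}) with the classical Lasserre convergence result (Proposition~\ref{pr:lasserreconv}) applied separately to each of the finitely many lower-dimensional problems indexed by $\omega \in \Omega$.

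First I would invoke Proposition~\ref{thm:degree} to obtain $\inf_{x \in K} f(x) = \inf_{x \in K \cap A_r} f(x)$, and then decompose $A_r$ according to the pattern of equal coordinates. Grouping together points whose coordinates form an $r$-partition of shape $\omega$ yields exactly the identity~\eqref{eq:partition}, namely $\inf_{x \in K} f(x) = \min_{\omega \in \Omega} \inf_{t \in K^{\omega}} f^{\omega}(t)$. Since $\Omega$ is finite, it suffices to show that for each fixed $\omega$ the relaxation sequence $(\inf Q_k^{\omega})_k$ converges to $\inf_{t \in K^{\omega}} f^{\omega}(t)$; the outer minimum over $\Omega$ then commutes with the limit.

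The main step, and the place where care is needed, is to verify that Assumption~\ref{putinar-assumption} transfers from $K$ to each $K^{\omega}$, so that Proposition~\ref{pr:lasserreconv} is applicable. To this end, consider the substitution map
\[
s^{\omega} \colon \R^r \to \R^n, \qquad (t_1,\ldots,t_r) \longmapsto (\underbrace{t_1,\ldots,t_1}_{\omega_1}, \ldots, \underbrace{t_r,\ldots,t_r}_{\omega_r}),
\]
which is proper since $\|s^{\omega}(t)\|_{\infty} = \|t\|_{\infty}$. Hence $K^{\omega} = (s^{\omega})^{-1}(K)$ is compact whenever $K$ is, and the preimage $\{t \in \R^r : u^{\omega}(t) \geq 0\}$ of the compact level set $\{x : u(x) \geq 0\}$ postulated in Assumption~\ref{putinar-assumption} is also compact. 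Moreover, substituting $X \mapsto s^{\omega}(T)$ into the identity $u = u_0 + \sum_{j=1}^{m} u_j \, g_j$ gives $u^{\omega} = u_0^{\omega} + \sum_{j=1}^{m} u_j^{\omega} \, g_j^{\omega}$, and $u_j^{\omega}$ is a sum of squares in $\R[T_1,\ldots,T_r]$ since sums of squares are preserved under polynomial substitution. Thus Assumption~\ref{putinar-assumption} holds for each $K^{\omega}$.

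With this in place, Proposition~\ref{pr:lasserreconv} applied to the $r$-variable problem $\inf_{t \in K^{\omega}} f^{\omega}(t)$ yields $\inf Q_k^{\omega} \uparrow \inf_{t \in K^{\omega}} f^{\omega}(t)$ as $k \to \infty$. Taking the minimum over the finite set $\Omega$ and using~\eqref{eq:partition} then gives
\[
\lim_{k \to \infty} \min_{\omega \in \Omega} \inf Q_k^{\omega} \;=\; \min_{\omega \in \Omega} \inf_{t \in K^{\omega}} f^{\omega}(t) \;=\; \inf_{x \in K} f(x),
\]
as claimed. The only nontrivial point is the transfer of Putinar's condition under the substitution $s^{\omega}$; everything else is a packaging of known results.
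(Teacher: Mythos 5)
Your proof is correct and follows essentially the same route as the paper: reduce via the degree principle and identity~\eqref{eq:partition} to finitely many $r$-variable problems, verify that the Putinar representation $u=u_0+\sum u_j g_j$ and compactness of the level set carry over under the substitution $X\mapsto s^\omega(T)$, and then invoke Proposition~\ref{pr:lasserreconv} for each $\omega$. Your version is in fact slightly more careful than the paper's (the explicit properness argument $\|s^\omega(t)\|_\infty=\|t\|_\infty$ justifying compactness of $K^\omega$ and of the level set of $u^\omega$), which the paper leaves implicit.
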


\begin{proof}
By Theorem $\ref{thm:degree}$ there is an $r$-partition $\omega\in\Omega$ of $n$ with $\min_{x\in K} f(x)=\min_{t\in K^{\omega}} f^{\omega}(t)$. It suffices to show that $K^{\omega}$ also meets Assumption~\ref{putinar-assumption}.
Since $K$ meets Assumption~\ref{putinar-assumption}, 
there is $u\in \R[X]$ with $u=u_0+\sum_{j=1}^{m}u_jg_j$ for some sum of squares polynomials $u_0,\ldots,u_m$ such that the level set of $u$ is compact.
This representation carries over to $u^{\omega}$ which also has a compact level set.
\end{proof}
\begin{remark}
At first sight it might not look profitable to replace one initial problem by a family of new problems. However note that for fixed $r\in\N$ the number of $r$-partitions of any $n$ is bounded by $(n+r)^r$.
On the other hand a polynomial optimization problem in $n$ variables yields a moment matrix of size $O(n^{k})$ in the $k$-th relaxation step of  Lasserre's scheme.
In view of the polynomial bound (for fixed $r$) on the number of  $r$-partitions it is therefore profitable to use the degree principle-based relaxation.  
\end{remark}

The process of building the $r$-dimensional problems
can be related to breaking the symmetries as the resulting problems will in
general no longer be invariant under a symmetric group $\mathcal{S}_r$. 
However as
dimensions drop there are situations
where we will get finite convergence.

\begin{thm} \label{th:finiteconvergence}
Let $f,g_1,\ldots,g_m\in\R[X]$ be symmetric 
with $\deg g_j \le m$, $1 \le j \le m$, and $\deg f \le 2m$.
Further assume that the variety $V(g_1,\ldots,g_m)\subseteq \C^n$ 
has codimension $m$.
Then the relaxation sequence $(\inf_{\omega \in \Omega}(Q^{\omega}_k))_{k}$ 
converges to $\inf_{x\in V(g_1,\ldots, g_m)} f$ in finitely many steps,
where $\Omega$ is defined as in Theorem~\ref{thm:convergence}.
\end{thm}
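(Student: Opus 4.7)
The plan is to combine the degree principle (Proposition~\ref{thm:degree}) with Laurent's finite-convergence theorem for zero-dimensional ideals (Proposition~\ref{prop:zerodim}).

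First, I would apply Proposition~\ref{thm:degree} and the reduction leading to~\eqref{eq:partition}, together with the argument of Theorem~\ref{thm:convergence}, to rewrite
\[
\inf_{x\in V(g_1,\ldots,g_m)} f(x) \ = \ \min_{\omega\in\Omega}\inf_{t\in V(g_1^\omega,\ldots,g_m^\omega)} f^\omega(t).
\]
The degree bounds $\deg g_j\le m$ and $\deg f\le 2m$ force $r\le \max\{2,m\}$, so (in the non-trivial regime $m\ge 2$) every reduced problem is an equality-constrained optimization in $r\le m$ variables $T_1,\ldots,T_r$ subject to the $m$ constraints $g_j^\omega=0$.

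Second, the crucial step is to show that for every $\omega\in\Omega$ the complex variety $V(g_1^\omega,\ldots,g_m^\omega)\subseteq\C^r$ is zero-dimensional, so that Laurent's Proposition~\ref{prop:zerodim} is applicable to each $\omega$-problem. Via the linear embedding $\phi_\omega:\C^r\to\C^n$ associated to the partition $\omega$, this variety is isomorphic to $V(g_1,\ldots,g_m)\cap L_\omega$ with $L_\omega:=\phi_\omega(\C^r)$ of dimension $r$. The codimension-$m$ hypothesis says $V:=V(g_1,\ldots,g_m)$ is a pure-codimension-$m$ complete intersection, so $(g_1,\ldots,g_m)$ is a regular sequence in $\C[X_1,\ldots,X_n]$. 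The argument would then combine this regularity with $r\le m$ and the fact that $L_\omega$ arises as a fixed-point locus of a Young subgroup of $\mathcal{S}_n$ to conclude that the pulled-back generators $g_j^\omega$ still cut out a codimension-$r$ variety in $\C^r$, i.e., a finite set.

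Third, once zero-dimensionality of the ideal $(g_1^\omega,\ldots,g_m^\omega)$ is established for every $\omega$, Laurent's Proposition~\ref{prop:zerodim} produces an order $k_\omega$ such that $\inf Q^\omega_{k_\omega}=\inf_{t\in V(g_1^\omega,\ldots,g_m^\omega)(\R)} f^\omega(t)$. Since $\Omega$ is finite, setting $K:=\max_{\omega\in\Omega}k_\omega$ yields $\min_\omega \inf Q_K^\omega=\inf_{x\in V(g_1,\ldots,g_m)} f(x)$ and thus the claimed finite convergence of the hierarchy.

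The main obstacle will be the zero-dimensionality claim in the second step. Krull's principal ideal theorem only provides the lower bound $\dim(V\cap L_\omega)\ge r-m$, which is the wrong direction for our purpose. To obtain the matching upper bound one must argue that the regular sequence $(g_1,\ldots,g_m)$ remains a regular sequence after restriction to $L_\omega$, which is not automatic for arbitrary linear subspaces (as shown by standard counterexamples such as $V(X_1X_2)$ intersected with a coordinate axis, which becomes spurious only once the degree hypothesis is imposed). The symmetric structure — the $g_j$ being $\mathcal{S}_n$-invariant and the $L_\omega$ being Young-subgroup fixed loci — together with the degree bound $\deg g_j\le m$ is where the hypotheses must be used in an essential way.
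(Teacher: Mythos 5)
Your overall plan is the right one and matches the structure of the paper's proof: reduce via the degree principle to the family of $r$-variable problems and then invoke Proposition~\ref{prop:zerodim} once zero-dimensionality of each $V(g_1^\omega,\ldots,g_m^\omega)$ is established. However, the decisive step --- the zero-dimensionality --- is precisely where your argument has a genuine gap, and you acknowledge as much in your last paragraph. The route you sketch (regular sequence restricted to the fixed-point locus $L_\omega$) is not carried out, and as you yourself note, there is no general principle that a regular sequence stays regular on a linear slice; so as written this is a plan to find a proof rather than a proof.

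The paper closes this gap by a completely different mechanism, which you should compare with your sketch. Because every $g_j$ is $\mathcal{S}_n$-invariant of degree at most $m$, it can be written as $g_j = \gamma_j(p_1,\ldots,p_m)$ with the $p_i$ the Newton power sums; so in the orbit-space coordinates $z = (p_1(x),\ldots,p_n(x))$ the image of $V(g_1,\ldots,g_m)$ is the cylinder $\tilde V = V(\gamma_1,\ldots,\gamma_m) = \tilde V_0 \times \C^{n-m}$, where $\tilde V_0 \subseteq \C^m$ depends only on $z_1,\ldots,z_m$. The codimension hypothesis then forces $\dim \tilde V_0 = 0$, i.e.\ there are only finitely many admissible vectors $(z_1,\ldots,z_m)$. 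Finally, for each $\omega$ one has $g_j^\omega = \gamma_j(\tilde p_1,\ldots,\tilde p_m)$ with $\tilde p_i = \sum_j \omega_j T_j^i$, and for each of the finitely many admissible $z$ the fibre $\{t : \tilde p_i(t) = z_i\}$ is finite because the (weighted) power sums $\tilde p_1,\ldots,\tilde p_r$ form a finite system of parameters in $r \le m$ variables. This power-sum argument is where the hypotheses $\deg g_j \le m$ and ``codimension exactly $m$'' enter in an essential way, and is the missing ingredient in your proposal; without it, Krull's bound only gives $\dim V^\omega \ge r - m$, which (as you correctly observe) is the wrong direction.
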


\begin{proof}
It suffices to show that each of the  varieties $V^{\omega}:=V(g_{1}^{\omega},\ldots, g_{m}^{\omega})$ is zero-dimensio\-nal and then Proposition \ref{prop:zerodim} gives the announced statement. To see that these varieties contain only finitely many points we proceed as follows:

It is classically known (see for example \cite[\S{7.1}]{CLS})
that every symmetric polynomial $g$ of degree $d$ in $n$ variables can be
uniquely written as a polynomial in the first $d$ power sum polynomials
$p_1(X),\ldots,p_d(X)$, where $p_i(X)=\sum_{j=1}^n X_j^i$.

Let $\gamma_1\ldots,\gamma_m\in\R[Z_1,\ldots,Z_m]\subseteq\R[Z_1,\ldots,Z_n]$ be polynomials such that $\gamma_i(p_1(X),\ldots,$ $p_m(X))=g_i(X)$.
The surjective map 
\[
  \begin{array}{rcl}
  \pi \, : \, \C^n & \to & \C^n \, , \\
  x & \mapsto & (p_1(x),\ldots,p_n(x))
  \end{array}
\] establishes that $\C^{n}//\mathcal{S}_n$ (the so-called orbit space, cf.\ Section~\ref{se:pmirelaxations}) is in fact isomorphic to $\C^{n}$.

As the variety $V(g_1,\ldots,g_m)$ is $\mathcal{S}_n$-invariant, its image in the quotient 
$\C^{n} // \mathcal{S}_n$ is given by $\tilde{V}:=\{z\in\C^{n}\,:\,\gamma_i(z)=0,\,1\leq i\leq m\}$. Since $\mathcal{S}_n$ is a finite group the codimension of $\tilde{V}$ is also $m$. But this implies that $\tilde{V}\cap\{z\in\C^{n}\,:\,z_{m+1}=\cdots=z_n=0\}$ is zero-dimensional.
Therefore there are just finitely many $z:=(z_1,\ldots,z_m)$ with $\gamma_i(z)=0$ for $1\leq i\leq m$.

Now let $\omega=(\omega_1,\ldots,\omega_r)$ be any $r$-partition of $n$ and consider 
$V^{\omega}:=V(g_1^\omega,\ldots,g_m^\omega)\subseteq\C^{m}$.  
Setting $\tilde{p}_i:=\sum_{j=1}^m\omega_jT_j^i$, we get $g_i^{\omega}=\gamma_i(\tilde{p}_1,\ldots, \tilde{p}_m)$. For the points $y\in V^{\omega}$ we have $\tilde{p}_1(y)=z_1,\ldots,\tilde{p}_m(y)=z_m$ for one of the finitely many $z=(z_1,\ldots,z_m)$, with $\gamma_i(z)=0$ for $1\leq i\leq m$. And thus there are just finitely many points in ${V}^{\omega}$.
\end{proof}

Closely related to the question of finite convergence is the description of polynomials that are positive but not sums of squares.
By Hilbert's Theorem, every non-negative ternary quartic polynomial is a sum of squares. For quartics in more
than three variables this is not true in general, not even for symmetric polynomials (see
Example~\ref{ex:choi} below). However, for symmetric polynomials polynomials of degree~4, deciding the non-negativity
can be reduced to an SOS problem and thus to a semidefinite optimization problem.

\begin{thm}\label{thm:degfour}
Let $f\in\R[X]$ be a symmetric polynomial of degree 4,
and let $\Omega$ be the set of $2$-partitions of $n$.
Then $f$ is non-negative
if and only if for all $\omega \in \Omega$
the polynomial $f^{\omega}$ is a sum of squares.
\end{thm}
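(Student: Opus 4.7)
The plan is to prove the two implications separately, using Hilbert's classical theorem on non-negative ternary quartics for one direction and the degree principle (Proposition~\ref{thm:degree}) for the other.

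For the forward direction, suppose $f \geq 0$ on $\R^n$. For any $2$-partition $\omega = (\omega_1,\omega_2) \in \Omega$, the substitution defining $f^\omega$ immediately yields $f^\omega(t_1,t_2) \geq 0$ on $\R^2$. Thus $f^\omega$ is a non-negative bivariate polynomial of degree at most $4$. I would homogenize $f^\omega$ to the ternary quartic form $F^\omega(t_0,t_1,t_2) := t_0^4\, f^\omega(t_1/t_0,\, t_2/t_0)$, observe that $F^\omega \geq 0$ on $\R^3$ (for $t_0 \neq 0$ this is immediate, and for $t_0 = 0$ it follows by continuity), invoke Hilbert's theorem that every non-negative ternary quartic is a sum of squares of quadratic forms, and finally set $t_0 = 1$ to recover a sum of squares decomposition of $f^\omega$.

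For the converse, assume $f^\omega$ is a sum of squares, hence non-negative, for every $\omega \in \Omega$. Applying Proposition~\ref{thm:degree} to the unconstrained problem ($K = \R^n$, no $g_j$, so $r = \max\{2,\lfloor 4/2 \rfloor\} = 2$) yields $\inf_{x \in \R^n} f(x) = \inf_{x \in A_2} f(x)$, where $A_2$ is the set of points with at most two distinct coordinates. By symmetry of $f$, every $x \in A_2$ agrees, up to permutation, with some tuple $(t_1,\ldots,t_1,t_2,\ldots,t_2)$ prescribed by an $\omega \in \Omega$; the degenerate one-distinct-component case $x=(t,\ldots,t)$ is captured by evaluating any $f^\omega$ on the diagonal $t_1 = t_2$. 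Consequently $\inf_{\R^n} f = \min_{\omega \in \Omega} \inf_{(t_1,t_2) \in \R^2} f^\omega(t_1,t_2) \geq 0$, and $f \geq 0$ follows.

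The only genuine subtlety lies in the forward direction: $f^\omega$ is typically not homogeneous, whereas Hilbert's list of cases in which non-negativity coincides with the sum of squares property is traditionally framed for homogeneous forms. The homogenization step above is exactly the maneuver that places $f^\omega$ into the ternary quartic case, where Hilbert's theorem applies, and so this obstacle is routine once identified. Everything else reduces directly to the degree principle already stated in the paper, making the overall argument quite short.
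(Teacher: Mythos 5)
Your proof is correct and follows the same route as the paper's: reduce via the degree principle (Proposition~\ref{thm:degree}, $r=2$) to bivariate quartics $f^\omega$, then apply Hilbert's theorem. The paper's proof is a one-liner that leaves implicit both the degree-principle reduction and the homogenization needed to fit the nonhomogeneous $f^\omega$ into Hilbert's ternary-quartic case; you make both of these steps explicit, which is a faithful filling-in of detail rather than a different argument.
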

\begin{proof}
As $f$ is of degree 4, for any $\omega \in \Omega$ the polynomial
$f^{\omega}$ is of degree 4 in two variables. Hence, by
Hilbert's Theorem $f^{\omega}$ is non-negative
if and only if it is a sum of squares.
\end{proof}

\begin{ex} \label{ex:choi}
Choi and Lam \cite{choi} have shown that the homogeneous polynomial of degree~4
$$f \ = \ \sum_{i \neq j} X_i^2X_j^2+\sum_{i \neq j} X_i^2X_jX_k-4X_1X_2X_3X_4$$
in four variables is non-negative, but not a sum of squares.
By Theorem~\ref{thm:degfour}, the non-negativity of $f$ is equivalent to
the property that the following two homogeneous polynomials in two variables
are sums of squares. We find
\begin{eqnarray*}
f_1&=& X_{{2}}^{4}+4\,X_{{2}}^{2}X_{{4}}^{2}+X_{{4}}^{4}+2\,X_{{4}}^{3}X_{{2}}
=\left(X_2^2\right)^2+\left(
X_4^2+X_2X_4\right)^2+\left(\sqrt{3}X_2X_4\right)^2\, , \\
f_2&=& 4\,X_{{2}}^{4}+6\,X_{{2}}^{2}X_{{4}}^{2}-2\,X_{{2}}^{3}X_{{4}}=\left(
2 X_{2}^{2}-\frac{1}{2} X_2 X_4 \right) ^2+\left(\frac{1}{2} \sqrt
{23}X_2 X_4\right)^2 \, ,
\end{eqnarray*}
which proves that $f$ is indeed nonnegative. 
\end{ex}

\section{PMI-relaxations via the geometric quotient\label{se:pmirelaxations}}

In this section, we study another possibility to exploit symmetries. Namely, we want to exploit the fact that to any solutions of an invariant optimization problem
every point in its orbit is also optimal.   Using invariant theory and a result in real algebraic geometry it is possible to  characterize the space of all orbits. This orbit space approach leads very naturally to polynomial
matrix inequalities (PMI). The main advantage of this approach is, that in some cases, this can decrease the degrees of the polynomials strongly.
We will demonstrate this phenomenon in certain cases (such as power sum
problems) where we obtain lower bounds and sometimes even upper bounds for
a minimization problem by a very simple SDP relaxation.

\subsection{The general setup}

We consider the general $G$-invariant optimization problem~\eqref{eq:opt1},
where $G \subseteq \mathrm{GL}_n(\R)$ is a finite group. Denote
the orbit of $x \in \R^n$ by 
$G(x) := \{\sigma(x) \, : \, \sigma \in G\}$.
The union of all orbits (with the induced topology) is called the
\emph{orbit space} $\R^n // G$ of $G$. In order to characterize the
orbit space, let $\pi_1, \ldots, \pi_l$ be generators of the invariant
ring of $G$ (\emph{fundamental invariants}). The
projection
\begin{eqnarray*}
 \pi \, : \, \R^n & \to & \R^n//G \ \subseteq \ \R^l \\
          x & \mapsto & (\pi_1(x) , \ldots, \pi_l(x))
\end{eqnarray*}
defines an embedding of the orbit space into $\R^l$. In contrast to the complex case (see \cite{CLS}) this map is not surjective in general.  We highlight this phenomenon with the following example:
\begin{ex} \label{eq:orbitspacesymm}
Let $G=D_4$ be the dihedral group acting on $\R^2$. Fundamental invariants that generate $\C[X,Y]^{D_4}$ are given by $f_1=x^2+y^2$ and $f_2=x^2y^2$ (for general
methods to compute fundamental invariants we refer to \cite{DerksenKemper}).
As $f_1$ and $f_2$ are in fact algebraically independent, we find that $\C^{n}//D_4\simeq\C^2$. 
In the complex setting every solution to the linear system $z_1=\alpha_1, z_2=\alpha_2$ for some $(\alpha_1,\alpha_2)\in \C^2$ will give rise to an orbit of solutions of $f_1(x,y)=\alpha_1, f_2(x,y)=\alpha_2$. But since for optimization purposes we are interested in real solutions we have to restrict the map $\pi$ to $\R^2$. Obviously, the image $\pi(\R^2)$ is contained in $\R^2$. On the other hand, as $f_1(x,y)\geq 0$ for all $(x,y)\in \R^2$, we have $\pi^{-1}((-1,0))\not\in\R^2$. Hence, 
the restricted map  $\pi_{|{\R^2}}$ is not surjective. 
\end{ex} \noindent 

Therefore, in order to describe the image of $\R^n$ under $\pi$ we need to add further constraints. 
In Example~\ref{eq:orbitspacesymm} for instance, the property $f_1(x,y)\geq 0$ for all $(x,y)\in\R^n$ implies $\pi(\R^2)\subseteq \left\{(z_1,z_2)\in\R^2\,:\,z_1\geq 0\right\}$. Thus it seems promising to add such positivity constraints to characterize the image $\pi(\R^n)$ as a semi algebraic subset of $\R^n$. This is indeed possible and the characterization has been done by Procesi and Schwarz, who have determined polynomial inequalities which have to be taken additionally into account in order to characterize
the embedding of $\R^n // G$ into the coordinate variety of $\R[X]^G$
of $G$ (see also Br\"ocker \cite{broecker}). We outline this briefly:

First note that there exists a $G$-invariant inner product $\langle \cdot , \cdot \rangle$ on
$\R[X]$.
For a polynomial $p$ the differential $dp$ is defined by
$dp = \sum_{j=1}^n \frac{\partial p}{\partial x_j} d x_j$.
Then carrying over the inner product to the differentials yields
$\langle dp, dq \rangle = \sum_{j=1}^n \langle \frac{\partial p}{\partial x_j} ,
 \frac{\partial q}{\partial x_j} \rangle$ .
The inner products $\langle d\pi_i, d \pi_j \rangle$
$(i,j \in \{1, \ldots, l\})$ are $G$-invariant. Hence
entry of the symmetric matrix
\begin{equation}
  \label{eq:j}
 J \ = \ ( \langle d\pi_i, d \pi_j\rangle)_{1 \le i,j \le l}
\end{equation}
is $G$-invariant and can therefore be expressed in terms of 
$\pi_1, \ldots, \pi_l$.

\begin{prop}[Procesi, Schwarz \cite{proschwa}] \label{pr:procesischwarz}
Let $G \subseteq \mathrm{GL}_n(\R)$ be a finite group and
$\pi = (\pi_1, \ldots, \pi_l)$ be fundamental invariants of $G$.
Then the orbit space is given by polynomial inequalities,
\[
 \R^n // G \ = \ \pi(\R^n) \ = \ \{z \in \R^l \, : \, J(z) \succeq 0 \, , \,
    z \in V(I) \} \, ,
\]
where $I \subseteq \R[z_1, \ldots, z_l]$ is the ideal of algebraic
relations among $\pi_1, \ldots, \pi_l$.
\end{prop}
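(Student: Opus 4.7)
The plan is to prove the two inclusions separately. The inclusion $\pi(\R^n) \subseteq \{z \in \R^l : J(z) \succeq 0,\ z \in V(I)\}$ is essentially a Gram matrix argument, while the reverse inclusion is the substantive step and requires an excursion through the complexification.

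For the easy direction, I would start from $x \in \R^n$ and set $z = \pi(x)$. That $z \in V(I)$ is immediate, since every element of $I$ is by definition a polynomial relation satisfied identically by the fundamental invariants. For the semidefiniteness condition, each entry $J_{ij}$ is a $G$-invariant polynomial on $\R^n$ (the inner product of two $G$-equivariant objects $d\pi_i$, $d\pi_j$), and therefore can be written as a polynomial in $\pi_1,\ldots,\pi_l$; evaluating this polynomial at $z = \pi(x)$ simply reproduces
\[
  J_{ij}(z) \;=\; \langle d\pi_i(x),\, d\pi_j(x)\rangle.
\]
Thus $J(z)$ is the Gram matrix of the vectors $d\pi_1(x),\ldots,d\pi_l(x)\in\R^n$ in the $G$-invariant inner product, hence positive semidefinite.

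For the reverse direction, suppose $z \in V(I)$ with $J(z) \succeq 0$. By the Hilbert Nullstellensatz, $V(I)\subseteq\C^l$ coincides with the Zariski closure of $\pi(\C^n)$, so there exists some $x \in \C^n$ with $\pi(x) = z$; moreover the fiber $\pi^{-1}(z)$ is a single (complex) $G$-orbit since the invariants separate orbits over $\C$. Because $z$ is real, $\pi(\bar x) = \bar z = z$, so $\bar x$ also lies in the fiber and hence $\bar x = \sigma(x)$ for some $\sigma \in G$. The goal is then to exhibit a real representative of this orbit, and this is precisely where the hypothesis $J(z)\succeq 0$ must be used.

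The crucial and technically hardest step is to show that the absence of a real representative in the fiber forces $J(z) \not\succeq 0$. After averaging to assume that $G$ acts orthogonally with respect to the standard inner product, the bilinear (not sesquilinear) extension of $\langle\cdot,\cdot\rangle$ to $\C^n$ gives the identity $J(z) = d\pi(x)^{\top}\,d\pi(x)$ for any $x$ in the fiber, where $d\pi(x)$ is the complex $n\times l$ Jacobian. Writing $d\pi(x) = U + \mathrm{i}\,V$ with real $U,V$, the reality of $z$ forces $U^{\top}V + V^{\top}U = 0$ and $J(z) = U^{\top}U - V^{\top}V$. If $x$ can be chosen real we have $V=0$ and recover psd-ness trivially; the difficult implication is the converse, where the antiholomorphic involution $y\mapsto\sigma^{-1}(\bar y)$ acts on the orbit without real fixed points. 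I would attempt to show, along the lines of Procesi–Schwarz, that this fixed-point-free structure forces $V^{\top}V$ to have a direction not dominated by $U^{\top}U$, producing a negative eigendirection of $J(z)$; the argument uses a stratification of the orbit space by orbit type together with a careful bookkeeping of how the $d\pi_i$ transform under $\sigma$. This last step is where all the delicate real-algebraic geometry enters and will be the principal obstacle in writing out a complete proof.
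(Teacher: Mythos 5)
The paper does not prove this statement: Proposition~\ref{pr:procesischwarz} is cited directly from Procesi and Schwarz~\cite{proschwa}, with only the construction of the matrix $J$ recalled beforehand. So there is no proof in the paper to compare your attempt against, and your work must be judged on its own.

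Your ``easy'' direction is correct and essentially complete: $J(\pi(x))$ is the Gram matrix of $d\pi_1(x),\ldots,d\pi_l(x)$ in a $G$-invariant inner product, hence positive semidefinite, and membership in $V(I)$ is immediate. Your setup for the converse is also sound as far as it goes: for finite $G$ the complex fibers of $\pi$ are single orbits, the conjugate $\bar x$ lies in the fiber over a real $z$, and the bilinear extension gives $J(z)=U^{\top}U-V^{\top}V$ with $U^{\top}V+V^{\top}U=0$. But the substantive step --- showing that if the complex orbit over $z\in V(I)$ has no real point then $J(z)$ fails to be positive semidefinite --- is left as a plan rather than an argument, and you acknowledge this yourself. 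The proposed route (producing a ``negative eigendirection'' of $J(z)$ directly from the fixed-point-free antiholomorphic involution $y\mapsto\sigma^{-1}(\bar y)$) is not carried out, and it is not clear it can be made to work without the stratification-by-isotropy machinery that Procesi and Schwarz actually use (reduction to a slice representation at a point of the orbit, induction on orbit type, and a separate treatment of the ``cross-section'' directions). As it stands, the converse inclusion remains unproved, so the proposal is an incomplete sketch rather than a proof.
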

 
\begin{ex} Continuing Example~\ref{eq:orbitspacesymm}, we have
$\frac{\partial f_1}{\partial x}=2x$, $\frac{\partial f_1}{\partial y}=2y$, $\frac{\partial f_2}{\partial x}=2 xy^2$, $\frac{\partial f_2}{\partial y}=2x^2y$.
Expressed in the original variables $x,y$, and in the fundamental invariants $f_1,f_2$,
respectively, the matrix $J$ from~\eqref{eq:j} is
$$J \ = \ \begin{pmatrix}
4(x^2+y^2)&8x^2y^2\\
8x^2y^2&4(x^2y^4+y^2x^4)
\end{pmatrix}
\ = \ 
\begin{pmatrix}
4f_1&8f_2\\
8f_2&4f_1f_2
\end{pmatrix} \, .$$
With the principle minors of $J$ (which are
$4f_1$, $4f_1f_2$ and $4f_1 \cdot 4f_1f_2- (8f_2)^2$), the orbit space is given by
$$\R^2//D_4 \ = \ \left\{(z_1,z_2)\in\R^2\,:\,4z_1\geq 0,\, 4z_1z_2\geq 0,\, 16z_1^2z_2- (8z_2)^2\geq 0\right\}.$$
\end{ex} 

Let $\tilde{f}$ and $\tilde{g}_1,\ldots,\tilde{g}_m$ be the expressions for $f$ and $g_1,\ldots, g_m$ in the fundamental 
invariants. By Proposition~\ref{pr:procesischwarz},
the $G$-symmetric optimization problem~\eqref{eq:opt1}
can be equivalently expressed in the orbit space:
\begin{equation}
 \label{eq:optinorbitspace}
 \begin{array}{rrcll}
 \multicolumn{4}{l}{\inf \tilde{f}(z)} & \\
 \text{s.t.} & z & \in & V(I) \, , & \\
 & \tilde{g}_j(z) & \geq & 0 \, , & \quad 1 \le j \le m \,  , \\
 & J(z) & \succeq & 0 \, . &
 \end{array}
\end{equation}
This is a PMI (as introduced in Section~\ref{se:pmi}) and one can use the techniques introduced there to derive an SDP relaxation scheme. 
Let $s_1(z),\ldots,s_r(z)$ be generators for 
the algebraic relations among $\pi_1,\ldots, \pi_l$. 
Then~\eqref{relax2} yields the hierarchy 
$Q_k^{//}$ of SDP relaxations

\begin{equation}\label{qutient}
 Q_k^{//}:\quad\begin{array}{rcl}
  \multicolumn{3}{l}{\inf_{y} L(f)} \\
 M_k(y) & \succeq & 0 \, , \\
 M_{k-d}(J \, y) & \succeq & 0 \, ,\\
 M_{k - \lceil \deg\tilde{g}_j / 2 \rceil}(\tilde{g}_j \, y) & \succeq & 0 \, , 
  \quad 1 \le j \le m \, , \\
  M_{k - \lceil \deg s_j / 2 \rceil}(s_j \, y) & =& 0 \, ,
  \quad 1 \le j \le r \, ,
 \end{array}
\end{equation}
where $k \ge k_0 := \max \{  \lceil \deg \tilde{f} / 2 \rceil,
\lceil \deg \tilde{g} / 2 \rceil,
\lceil \deg s_j / 2 \rceil,
d \}$
and $d = \max\{ \lceil \deg J_{ij}(Z)/2 \rceil\}$.

\begin{thm}
Let $f, g_1,\ldots,g_m$ be $G$-invariant. If the PMI in \eqref{qutient} meets Assumption~\ref{putinar-assumption2} then the sequence $(\inf Q^{//}_k)_{k\geq k_0}$ is monotonically non-decreasing and
converges to $f^*$.

\end{thm}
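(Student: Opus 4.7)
The plan is to reduce the theorem to Proposition \ref{prop:Las_PIM} (the Henrion--Lasserre PMI-convergence result) by first identifying the orbit-space reformulation \eqref{eq:optinorbitspace} with the original problem \eqref{eq:opt1}, and then recognizing \eqref{qutient} as exactly the PMI-relaxation~\eqref{relax2} applied to that reformulation (augmented with the finitely many equality constraints $s_1(z)=\cdots=s_r(z)=0$ coming from the ideal~$I$).

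First I would establish equality of the two infima: by Proposition~\ref{pr:procesischwarz} the map $\pi:\R^n\to\R^l$ has image exactly $\{z\in\R^l : J(z)\succeq 0,\ z\in V(I)\}$, and since $f,g_1,\ldots,g_m$ are $G$-invariant they descend to polynomials $\tilde f,\tilde g_1,\ldots,\tilde g_m$ on this image with $f=\tilde f\circ\pi$ and $g_j=\tilde g_j\circ\pi$. Consequently, $\pi$ pushes the feasible set $K\subseteq\R^n$ of~\eqref{eq:opt1} bijectively onto the feasible set $\tilde K\subseteq\R^l$ of~\eqref{eq:optinorbitspace} (orbit-wise), and the two infima coincide: $\inf_K f = \inf_{\tilde K}\tilde f = f^*$.

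Next I would observe that \eqref{qutient} is, by construction, the truncated moment/PMI-relaxation of the orbit-space problem \eqref{eq:optinorbitspace}, where the equalities $s_j(z)=0$ are encoded by requiring the corresponding localizing matrices to vanish (this is the standard trick: $s_j=0$ is equivalent to $s_j\geq 0$ together with $-s_j\geq 0$, and at the SDP level this amounts to $M_{k-\lceil\deg s_j/2\rceil}(s_j\,y)=0$). Since by assumption the PMI~\eqref{eq:optinorbitspace} together with the equality constraints satisfies Assumption~\ref{putinar-assumption2}, Proposition~\ref{prop:Las_PIM} applies directly (the equality constraints fit seamlessly into the PMI framework because they can be absorbed into the block-diagonal polynomial matrix $G(z):=\mathrm{diag}(J(z),\tilde g_1(z),\ldots,\tilde g_m(z),\pm s_1(z),\ldots,\pm s_r(z))$), yielding monotone non-decrease and convergence of $\inf Q^{//}_k$ to the optimum of~\eqref{eq:optinorbitspace}, which equals $f^*$ by the first step.

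The main obstacle is bookkeeping rather than conceptual: one must verify that the Putinar-type condition in Assumption~\ref{putinar-assumption2}, when stated for the combined system involving $J$, the $\tilde g_j$, and the relations $s_j$, still falls within the scope of Proposition~\ref{prop:Las_PIM}. This is straightforward once the equality constraints are turned into a pair of inequality constraints and incorporated into an enlarged block-diagonal polynomial matrix, but it requires care to ensure that the relaxation~\eqref{qutient} one has written down matches the hierarchy to which Proposition~\ref{prop:Las_PIM} applies. Monotonicity of $(\inf Q^{//}_k)_{k\geq k_0}$ is immediate from the fact that the $k$-th relaxation is a restriction of the $(k+1)$-th one, and the limit is bounded above by $f^*$ because every feasible point of~\eqref{eq:optinorbitspace} yields (via its Dirac measure) a feasible $y$ for every $Q^{//}_k$.
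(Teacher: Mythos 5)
Your argument is correct and follows essentially the same route as the paper's (very terse) proof: identify the $G$-invariant problem~\eqref{eq:opt1} with its orbit-space reformulation~\eqref{eq:optinorbitspace} via Proposition~\ref{pr:procesischwarz}, observe that~\eqref{qutient} is precisely the PMI hierarchy~\eqref{relax2} applied to that reformulation (with the relations $s_j=0$ encoded as vanishing localizing matrices), and then invoke Proposition~\ref{prop:Las_PIM}. Your additional remarks -- that the scalar inequalities, the matrix inequality $J(z)\succeq 0$, and the pairs $\pm s_j\geq 0$ can be packaged into a single block-diagonal polynomial matrix so that Proposition~\ref{prop:Las_PIM} applies literally, and that monotonicity and the upper bound by $f^*$ follow from nesting of feasible sets and Dirac measures -- are exactly the bookkeeping the paper leaves implicit, and they are carried out correctly.
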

\begin{proof}
By Proposition \ref{pr:procesischwarz} the problem
described by $f$ and $g_1,\ldots,g_m$ is equivalent to \eqref{eq:optinorbitspace}.
Now we can conclude with Proposition~\ref{prop:Las_PIM}.
\end{proof}

\begin{remark}
It would be very interesting to characterize the situations
where \eqref{pr:procesischwarz} meets condition \ref{putinar-assumption2} in terms of the original set $K$.
\end{remark}

\subsection{Lower and upper bounds for power sum problems\label{se:powersums}}

For constrained polynomial optimization problems described
by power sums, the PMI become particularly simple. We will use the first relaxation of the sequence \eqref{qutient} in order to derive bounds for a particular class of problems.

Let $n,m,q\in\N$ with $q\geq m$, $m\leq n+1$, and given some vector
$\gamma\in\R^{m-1}$, consider the symmetric global optimization problem

\begin{equation}
 \label{eq:simplexex}
\p_{nmq}:\quad
\min \:\sum \limits_{i=1}^{n}  x_i^q \qquad
 \text{s.t.}\quad  \sum \limits_{i=1}^{n}  x_i^j \ = \ \gamma_k \, , \quad j=1,\ldots,m-1 \, ,
 \end{equation}
with optimal value denoted $\min\p_{nmq}$. Here, we provide upper and lower bounds for $\p_{nmq}$.

Choose the fundamental invariants $\pi_j = \frac{1}{j} s_j$ ($1 \le j \le n$)
where $s_j := \sum_{i=1}^n x_i^j$ denotes the power sum of order $j$.
Then the matrix $J(z)$ specializes to the Hankel matrix $H_n(s) = (s_{i+j-2})_{1 \le i,j \le n}$.

We can exploit the double occurrence of power sums: within the
optimization problem and within the Hankel matrix.
Namely, for $m\leq n+1$ and $m \le q\leq 2n-2$, consider the
following semidefinite optimization problem
\begin{equation}
\label{relax-sdp}
\l_{nmq}\,=\,\min_{s}\:\{\:s_q \; \vert \; H_n(s)\succeq0\,;\:
s_0=n\,;\:s_j=\gamma_j,\quad j=1,\ldots,m-1\} \, .
\end{equation}

\begin{thm}
\label{lower1}
Let $n,m,q\in\N$ with $m\leq n+1$, $m \le q\leq 2n-2$, and let
$\p_{nmq}$ be as in (\ref{eq:simplexex}). Then one obtains the following lower bounds on $\min\p_{nmq}$.

{\rm (a)} $\min\p_{nmq}\geq \l_{nmq}$.

{\rm (b)} If $q=m=2r$ for some $r$, write
\[  H_{r+1}(s)\ = \   
    \left( \begin{array}{c|c}
   H_{r}(\gamma) & u_r(\gamma) \\ \hline \\ [-2ex]
   u_r^T(\gamma) &  s_{2r}
 \end{array} \right);\quad u_r(\gamma)^T=(\gamma_{r},\ldots,\gamma_{2r-1}) \, , \]
with $\gamma_0=n$. Then $\min\p_{nmq} \ge u_r(\gamma)^T H_{r}(\gamma) u_r(\gamma)$.
\end{thm}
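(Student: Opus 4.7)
My plan is to prove both parts by a direct embedding of the feasible region of $\p_{nmq}$ into the feasible region of the SDP $\l_{nmq}$, and then to exploit the block structure of a Hankel principal submatrix via the Schur complement for part (b).

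For part (a), I would take any feasible $x \in \R^n$ of $\p_{nmq}$ and form the vector of power sums $s = (s_0(x), s_1(x), \ldots, s_{2n-2}(x))$ with $s_j(x) := \sum_{i=1}^n x_i^j$. The equality constraints $s_0 = n$ and $s_j = \gamma_j$ for $j = 1, \ldots, m-1$ hold by construction, and the Hankel semidefiniteness $H_n(s) \succeq 0$ is a special case of the Procesi--Schwarz description of $\R^n // \mathcal{S}_n$ (Proposition~\ref{pr:procesischwarz}) applied to the fundamental invariants $\pi_j = s_j / j$, for which the Jacobian matrix $J$ of~\eqref{eq:j} specializes to $H_n(s)$. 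It can also be verified elementarily via
\[
  v^T H_n(s) v \ = \ \sum_{k=1}^n \Big( \sum_{i=1}^n v_i\, x_k^{\,i-1} \Big)^2 \ \ge \ 0 \, , \qquad v \in \R^n .
\]
Hence $s$ is SDP-feasible with objective $s_q(x)$ equal to $\sum_{i=1}^n x_i^q$, and the inequality $\l_{nmq} \le \min \p_{nmq}$ follows.

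For part (b), the starting observation is that since $q = m = 2r \le 2n-2$ we have $r+1 \le n$, so $H_{r+1}(s)$ sits inside $H_n(s)$ as a principal submatrix and inherits the PSD property from any SDP-feasible $s$. In the displayed block decomposition, the $r \times r$ top-left block involves only the fixed entries $s_0 = n, s_1 = \gamma_1, \ldots, s_{2r-2} = \gamma_{2r-2}$ and so equals $H_r(\gamma)$; similarly the border $u_r(\gamma) = (\gamma_r,\ldots,\gamma_{2r-1})^T$ is fixed. Only the bottom-right entry, the objective variable $s_{2r}$, is free. Applying the Schur complement to $H_{r+1}(s) \succeq 0$ therefore delivers a closed-form lower bound on $s_{2r}$ purely in terms of the data $\gamma$, and combining with part~(a) transfers this to the desired lower bound on $\min \p_{nmq}$.

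The main obstacle will be the degenerate case where $H_r(\gamma)$ is singular, which occurs precisely when the optimal configuration has fewer than $r$ distinct coordinates and the Hankel moment matrix drops rank. This is handled by invoking the Schur complement in its generalized form with a Moore--Penrose pseudoinverse, and by noting that $H_{r+1}(s) \succeq 0$ automatically forces $u_r(\gamma)$ to lie in the column range of $H_r(\gamma)$, so the resulting quadratic form in $\gamma$ is well-defined. All remaining steps are routine verification of SDP feasibility and the standard Schur-complement characterization of positive semidefiniteness for symmetric $2 \times 2$ block matrices.
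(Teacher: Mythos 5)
Your proposal matches the paper's argument closely: part~(a) is the same embedding of power-sum vectors into the feasible region of the SDP \eqref{relax-sdp} via the Procesi--Schwarz orbit-space picture (the paper cites \eqref{eq:optinorbitspace} directly, and your explicit quadratic-form identity $v^T H_n(s) v = \sum_{k}(\sum_i v_i x_k^{i-1})^2$ is an elementary confirmation of the same fact), and part~(b) is the identical principal-submatrix plus Schur-complement argument applied to $H_{r+1}(s)$ inside $H_n(s)$. Your extra care with the degenerate case via the Moore--Penrose pseudoinverse and the range inclusion $u_r(\gamma)\in\mathrm{range}\,H_r(\gamma)$ is a sound completion of a step the paper leaves implicit; note also that the Schur complement you invoke actually yields the bound $s_{2r}\ge u_r(\gamma)^T H_r(\gamma)^{+} u_r(\gamma)$, so the displayed statement without the (pseudo)inverse appears to be a typographical slip in the source.
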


\begin{proof}
(a) Consider the equivalent formulation to~\eqref{eq:simplexex} in the form \eqref{eq:optinorbitspace}.  Since $J(z)$ is a Hankel matrix, every solution to this PMI is feasible for \eqref{relax-sdp}.

(b) In case $q=m=2r<2n$, we observe $r < n$ and
\[  H_n(s)\ = \   
    \left( \begin{array}{c|c}
   H_{r+1}(s) &U(s) \\ \hline \\ [-2ex]
   U^T(s) & V(s)
 \end{array} \right),\]
 for some suitable
 (possibly empty) matrices $U(s)\in\R^{(r+1)\times (n-r-1)}$,
 $V(s)\in \R^{(n-r-1)\times (n-r-1)}$. Therefore, $H_n(s)\succeq0$ implies $H_{r+1}(s)\succeq0$,
 and the final result follows from Schur's complement applied
 to the Hankel matrix $H_{r+1}(s)$.
\end{proof}

In certain cases, we can complement this lower bound for problem~\eqref{eq:opt1} by
an upper bound. The idea is to consider potential solutions $x\in\R^n$ of
$\p_{nmq}$ with at most $m$ non-zero components.

Consider the monic polynomial $p =X^m + \sum_{k = 0}^{m-1} p_j X^j \in \R[X]$,
and let $x_1, \ldots, x_m$ be the $m$ roots (counting multiplicities)
of $p$.
A necessary and sufficient
condition for all roots of $p$ to be real is that $H_m(s)\succeq0$, where
$H_m(s)$ is the Hankel matrix with $s_0=m$.

When $q\leq 2m-2$, we investigate the following SDP problem
\begin{equation}
\label{newrelax-sdp}
\u_{nmq}\,=\,\min_{s}\:\{\:s_q \; \vert \; H_m(s)\succeq0\,;\:
s_0=m\,;\:s_j=\gamma_j,\quad j=1,\ldots,m-1\},
\end{equation}
which the same as (\ref{relax-sdp}) except that we now have a
Hankel matrix $H_m(s)$ of dimension $m$ instead of $H_n(s)$ of dimension $n$.

It is well known that the Newton sums
$s_k \ = \ \sum_{j=1}^m X_j^k$, $k \ge 0$,
of $p$ are known polynomials in its coefficients $\{p_j\}$, and
conversely, the coefficients $p_j$ of $p$ are polynomials in the
$s_j$'s. i.e., we can write $s_j \ = \ P_j(p_0, \ldots, p_{m-1}) $, $j \ge 0$
for some polynomials $P_j\in\R[p_0,\ldots,p_{m-1}]$. 
In fact, the $s_i$'s and the $p_j$'s are related by Newton's identities,
 \begin{eqnarray*}
   s_k + p_{m-1} s_{k-1} + \cdots + p_0 s_{k-m} & = & 0 \qquad (k \geq m) \, , \\
   s_k + p_{m-1} s_{k-1} + \cdots + p_{m-k+1} s_1 & = & - k p_{m-k}
   \qquad ( 1 \le k < m) \, .
 \end{eqnarray*}

If one knows $s_j$ for all $j=1,\ldots, m-1$,
then one may compute the $p_j$'s for all
$j=1,\ldots, m-1$, and therefore, we can choose as unknown of our problem the variable
$p_0$ (the only (constant) coefficient of $p$ that we do not know), and
write
\[
 s_j \ = \ P_j(p_0, \ldots, p_{m-1}) \ = \ Q_j(p_0) \, , \qquad
 j = m, m+1, \ldots
\]
for some known polynomials $Q_j\in\R[p_0]$. We claim that $Q_j$ is
affine whenever $j \le 2m-1$.
Indeed, this follows from
\begin{eqnarray*}
 s_m & = & - s_0 p_0 - s_1 p_1 - \cdots - s_{m-1} p_{m-1} \, , \\
 s_{m+1} & = & - s_1 p_0 - \cdots - s_{m-1} p_{m-2} - s_{m} p_{m-1} \\
       & = & -s_1 p_0 - \cdots - s_{m-1} p_{m-2} +
       p_{m-1}(s_0 p_0 + s_1 p_1 + \cdots + s_{m-1} p_{m-1} ) \\
       & = & - p_0 (s_1 - s_0 p_{m-1}) - p_1 (s_2 - p_{m-1} s_1) - \cdots
           - p_{m-1} (s_m - p_{m-1} s_{m-1}) \, , \\
 s_{m+2} & = & - s_2 p_0 - \cdots - s_{m-1} p_{m-3} - s_m p_{m-2} - s_{m+1} p_{m-1} \\
        & = & - p_0 (s_2 - p_{m-2} s_0 + p_{m-1} s_1 - s_0 p^2_{m-1}) - \cdots \, , \\
 s_{m+3} & = & - p_0 (s_3 - s_0 p_{m-3} + \cdots) - \cdots
\end{eqnarray*}

Therefore, with $q\leq 2m-2$, the SDP problem (\ref{newrelax-sdp}) reads
\begin{equation}
\label{newupperbound}
\u_{nmq}\,=\,  \min_{p_0}\: \{ \: Q_q(p_0) \, : \, H_{m}(s) \succeq 0 \} \, ,
\end{equation}
where $s_0=m$ and all the entries $s_j$ of $H_{m}(s)$ are replaced by their affine expression
$Q_j(p_0)$ whenever $m \le j \le 2m-2$.
This is an SDP with the single variable $p_0$ only.
\begin{thm}
\label{upper}
Let $n,m,q\in\N$ with $m\leq n$ and $q\leq 2m-2$. Let
$\p_{nmq}$ be as in (\ref{eq:simplexex}) and let
$\u_{nmq}$ be as in (\ref{newupperbound}). Then $\min \p_{nmq}\,\leq\,\u_{nmq}$.

In addition, if $\p_{nmq}$ has an optimal solution $x^*\in\R^n$ with at most
$m$ non-zero entries, then $\min\p_{nmq}=\u_{nmq}$ and so
$\p_{nmq}$ has the equivalent convex formulation (\ref{newupperbound}).
\end{thm}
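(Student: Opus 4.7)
The plan is to use the classical Hermite/Sylvester criterion: a real monic polynomial $p(X)=X^m+p_{m-1}X^{m-1}+\cdots+p_0$ has all its $m$ roots real (counted with multiplicity) if and only if the Hankel matrix $H_m(s)$ built from the Newton sums $s_0=m,s_1,\ldots,s_{2m-2}$ of $p$ is positive semidefinite. Combined with Newton's identities, this will provide a round-trip between feasible points $p_0$ of the SDP~\eqref{newupperbound} and feasible vectors $x\in\R^n$ of $\p_{nmq}$ with at most $m$ nonzero components.

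For the inequality $\min\p_{nmq}\le\u_{nmq}$, I would start from an optimal (or approximately optimal) $p_0^*$ of \eqref{newupperbound}. By the discussion preceding \eqref{newupperbound}, the specifications $s_0=m$, $s_j=\gamma_j$ for $1\le j\le m-1$ determine $p_{m-1},\ldots,p_1$ uniquely through Newton's identities, and $p_0=p_0^*$ then fixes a monic polynomial $p\in\R[X]$ of degree $m$. Feasibility $H_m(s)\succeq 0$ forces all $m$ roots $x_1,\ldots,x_m$ of $p$ to be real. Padding with $n-m$ zeros yields $x^*=(x_1,\ldots,x_m,0,\ldots,0)\in\R^n$; its power sums coincide with those of $p$, hence $\sum_i(x^*_i)^j=\gamma_j$ for $1\le j\le m-1$, so $x^*$ is $\p_{nmq}$-feasible. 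Its objective value equals $s_q$, which by construction equals $Q_q(p_0^*)=\u_{nmq}$.

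For the equality claim, assume an optimal $x^*\in\R^n$ with at most $m$ nonzero entries exists. After permutation write its nonzero components as $x_1,\ldots,x_k$ with $k\le m$ and form the monic polynomial $p(X)=X^{m-k}\prod_{i=1}^k(X-x_i)\in\R[X]$. By construction $p$ has only real roots, so its Newton sums $s$ satisfy $H_m(s)\succeq 0$; moreover $s_j=\sum_i (x^*_i)^j=\gamma_j$ for $1\le j\le m-1$ (the added zero roots contributing nothing), so the constant term $p_0$ of this $p$ is feasible for \eqref{newupperbound}. Its objective $Q_q(p_0)=s_q=\sum_i(x^*_i)^q=\min\p_{nmq}$, giving $\u_{nmq}\le\min\p_{nmq}$, and the reverse inequality from the first part closes the loop.

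The only nontrivial ingredient is the Hermite/Sylvester reality criterion for $H_m(s)$; once that is invoked, everything else is bookkeeping via Newton's identities and the observation that adding zero components to $x\in\R^n$ neither disturbs the affine constraints nor changes the objective. The reason $q\le 2m-2$ enters is precisely to ensure that $s_q$ is one of the entries appearing in $H_m(s)$ (equivalently, that $Q_q$ is an affine function of $p_0$), so that \eqref{newupperbound} is genuinely the SDP in one variable announced above.
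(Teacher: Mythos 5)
Your proposal is correct and follows essentially the same approach as the paper: invoke the Hermite criterion ($H_m(s)\succeq 0$ iff the monic degree-$m$ polynomial with those Newton sums has only real roots), use Newton's identities to pass between $p_0$ and the power sums, and pad with zeros to move between $\R^m$ and $\R^n$. Your sketch is in fact slightly more complete than the paper's, since you spell out the converse construction (from an optimal $x^*$ with at most $m$ nonzero entries to a feasible $p_0$) that justifies the equality claim, which the paper leaves implicit.
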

\begin{proof}
Let $p_0$ be an optimal solution of the SDP (\ref{newupperbound}), and consider
the monic polynomial $p\in\R[X]$ of degree $m$
which satisfies the Newton identities with
$s_j=\gamma_j$, $j=1,\ldots,m-1$. The vector $x=(x_1,\ldots,x_m)$ of all its roots
(counting multiplicities) is real because $H_m(s)\succeq0$, i.e.,
its Hankel matrix $H_m(s)$ formed with its Newton sums
$s_j$, $j=1,\ldots,2m-2$ (and $s_0=m$), is positive semidefinite. Let
$x^*=(x,0,\ldots, 0)\in\R^n$. By definition of the Newton sums of $p$, one has
$\sum_{i=1}^n (x^*_i)^k\,=\,\sum_{i=1}^m x^k_i\,=\,\gamma_k$, $k=1,\ldots, m-1$,
which shows that $x^*$ is feasible for $\p_{nmq}$. Therefore,
$\u_{nmq}=s_q\geq\min\p_{nmq}$, the desired result.
\end{proof}

\medskip

\noindent
{\bf Acknowledgement.} We are very grateful for the comments of two 
anonymous referees which helped us a lot to improve the presentation.


\begin{thebibliography}{99}
\bibitem{babai-ronyai-90}
 L.~Babai and L.~R\'{o}nyai.
 Computing irreducible representations of finite groups.
\emph{Math. Comp.} 55:705--722, 1990.
\bibitem{bgsv-2010}
 C.~Bachoc, D.C.\ Gijswijt, A.~Schrijver, F.~Vallentin.
 Invariant semidefinite programs. In \emph{Handbook on 
 Semidefinite, Conic and Polynomial Optimization} 
 (M.F.\ Anjos, J.B.\ Lasserre, eds.), 219--270,
 Springer, New York, 2012.
\bibitem{barvinok-vershik-89}
 A.I.\ Barvinok, A.M.\ Vershik.
 The representation theory methods in
 combinatorial optimization problems.
 \emph{Soviet J. Comput. Systems Sci.} 27:1--7, 1989.
\bibitem{barvinok-92}
 A.I.~Barvinok.
Combinatorial complexity of orbits
in representations of the symmetric group.
In \emph{Representation Theory and Dynamical Systems},
Adv.\ Soviet Math. 9, 161--182, 
Amer.\ Math.\ Soc.,
Providence, RI, 1992.
\bibitem{brandenberg-theobald-2006}
 R.~Brandenberg, T.~Theobald.
 Radii minimal projections of polytopes and constrained optimization
 of symmetric polynomials.
 \emph{Adv. Geom.} 6:71--83, 2006.
\bibitem{broecker}
L.~ Br\"ocker.
On symmetric semialgebraic sets and orbit spaces. In
\emph{Banach Center Publ.} 44:37--50, 1998.
\bibitem{cimpric}
J.~Cimpri\v{c}, S.~Kuhlmann, C.~Scheiderer.
Sums of squares and moment problems in equivariant situations.
\emph{Trans. Amer. Math. Soc} 361: 735--765, 2009.
\bibitem{choi}
M.~D.~Choi, T.~Y.~Lam.
Extremal positive semidefinite forms.
\emph{Math. Ann.} 231:1--18, 1978 .
\bibitem{CLS}
D.~Cox, J. Little, D. O'Shea.
{\it Ideals, Varieties, and Algorithms.}
Springer, New York, 2007.
\bibitem{dabbaghian-2005}
V.~Dabbaghian-Abdoly.
An algorithm for constructing representations of finite groups.
\emph{J. Symb.\ Comp. 39:671--688}, 2005.
\bibitem{DerksenKemper}
H.~Derksen, G.~Kemper.
\emph{Computational Invariant Theory.}
Springer, New York, 2002.
\bibitem{dixon-93}
J.D.~Dixon.
Constructing representations of finite groups. 
In \emph{Groups and Computation},
DIMACS Ser.\ Discrete Math.\ Theoret.\ Comput.\ Sci., vol.~11, 105--112, 
Amer.\ Math.\ Soc., Providence, RI, 1993.
\bibitem{dps-2006}
 E.~de Klerk, D.V.\ Pasechnik, A.~Schrijver.
 Reduction of symmetric semidefinite programs using the regular
$*$-representation.
 \emph{Math.\ Program., Ser.~B} 109:613--624, 2007.
\bibitem{fulton-harris-b91}
 W.~Fulton, J.~Harris.
 \emph{Representation Theory.}
 Graduate Texts in Mathematics, vol.~129,
 Springer, New York, 1991.
\bibitem{GAP}
 The GAP Group. \emph{GAP -- Groups, Algorithms, and Programming}.
 2008. 
 {\sf http://www.gap-system.org} 
\bibitem{gatermann-parrilo-2004}
 K.~Gatermann, P.~Parrilo.
 Symmetry groups, semidefinite programs, and sums of squares.
 \emph{J.\ Pure Appl. Algebra}, 192:95-128,
 2004.
\bibitem{gijswijt-2005}
 D.~Gijswijt.
 \emph{Matrix Algebras and Semidefinite Programming
 Techniques for Codes.}
 Ph.D.\ dissertation, Universiteit van Amsterdam, 2005.
 \bibitem{gloptipoly}
 D.~Henrion, J.B.~Lasserre.
 GloptiPoly: Global Optimization over Polynomials with Matlab and
SeDuMi.
 \emph{ACM Trans. Math. Soft.} 29:165--194, 2003.

\bibitem{henrion-lasserre-pmi}
 D.~Henrion, J.B.~Lasserre.
Convergent relaxations of polynomial matrix inequalities and static output feedback.  
\emph{IEEE Trans. Autom. Control} 51:192--202, 2006.
 \bibitem{detection}
 D.~Henrion, J.B.~Lasserre.
 Detecting global optimality and extracting solutions in GloptiPoly. In
 {\it Positive Polynomials in Control}
 (D.~Henrion, A.~Garulli, eds.), Lecture Notes on Control and
Information Sciences, 293--310,
 vol. 312, Springer-Verlag, Berlin, 2005.
 
 \bibitem{Hol-Scherer}
 C.W.J.~Hol, C.W.~Scherer. 
 Sum of squares relaxations for polynomial semidefinite programming. 
\emph{Proc. Symp. Mathematical Theory
of Networks and Systems (MTNS)}, Leuven, Belgium, 2004.
 
\bibitem{james-kerber-b81}
 G.~James, A.~Kerber.
 \emph{The Representation Theory of the Symmetric Group.}
 Addison-Wesley, Reading, MA, 1981.
\bibitem{komk-2001}
 Y.~Kanno, M.~Ohsaki, M.~Murota, N.~Katoh.
 Group symmetry in interior-point methods for
 semidefinite programming.
 \emph{Optimization and Engineering} 2:293--320, 2001.
\bibitem{lasserre-2001}
 J.B.\ Lasserre.
 Global optimization with polynomials and the problem of moments.
 \emph{SIAM J.\ Optim.} 11:796--817, 2001.

\bibitem{lassere-book}
J.B.~Lasserre.
\emph{Moments, Positive Polynomials and Their Applications.}
Imperial College Press, London, 2010.

\bibitem{laurent-2005}
M.~Laurent. Revisiting two theorems of Curto and Fialkow on
moment matrices. \emph{Proc.\ Amer.\ Math.\ Soc.} 133:2965--2976, 2005.
\bibitem{laurent-2006}
 M.~Laurent.
 Strengthened semidefinite programming bounds for codes.
\emph{Math.\ Program., Ser.~B} 109:239--261, 2007.

\bibitem{laurent-2007}
M.~Laurent.
Semidefinite representations for finite varieties.
\emph{Math.\ Program., Ser.~A} 109:1--26, 2007.
\bibitem{laurent-2009}
M.~Laurent.
Sums of squares, moment matrices and optimization over polynomials.
in \emph{Emerging Applications of Algebraic Geometry}
(M.~Putinar,  S.~Sullivant, eds.), 157--270,
IMA Volumes in Mathematics and its Applications 149, Springer, New York,
2009.

\bibitem{parrilo-2001}
 P.A.\ Parrilo.
  Semidefinite programming relaxations for semialgebraic problems.
\emph{Math.\ Program., Ser.~B} 96:293--320, 2003.
\bibitem{proschwa}
C.~Procesi, G.~Schwarz.
Inequalities defining orbit spaces.
\emph{Invent.\ Math.} 81:539--554, 1985.
\bibitem{put}
M. Putinar. Positive polynomials on compact semi-algebraic sets.
\emph{Indiana Univ.\ Math.\ J.} 42:969--984, 1993.
\bibitem{Rie}
C.~Riener.
On the degree and half degree principle for symmetric polynomials.
 \emph{J.\ Pure Appl.\ Algebra} 216:850--856, 2012.
\bibitem{sagan-2001}
B.~Sagan.
\emph{The Symmetric Group.} Graduate Texts in Mathematics, vol.~203,
 Springer, New York, 2001.
\bibitem{schweighofer-2005}
 M.~Schweighofer.
 Optimization of polynomials on compact semialgebraic sets.
 \emph{SIAM J.\ Optim.} 15:805--825, 2005.
\bibitem{Schrijver2}
A.~Schrijver.
A comparison of the Delsarte and Lov\'{a}sz bounds.
\emph{IEEE Trans. Inform.\ Theory} 25:425--429, 1979.
\bibitem{Schrijver}
 A.~Schrijver.
 New code upper bounds from the Terwilliger algebra and semidefinite programming.
\emph{IEEE Trans. Inform.\ Theory} 51:2859--2866, 2005.

\bibitem{serre-b77}
 J.-P.~Serre.
 \emph{Linear Representations of Finite Groups.}
 Graduate Texts in Mathematics, vol.~42,
 Springer, New York, 2001.
\bibitem{specht-1933}
 W.~Specht.
 Zur Darstellungstheorie der symmetrischen Gruppe.
 \emph{Math.\ Z.} 42:774--779, 1937.
\bibitem{sedumi}
 J.F.~Sturm. Using SeDuMi 1.02, a MATLAB toolbox for
 optimization over symmetric cones.
 \emph{Optimization Methods and Software} 11--12:625--653, 1998.
\bibitem{timofte-2003}
 V.~Timofte. On the positivity of symmetric polynomial functions.
 Part I: General results. \emph{J.\ Math. Anal.\ Appl.} 284:174--190, 2003.
\bibitem{vallentin-2007}
F.~Vallentin.
Symmetry in semidefinite programs.
\emph{Linear Algebra Appl.} 430:360--369, 2009.

\end{thebibliography}
\end{document}